\newtheorem{theorem}{Theorem}[section]
\newtheorem{lemma}[theorem]{Lemma}
\newtheorem{proposition}[theorem]{Proposition}
\newtheorem{corollary}[theorem]{Corollary}
\theoremstyle{definition}
\newtheorem{definition}[theorem]{Definition}
\newtheorem{example}[theorem]{Example}
\newtheorem{remark}[theorem]{Remark}
\theoremstyle{remark}
\newcommand\bovermat[2]{
	\makebox[-2pt][l]{$\smash{\overbrace{\phantom{
					\begin{bmatrix}#2\end{bmatrix}}}^{\text{#1}}}$}#2}
\newcommand{\FF}{\mathbb{F}}
\newcommand{\ZZ}{\mathbb{Z}}
\newcommand{\TT}{\mathbb{T}}
\newcommand{\GG}{\mathbb{G}}
\newcommand{\LL}{\mathbb{L}}
\newcommand{\CC}{\mathbb{C}}
\newcommand{\bA}{\mathbf{A}}
\newcommand{\bff}{\mathbf{f}}
\newcommand{\bg}{\mathbf{g}}
\newcommand{\bh}{\mathbf{h}}
\newcommand{\bn}{\mathbf{n}}
\newcommand{\bp}{\mathbf{p}}
\newcommand{\bQ}{\mathbf{Q}}
\newcommand{\bs}{\mathbf{s}}
\newcommand{\bG}{\mathbf{G}}
\newcommand{\bu}{\mathbf{u}}
\newcommand{\bv}{\mathbf{v}}
\newcommand{\bT}{\mathbf{T}}
\newcommand{\cE}{\mathcal{E}}
\newcommand{\cG}{\mathcal{G}}
\newcommand{\cN}{\mathcal{N}}
\newcommand{\rA}{\mathrm{A}}
\newcommand{\rd}{\mathrm{d}}
\newcommand{\rB}{\mathrm{B}}
\newcommand{\rF}{\mathrm{F}}
\newcommand{\rG}{\mathrm{G}}
\newcommand{\rH}{\mathrm{H}}
\newcommand{\rL}{\mathrm{L}}
\newcommand{\rP}{\mathrm{P}}
\newcommand{\rR}{\mathrm{R}}
\newcommand{\rS}{\mathrm{S}}
\newcommand{\sC}{\mathscr{C}}
\newcommand{\sH}{\mathscr{H}}
\newcommand{\sP}{\mathscr{P}}
\newcommand{\sT}{\mathscr{T}}
\newcommand{\pd}{\partial}
\newcommand{\bsalpha}{\boldsymbol{\alpha}}
\newcommand{\bsdelta}{\boldsymbol{\delta}}
\newcommand{\bseta}{\boldsymbol{\eta}}
\newcommand{\bsgamma}{\boldsymbol{\gamma}}
\newcommand{\bsomega}{\boldsymbol{\omega}}
\newcommand{\bslambda}{\boldsymbol{\lambda}}
\newcommand{\bse}{\boldsymbol{e}}
\newcommand{\bsh}{\boldsymbol{h}}
\newcommand{\bsell}{\boldsymbol{\ell}}
\newcommand{\bsn}{\boldsymbol{n}}
\newcommand{\bsx}{\boldsymbol{x}}
\newcommand{\bsy}{\boldsymbol{y}}
\newcommand{\bsz}{\boldsymbol{z}}
\DeclareMathOperator{\Der}{Der}
\DeclareMathOperator{\Ext}{Ext}
\DeclareMathOperator{\Ker}{Ker}
\DeclareMathOperator{\GL}{GL}
\DeclareMathOperator{\Mat}{Mat}
\DeclareMathOperator{\Cent}{Cent}
\DeclareMathOperator{\End}{End}
\DeclareMathOperator{\tens}{tens}
\DeclareMathOperator{\Hom}{Hom}
\DeclareMathOperator{\Rep}{Rep}
\DeclareMathOperator{\Res}{Res}
\DeclareMathOperator{\Exp}{Exp}
\DeclareMathOperator{\Id}{Id}
\DeclareMathOperator{\im}{Im}
\DeclareMathOperator{\trdeg}{tr.deg}
\DeclareMathOperator{\Lie}{Lie}
\newcommand{\ok}{\overline{k}}
\newcommand{\oK}{\mkern2.5mu\overline{\mkern-2.5mu K}}
\newcommand{\tr}{\mathrm{tr}}
\newcommand{\power}[2]{{#1 [[ #2 ]]}}
\newcommand{\norm}[1]{\lvert #1 \rvert}
\newcommand{\dnorm}[1]{\lVert #1 \rVert}
\newcommand{\inorm}[1]{{\lvert #1 \rvert}}
\begin{document}
	
	\title[Logarithms of Anderson $t$-modules]{On the algebraic independence of 
logarithms of Anderson $t$-modules}
	
\author{O\u{g}uz Gezm\.{i}\c{s}}
\address{Department of Mathematics, National Tsing Hua University, Hsinchu City 30042, Taiwan R.O.C.}
\email{gezmis@math.nthu.edu.tw}

	\author{Changningphaabi Namoijam}
	\address{Department of Mathematics, Colby College, 5830 Mayflower Hill, Waterville, Maine 04901, U.S.A}
	\email{cnamoijam@gmail.com}
	

	\subjclass[2020]{Primary 11G09, 11J93}
	
	\date{\today}

	\keywords{Drinfeld modules, $t$-modules, transcendence, algebraic independence}
	\begin{abstract} 
 In the present paper, we determine the algebraic relations among the tractable coordinates of logarithms of Anderson $t$-modules constructed by taking the tensor product of Drinfeld modules of rank $r$ defined over the algebraic closure of the rational function field and their $(r-1)$-st exterior powers with the Carlitz tensor powers. Our results, in the case of the tensor powers of the Carlitz module, generalize the work of Chang and Yu on the algebraic independence of polylogarithms.
	\end{abstract}

	\maketitle


\section{Introduction}

Let $\mathbb{F}_q$ be the finite field with $q$ elements where $q$ is a positive power of a prime $p$. We let $\theta$ be a variable over $\mathbb{F}_q$ and set  $A:=\mathbb{F}_q[\theta]$. Let $K:=\mathbb{F}_q(\theta)$ be the fraction field of $A$ and $K_{\infty}:=\mathbb{F}_q((1/\theta))$ be the completion of $K$ corresponding to the norm $\norm{\cdot}$ at the infinite place of $K$ normalized so that $\inorm{\theta}=q$. We further set $\mathbb{C}_{\infty}$ to be the completion of a fixed algebraic closure of $K_{\infty}$ and $\overline{K}$ to be the  algebraic closure of $K$ in $\mathbb{C}_{\infty}$. 

Let $L$ be a field so that $\overline{K}\subseteq L \subseteq \mathbb{C}_{\infty}$. For any matrix  $B=(B_{ik})\in \Mat_{m\times \ell}(L)$ and $j\in \mathbb{Z}_{\geq 0}$, set $B^{(j)}:=(B_{ik}^{q^{j}})\in  \Mat_{m\times \ell}(L)$. For $m,\ell\in \ZZ_{\geq 1}$, we  let  $\Mat_{m\times \ell}(L)[\tau]$ be the set of polynomials of $\tau$ with coefficients in $\Mat_{m\times \ell}(L)$. When $m=\ell$, we define the ring $\Mat_{m}(L)[\tau]$ of polynomials in $\tau$ with coefficients in $\Mat_{m}(L)$ subject to the condition $\tau B=B^{(1)}\tau$ for each $B\in \Mat_{m}(L)$.

Let $t$ be a variable over $\mathbb{C}_{\infty}$ and set $\bA:=\mathbb{F}_q[t]$. In his influential work \cite{Dri74}, Drinfeld introduced  \textit{elliptic modules}, now called Drinfeld modules, to provide a proof for some special cases of the Langlands correspondence for $\GL_2$ over global function fields. In our context, \textit{a Drinfeld $\bA$-module (over $\oK$)} is a tuple $G_0:=(\GG_{a/\oK}, \phi)$ consisting of the additive group scheme $\GG_{a/\oK}$ over $\oK$ and an $\mathbb{F}_q$-algebra homomorphism $\phi:\bA\to \overline{K}[\tau]$ defined by 
\begin{equation}\label{E:DrinfeldDef}
\phi(t):=\theta + a_1\tau +\dots +a_r\tau^r\in \overline{K}[\tau], \ \ a_r\neq 0.
\end{equation}

Later, Anderson \cite{And86} extended the notion of Drinfeld modules to higher dimensional objects, called \textit{$t$-modules (over $L$) of dimension $s$}, which are  tuples $G=(\mathbb{G}_{a/L}^s, \varphi)$ of the $s$-copies of the additive group scheme $\mathbb{G}_{a/L}$ over $L$ and an $\mathbb{F}_q$-algebra homomorphism $\varphi:\bA\to \Mat_s(L)[\tau]$ given by 
\[
\varphi(t)=A_0+A_1\tau+\dots+A_m\tau^m
\]
for some $m\in \ZZ_{\geq 0}$ so that $\rd_{\varphi}(t):=\rd \varphi(t)=A_0=\theta \Id_s+N$ where $\Id_s$ is the $s\times s$ identity matrix and $N$ is a nilpotent matrix. We set $\Lie(G)(L):=\Mat_{s\times 1}(L)$ and equip it with the $\bA$-module structure given by 
\[
t \cdot x:=\rd_{\varphi}(t)x=A_0x, \ \ x\in \Lie(G)(L).
\]
Furthermore, we define  $G(L):=\Mat_{s\times 1}(L)$ whose $\bA$-module structure is given by 
\[
t\cdot x:=\varphi(t)x=A_0x+A_1x^{(1)}+\dots+A_mx^{(m)}, \, \, x\in G(L).
\]	
For any $t$-module $G$, there exists a unique everywhere convergent $\mathbb{F}_q$-linear homomorphism $\Exp_G:\Lie(G)(\CC_{\infty})\to G(\CC_{\infty})$, \textit{the exponential function of $G$}, given by 
$\Exp_G(x)=\sum_{i\geq 0}\alpha_ix^{(i)}$ for $x\in \Lie(G)(\CC_{\infty})$ so that it has the property that $\alpha_0=\Id_s$ and
\begin{equation}\label{E:FuncEqtn}
\Exp_G(\rd_{\varphi}(t)x)=\varphi(t)\Exp_G(x).
\end{equation}
We set $\Lambda_{G}:=\Ker(\Exp_{G})$ which is an $\bA$-module, and call a non-zero element of $\Lambda_{G}$ \textit{a period of $G$}.

In what follows, we  briefly mention the formalism of the de Rham cohomology for $t$-modules. The theory of biderivations, quasi-periodic functions, and the de Rham module for Drinfeld modules were developed by Anderson, Deligne, Gekeler, and Yu (see \cite{Gekeler89, Yu90}). Later, the theory were extended to general $t$-modules by Brownawell and Papanikolas (see \cite{BP02}). More precisely, Brownawell and Papanikolas defined \textit{a $\varphi$-biderivation of $G$ (over $L$)} as a map $\delta:\bA \rightarrow \Mat_{1\times s}(L[\tau]\tau)$ satisfying
\[
\delta(ab) = a(\theta)\delta(b) + \delta(a)\varphi(b)
\]
for each $a,b\in \bA$ and then introduced the de Rham module $\mathrm{H}_{\mathrm{DR}}(G)$ as a certain quotient space of biderivations (see \S\ref{SS:bids} for its explicit definition). For the setting of $t$-modules, \textit{the quasi-periodic function of $G$ associated to $\delta$} is defined as  the unique everywhere convergent function $F_{G,\delta}:\Lie(G)(\mathbb{C}_{\infty})\to \mathbb{C}_{\infty}$ satisfying
\[
	F_{G,\delta}(\rd_{\varphi}(a)\bsz) = a(\theta)F_{G,\delta}(\bsz) + \delta(a)(\Exp_{G}(\bsz))
\]
for each $\bsz \in \Lie(G)(\mathbb{C}_{\infty})$. Furthermore, for each $\bsy \in \Lie(G)(\CC_\infty)$, we call $F_{G, \delta}(\bsy)$ \textit{a quasi-logarithm}.  We refer the reader to \S\ref{SS:bids} for further details on these objects.

Among all interesting examples of $t$-modules (see for example \cite{AndThak90} and \cite{NPapanikolas21}), in this paper, we focus on two families of $t$-modules $\mathcal{E}_n=(\mathbb{G}^{rn+r-1}_{a/\overline{K}}, \varphi_n)$ and $G_n=(\mathbb{G}_{a/\oK}^{rn+1},\phi_n)$ of dimension $rn+r-1$ and $rn+1$ respectively (see \S\ref{S:tmoduleEn} and \S\ref{S:tmoduleGn} for their explicit definitions). The significance of these families rely on the fact that, under the equivalence between the categories of $t$-modules and $\bA$-finite dual $t$-motives (see \S\ref{SS:t-motives} for more details), $\mathcal{E}_n$ corresponds to the tensor product of $(r-1)$-st exterior power of the dual $t$-motive of the Drinfeld $\bA$-module $G_0$ with the $n$-th Carlitz tensor power (see \eqref{E:exterisom}), and $G_n$ corresponds to the tensor product of the dual $t$-motive of $G_0$ with the $n$-th Carlitz tensor power (see \eqref{E:tensisom}). We refer the reader to \S\ref{S:tmoduleEn} and \S\ref{S:tmoduleGn} for explicit descriptions of these relations. We inform the reader that in an upcoming work \cite{GN24}, the $t$-modules $G_n$ and $\mathcal{E}_n$ as well as the main results of the present paper are used to prove the transcendence of the special values of Goss $L$-functions of Drinfeld $\bA$-modules defined over $K$ at positive integers. Their link to such values are indeed the main motivation for the authors to focus on the arithmetic aspects of the $t$-modules we introduce here.

Our main goal in this paper is to study the algebraic independence of the \emph{tractable} coordinates of logarithms, in the sense of \cite[Def.~3.3.1]{CM21}, of the $t$-modules $\mathcal{E}_n$ and $G_n$. Here, all $r-1$ coordinates of a logarithm of $\mathcal{E}_0$ are tractable and for $n \geq 1$, the tractable coordinates of a logarithm of $\mathcal{E}_n$ are defined as the bottom $r$-coordinates. Similarly, for $n \geq 1$, the bottom $r$-coordinates of a logarithm of $G_n$ are the tractable coordinates.

Let $K_n$ be the fraction field of the ring of endomorphisms of $\mathcal{E}_n$ (see Remark~\ref{R:isomorphismsofvss} and \S\ref{S:GGofEn}). The first result of the present paper can be stated as follows.

\begin{theorem}\label{T:1}
    	For $n \geq 0$ and each $1 \leq \ell \leq m$, let $\bsy_{\ell} =[y_{\ell,1}, \dots, y_{\ell,rn+r-1}]^{\tr} \in \Lie(\mathcal{E}_n)(\CC_{\infty})$ be such that $\Exp_{\mathcal{E}_n}(\bsy_{\ell})\in \mathcal{E}_n(\oK)$. Suppose that the set  $\{\bsy_{1}, \dots, \bsy_{m}\}$ is linearly independent over $K_{n}$. Then the following assertions hold:
	\begin{itemize}
		\item[(i)] Let  $F_{\mathcal{E}_0,\delta}$ be the quasi-periodic function associated to the $\varphi_0$-biderivation $\delta$ which maps $t\mapsto (\tau, 0,\dots,0)$. If $n=0$, then 
		\begin{equation}\label{E:T1i}
  \bigcup\limits_{\ell=1}^{m} \{F_{\mathcal{E}_0,\delta}(\bsy_{\ell}), y_{\ell, 2}, \dots, y_{\ell, r-1}\}
  \end{equation}
  is algebraically independent over $\oK$. In particular,  for some non-constant polynomial $\bff \in \oK[X_1,\dots,X_{(r-1)m}]$, if 
		$
		\bff(F_{\mathcal{E}_0,\delta}(\bsy_1), y_{1,2},\dots, y_{1, r-1}, \dots,  F_{\mathcal{E}_0,\delta}(\bsy_m), y_{m,2},\dots, y_{m, r-1})
		$ is non-zero, then it is transcendental over $\oK$.
  
		\item[(ii)] If $n\in \ZZ_{\geq 1}$, then 
		\begin{equation}\label{E:T1ii}
  \bigcup\limits_{\ell=1}^{m} \{y_{\ell, rn}, y_{\ell, rn+1}, \dots, y_{\ell, rn+r-1}\}  
  	\end{equation}
 is algebraically independent over $\oK$. In particular, for some non-constant polynomial $\bff \in \oK[X_1,\dots,X_{rm}]$, if $\bff(y_{1, rn}, \dots, y_{1, rn+(r-1)}, \dots, y_{m, rn}, \dots, y_{m, rn+(r-1)})$ is non-zero, then it is transcendental over $\oK$.
	\end{itemize} 
\end{theorem}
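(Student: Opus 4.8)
The plan is to translate Theorem~\ref{T:1} into the computation of a $t$-motivic Galois group in the sense of Papanikolas and then to read off the algebraic independence of the tractable coordinates from a dimension count. First I would pass to the category of $\bA$-finite dual $t$-motives via the equivalence recalled in \S\ref{SS:t-motives}: there $\cE_n$ corresponds to $\cN:=\wedge^{r-1}M_{G_0}\otimes C^{\otimes n}$ (and $G_n$ to $M_{G_0}\otimes C^{\otimes n}$), a dual $t$-motive of rank $r$ over $\FF_q(t)$. For each $\ell$ the hypothesis $\Exp_{\cE_n}(\bsy_\ell)\in\cE_n(\oK)$ produces, through Anderson generating functions, an extension $0\to\cN\to X_\ell\to\mathbf 1\to0$ of dual $t$-motives carrying a rigid analytic trivialization $\Psi_\ell$ such that the tractable coordinates of $\bsy_\ell$ occur, up to algebraic multiples, among the entries of $\Psi_\ell(\theta)$, while the remaining coordinates $y_{\ell,i}$ are---by the defining property of tractable coordinates \cite[Def.~3.3.1]{CM21} and the explicit shape of $\varphi_n$ from \S\ref{S:tmoduleEn}---$\oK$-linear combinations of those entries and of the periods and quasi-periods of $\cN$. (When $n=0$ one additionally adjoins the biderivation $\delta$ and works inside the de Rham realization $\rH_{\mathrm{DR}}(\cE_0)$, so that $F_{\cE_0,\delta}(\bsy_\ell)$ likewise appears among the period entries.) Taking the fibre product $X:=X_1\times_{\mathbf 1}\cdots\times_{\mathbf 1}X_m$ and letting $\Gamma_X$ be the Galois group of the Tannakian subcategory generated by $X$, Papanikolas's theorem identifies $\trdeg_{\oK}\oK\bigl(\text{entries of a period matrix of }X\bigr)$ with $\dim\Gamma_X$.

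Next I would analyze $\Gamma_X$ through the exact sequence $1\to W\to\Gamma_X\to\Gamma_\cN\to1$, where $\Gamma_\cN$ is the Galois group attached to $\cN$ and $W$ is a $\Gamma_\cN$-stable subgroup of the vector group $V^{\oplus m}$, with $V$ the standard representation of $\Gamma_\cN$ on the ``$\cN$-part''. The group $\Gamma_\cN$ is determined separately---see \S\ref{S:GGofEn} and Remark~\ref{R:isomorphismsofvss}---from the Galois groups of $M_{G_0}$ and of the Carlitz tensor powers together with the endomorphism field $K_n$, and its dimension is exactly the transcendence degree contributed by the periods and quasi-periods of $\cE_n$ already present as subquotient data of $X$. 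The decisive point is that $W$ is as large as possible, i.e.\ $W=V^{\oplus m}$. Granting this, $\dim\Gamma_X=\dim\Gamma_\cN+m\dim V$, so the extension contributes $m\dim V$ to the transcendence degree over the field generated by the periods of $\cN$; since the listed quantities in \eqref{E:T1ii} (resp.\ in \eqref{E:T1i}, using the de Rham enlargement) generate the extension part over that field and are $m\dim V$ in number, they form a transcendence basis, hence are algebraically independent over $\oK(\text{periods of }\cN)\supseteq\oK$, which is Theorem~\ref{T:1}.

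The maximality $W=V^{\oplus m}$ is the step I expect to be the main obstacle, and it is exactly where the hypothesis that $\{\bsy_1,\dots,\bsy_m\}$ is linearly independent over $K_n$ enters. A proper $\Gamma_\cN$-submodule $W\subsetneq V^{\oplus m}$ would, by the Tannakian dictionary, correspond to a sub-dual-$t$-motive of $X$ surjecting onto $\mathbf 1$, hence---after projecting to the appropriate $\End(\cN)$-isotypic component---to a nonzero $\oK$-morphism witnessing a nontrivial $\End(\cN)$-linear relation among the extension classes $[X_\ell]\in\Ext^1(\mathbf 1,\cN)$; pulling this relation back through the logarithm/Anderson-generating-function dictionary and using $\End(\cE_n)\cong\End(\cN)$ one obtains a nonzero $(c_1,\dots,c_m)\in K_n^m$ for which $\sum_\ell c_\ell\bsy_\ell$ becomes $\oK$-rational after applying a nonzero endomorphism, contradicting the assumed $K_n$-linear independence (the base case---transcendence of a single nondegenerate logarithm coordinate and of the periods of $\cE_n$---is Yu's sub-$t$-module theorem). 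Making this rigorous requires (a) an explicit description of $\Ext^1(\mathbf 1,\cN)$ as a module over $\End(\cN)\cong\cO_{K_n}$ matching its elements with logarithms; (b) an Anderson--Brownawell--Papanikolas type linear independence input guaranteeing that the Frobenius difference equations defining the $X_\ell$ have no solutions beyond those forced by $\Gamma_\cN$, so that $\Psi(\theta)$ carries no hidden algebraic relations; and (c) for $n=0$, verifying that the biderivation $\delta$ enlarges the motive correctly, so that $F_{\cE_0,\delta}(\bsy_\ell)$ genuinely occurs as a period entry and the count in \eqref{E:T1i} is attained. The family $G_n$ is treated identically, with $\cN$ replaced by $M_{G_0}\otimes C^{\otimes n}$ and $K_n$ by the fraction field of its endomorphism ring.
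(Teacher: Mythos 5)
Your overall architecture is the same as the paper's: build extensions of $\mathbf{1}$ by $H_n$ from the logarithms via Anderson generating functions, form the pullback $t$-motive, use Papanikolas's theorem and the sequence $1\to W\to\Gamma_{\mathbf{X}}\to\Gamma_{H_n}\to 1$, and force $W$ to be the full vector group by ruling out nontrivial $\End$-linear relations among the classes in $\Ext^1_{\sT}(\mathbf{1},H_n)$. However, there is a genuine gap at precisely the step you flag as decisive. If $e_{1*}Y_1+\dots+e_{m*}Y_m$ is a trivial class, what one extracts (as in the paper's Theorem~\ref{T:Trivial2}) is a relation of the form $\sum_{\ell}\rd\rP_{\ell}\bsy_{\ell}=-\sum_i \rd\rP_{f_i}\bsomega_i$, i.e.\ a nonzero $K_n$-combination of the $\bsy_\ell$ lying in the $K_n$-span of the period lattice of $\cE_n$ --- not a relation among the $\bsy_\ell$ alone, and not ``$\oK$-rationality''. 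This does not contradict the hypothesis of Theorem~\ref{T:1}, which only assumes $\{\bsy_1,\dots,\bsy_m\}$ is $K_n$-linearly independent. Indeed your intermediate claim $W=V^{\oplus m}$ (hence $\dim\Gamma_{\mathbf X}=\dim\Gamma_{H_n}+rm$) is false under that hypothesis: take $m=1$ and $\bsy_1=\bsomega_1$ a period (allowed, since $\Exp_{\cE_n}(\bsomega_1)=0\in\cE_n(\oK)$ and a single nonzero vector is $K_n$-independent); then $\bh_{\bsalpha_1}=0$, the extension splits, $W$ is trivial, and $\dim\Gamma_{\mathbf X}=r^2/\bs$, yet the conclusion of Theorem~\ref{T:1} still holds for other reasons.

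What is missing is the two-stage structure of the paper's argument. First, the Galois-group computation (Theorems~\ref{T:Trivial2} and~\ref{T:MaintTtr2}) is proved under the \emph{stronger} hypothesis that $\{\bsomega_1,\dots,\bsomega_{r/\bs},\bsy_1,\dots,\bsy_m\}$ is $K_n$-linearly independent, where $\{\bsomega_1,\dots,\bsomega_{r/\bs}\}$ is a maximal $K_n$-independent subset of an $\bA$-basis of $\Lambda_{\cE_n}$; only then does the displayed relation produce a contradiction. Second, Theorem~\ref{T:1} is deduced from this by choosing a maximal $K_n$-linearly independent subset $\{\bseta_1,\dots,\bseta_j\}\subseteq\{\bsomega_1,\dots,\bsomega_r,\bsy_1,\dots,\bsy_m\}$ containing all the $\bsy_\ell$, and checking --- using the explicit block upper-triangular form of $\rd\rP$ for $\rP\in\End(\cE_n)$ in \eqref{E:dEnd2} and \eqref{E:W2}, which shows endomorphisms act on the tractable coordinates through the bottom block --- that the field generated by the tractable coordinates (and, for $n=0$, the quasi-logarithms $F_{\cE_0,\delta}$) of all periods and all $\bsy_\ell$ coincides with the one generated by those of the $\bseta_u$; algebraic independence of the $\bsy$-part then follows as a subset of an algebraically independent set. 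Your proposal conflates these two stages, so as written the maximality of $W$ is unproved and the hypothesis is invoked where it does not suffice; items (a)--(c) you list are indeed needed (and are supplied by \S\ref{S:EndEn}, \S\ref{SS:gahaextn}--\ref{SS:tmotiveconsextn} of the paper), but the reduction step above is the essential missing ingredient rather than a technical afterthought. (Minor: the base input is Chang--Papanikolas's computation of $\Gamma_{H_\phi}$ together with Papanikolas's theorem, not Yu's sub-$t$-module theorem.)
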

We remark that the de Rham module $\mathrm{H}_{\mathrm{DR}}(\mathcal{E}_n)$ of $\mathcal{E}_n$ is an $r$-dimensional $\oK$-vector space parametrizing the extensions of $\mathcal{E}_n$ by $\GG_a$. Let $\{\delta_1, \dots, \delta_r\}$ be a $\oK$-basis of $\mathrm{H}_{\mathrm{DR}}(\mathcal{E}_n)$ and $\bsy=[y_1,\dots,y_{rn+r-1}]^{\tr}\in \Lie(\mathcal{E}_n)(\mathbb{C}_{\infty})$ be such that $\Exp_{\mathcal{E}_n}(\bsy)\in \mathcal{E}_n(\oK)$. Then, the field $\oK\left(\cup_{i=1}^r \{F_{\mathcal{E}_n, \delta_i}(\bsy)\}\right)$ is independent of the choices of the basis of $\mathrm{H}_{\mathrm{DR}}(\mathcal{E}_n)$. Moreover, we indeed have (see the proof of Corollary \ref{C:Int0} in \S\ref{S:Proof1} for more for details) 
\[
\oK\left(\cup_{i=1}^r \{F_{\mathcal{E}_n, \delta_i}(\bsy)\}\right) = \begin{cases} \oK\left(F_{\mathcal{E}_0,\delta}(\bsy), y_2, \dots, y_{r-1}\right) & \text{ if }n=0\\
 \oK\left(y_{rn}, y_{rn+1}, \dots, y_{rn+r-1}\right) & \text{ otherwise.}
 \end{cases}
\]
We then have the following corollary to Theorem \ref{T:1}.

\begin{corollary}\label{C:Int0}
	For $n \geq 0$ and each $1 \leq \ell \leq m$, let $\bsy_{\ell} =[y_{\ell,1}, \dots, y_{\ell,rn+r-1}]^{\tr} \in \Lie(\mathcal{E}_n)(\CC_{\infty})$ be such that $\Exp_{\mathcal{E}_n}(\bsy_{\ell})\in \mathcal{E}_n(\oK)$. Suppose that the set  $\{\bsy_{1}, \dots, \bsy_{m}\}$ is linearly independent over $K_{n}$. Then, the set 
  \[
  \bigcup\limits_{\ell=1}^{m} \{F_{\mathcal{E}_n, \delta_1}(\bsy_\ell), \dots, F_{\mathcal{E}_n, \delta_r}(\bsy_\ell)\}
  \]
  is algebraically independent over $\oK$. 
\end{corollary}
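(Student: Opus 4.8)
The plan is to obtain Corollary~\ref{C:Int0} from Theorem~\ref{T:1} together with the discussion recorded in the remark preceding it. The mechanism is the following: the field generated over $\oK$ by the quasi-logarithms attached to a $\oK$-basis of $\mathrm{H}_{\mathrm{DR}}(\mathcal{E}_n)$ does not depend on the chosen basis; it coincides with the field generated by the algebraically independent quantities isolated in Theorem~\ref{T:1}; hence its transcendence degree is known, and the corollary comes out of a transcendence-degree count.

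The first step I would carry out is a change-of-basis formula for quasi-periodic functions. If two $\varphi_n$-biderivations $\delta$ and $\delta'$ represent the same class in $\mathrm{H}_{\mathrm{DR}}(\mathcal{E}_n)$, then $\delta'-\delta$ is strictly inner, say $(\delta'-\delta)(a)=u\varphi_n(a)-a(\theta)u$ with $u\in\Mat_{1\times(rn+r-1)}(\oK[\tau]\tau)$; using the additivity and $\oK$-linearity of the assignment $\delta\mapsto F_{\mathcal{E}_n,\delta}$, the uniqueness of quasi-periodic functions, and \eqref{E:FuncEqtn}, one checks that $F_{\mathcal{E}_n,\delta'}(\bsz)=F_{\mathcal{E}_n,\delta}(\bsz)+u\big(\Exp_{\mathcal{E}_n}(\bsz)\big)$ for every $\bsz\in\Lie(\mathcal{E}_n)(\CC_\infty)$. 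Since $\Exp_{\mathcal{E}_n}(\bsy_\ell)\in\mathcal{E}_n(\oK)$ and $u$ has entries in $\oK[\tau]$, the correction term $u(\Exp_{\mathcal{E}_n}(\bsy_\ell))$ lies in $\oK$. Consequently, passing from $\{\delta_1,\dots,\delta_r\}$ to any other $\oK$-basis of $\mathrm{H}_{\mathrm{DR}}(\mathcal{E}_n)$ changes the family $\big(F_{\mathcal{E}_n,\delta_i}(\bsy_\ell)\big)_{i,\ell}$ by an invertible matrix over $\oK$ acting on the index $i$, followed by a translation by a vector over $\oK$; in particular, both the field $\oK\big(\bigcup_\ell\{F_{\mathcal{E}_n,\delta_i}(\bsy_\ell)\}_i\big)$ and the algebraic independence over $\oK$ of that family are insensitive to the choice of basis.

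The second step is to identify this field, which is exactly the field identity stated in the remark. For this I would fix a $\oK$-basis of $\mathrm{H}_{\mathrm{DR}}(\mathcal{E}_n)$ adapted to the block structure of $\varphi_n$ from \S\ref{S:tmoduleEn} and compute the associated quasi-periodic functions by solving their defining functional equation recursively, working down from the ``Carlitz'' coordinates of $\Lie(\mathcal{E}_n)$ to the bottom $r$ ``Drinfeld'' coordinates, using at each stage that $\delta(a)\big(\Exp_{\mathcal{E}_n}(\bsy_\ell)\big)\in\oK$. Combining this with the first step, $\bigcup_\ell\{F_{\mathcal{E}_n,\delta_i}(\bsy_\ell)\}_i$ generates over $\oK$ the same field as the set displayed in Theorem~\ref{T:1}(i) or~(ii) according as $n=0$ or $n\geq 1$, which that theorem asserts is algebraically independent; since the number of these quasi-logarithms equals the transcendence degree of the field they generate, as determined by Theorem~\ref{T:1}, they are algebraically independent over $\oK$.

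I expect the second step to be the main obstacle: writing down an explicit $\oK$-basis of $\mathrm{H}_{\mathrm{DR}}(\mathcal{E}_n)$ together with usable closed formulas for the associated quasi-periodic functions, and verifying that evaluating them at the logarithm points $\bsy_\ell$ returns, up to the $\oK$-linear and translation ambiguities from the first step, precisely the quantities occurring in Theorem~\ref{T:1}. Everything else, namely the change-of-basis formula, the reduction to Theorem~\ref{T:1}, and the final transcendence-degree count, is then purely formal.
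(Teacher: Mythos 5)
Your overall strategy coincides with the paper's: basis-independence of the field of quasi-logarithms, identification of that field with the one generated by the quantities in Theorem~\ref{T:1}, and a transcendence-degree count. Your first step (strictly inner corrections contribute only elements of $\oK$ because $\Exp_{\mathcal{E}_n}(\bsy_\ell)\in\mathcal{E}_n(\oK)$) is fine. But the second step, which you yourself flag as the main obstacle and do not carry out, is where the entire content of the corollary beyond Theorem~\ref{T:1} lies, so as it stands the argument has a genuine gap. The paper does not obtain the field identification by choosing an explicit de Rham basis and solving the functional equations recursively; it uses the vectors $\bg_{\bsy_\ell}$ built from Anderson generating functions in \S\ref{SS:gahaextn} and invokes \cite[Thm.~C(c)]{NPapanikolas21}, which says that for \emph{any} $\oK$-basis $\{\delta_1,\dots,\delta_r\}$ of $\mathrm{H}_{\mathrm{DR}}(\mathcal{E}_n)$ one has $\oK\bigl(\cup_\ell\{F_{\mathcal{E}_n,\delta_i}(\bsy_\ell)\}_i\bigr)=\oK\bigl(\bg_{\bsy_1}|_{t=\theta},\dots,\bg_{\bsy_m}|_{t=\theta}\bigr)$; the explicit evaluation \eqref{E:gtheta1} then identifies the right-hand side with the field generated by the tractable coordinates (and $F_{\mathcal{E}_0,\delta}(\bsy_\ell)$ when $n=0$). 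Without this input (or an actual execution of your recursive computation, which is nontrivial precisely because $\mathrm{H}_{\mathrm{DR}}(\mathcal{E}_n)$ has dimension $r$ while $\mathcal{E}_n$ has dimension $rn+r-1$ and the quasi-periodic functions are not given in closed form), your proof is a plan rather than a proof.

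There is also a quantitative problem in your final count in the case $n=0$. The corollary asserts algebraic independence of $rm$ quasi-logarithms per the union over $\ell$, whereas the set displayed in Theorem~\ref{T:1}(i) has only $(r-1)m$ elements (it omits $y_{\ell,1}$); the natural de Rham basis of $\mathcal{E}_0$ contains the inner classes $\delta^{(e_1)},\dots,\delta^{(e_{r-1})}$, whose quasi-logarithms at $\bsy_\ell$ are, modulo $\oK$, the coordinates $y_{\ell,1},\dots,y_{\ell,r-1}$, so the field generated by a full de Rham basis involves $y_{\ell,1}$ as well. Hence matching ``number of quasi-logarithms = transcendence degree'' cannot be deduced from the statement of Theorem~\ref{T:1}(i) alone; one needs the stronger independence statement, including $y_{\ell,1}$, which is exactly what Theorem~\ref{T:MaintTtr2}(i) (equivalently, the set $\bigcup_u\{F_{\mathcal{E}_0,\delta}(\bseta_u),\eta_{u,1},\dots,\eta_{u,r-1}\}$ appearing in the proof of Theorem~\ref{T:1}) provides. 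For $n\geq 1$ your count is consistent, but for $n=0$ you must route the argument through Theorem~\ref{T:MaintTtr2} rather than through the literal statement of Theorem~\ref{T:1}(i).
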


In what follows, we state our results on the algebraic independence of logarithms for the family $G_n$ of $t$-modules. Let $K_{n}^{\tens}$ be the fraction field of the ring of endomorphisms of $G_n$ (see Remark~\ref{R:isomorphismsofvss} and \S\ref{S:GGofGn}).

\begin{theorem}\label{T:2}
    	For $n \geq 1$ and each $1 \leq \ell \leq m$, let $\bsy_{\ell} =[y_{\ell,1}, \dots, y_{\ell,rn+1}]^{\tr} \in \Lie(G_n)(\CC_{\infty})$ be such that $\Exp_{G_{n}}(\bsy_{\ell})\in G_n(\oK)$. Suppose that  the set $\{\bsy_{1}, \dots, \bsy_{m}\}$ is linearly independent over $K_{n}^{\tens}$. Then, 
	\[
 \bigcup\limits_{\ell=1}^{m} \{y_{\ell, r(n-1)+2}, y_{\ell, r(n-1)+3}, \dots, y_{\ell, rn+1}\}
 \] is algebraically independent over $\oK$. In particular, for some non-constant polynomial $\widetilde{\bff} \in \oK[X_1,\dots,X_{rm}]$, if
	$\widetilde{\bff}(y_{1, r(n-1)+2}, \dots, y_{1, rn+1}, \dots, y_{m, r(n-1)+2}, \dots, y_{m, rn+1})$ is non-zero, then it is transcendental over $\oK$.
\end{theorem}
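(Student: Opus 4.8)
The plan is to pass to the Tannakian category of $\bA$-finite dual $t$-motives, realize the tractable coordinates of the $\bsy_\ell$ as entries of a period matrix, and then compute the dimension of the associated motivic Galois group, following the same blueprint as the proof of Theorem~\ref{T:1}. Write $\cM$ for the dual $t$-motive of the Drinfeld $\bA$-module $G_0$, so that by \eqref{E:tensisom} (see \S\ref{S:tmoduleGn}) the dual $t$-motive of $G_n$ is $\cM\otimes C^{\otimes n}$, where $C^{\otimes n}$ is the $n$-th Carlitz tensor power; it is rigid analytically trivial of rank $r$. Let $\Psi_n$ be a rigid analytic trivialization of $\cM\otimes C^{\otimes n}$, put $\mathbf F_0:=\oK(\Psi_n)$ for its period field, and let $\Gamma_n$ be its motivic Galois group, whose structure in terms of $\End(G_n)$---equivalently of $K_n^{\tens}$---is determined in \S\ref{S:GGofGn}; by Tannakian duality $\trdeg_{\oK}\mathbf F_0=\dim\Gamma_n$. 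For each $\ell$, the hypothesis $\Exp_{G_n}(\bsy_\ell)\in G_n(\oK)$ produces, through the equivalence between $t$-modules and $\bA$-finite dual $t$-motives (see \S\ref{SS:t-motives}) together with the explicit construction and the Anderson--Brownawell--Papanikolas-type analysis of \cite{CM21} (see also \cite{NPapanikolas21, BP02}), a rigid analytically trivial $\bA$-finite dual $t$-motive $\cN_\ell$ sitting in a short exact sequence $0\to\cM\otimes C^{\otimes n}\to\cN_\ell\to\mathbf 1\to 0$, together with a rigid analytic trivialization of block lower-triangular shape $\left(\begin{smallmatrix}\Psi_n & 0\\ v_\ell^{\tr} & 1\end{smallmatrix}\right)$, whose column $v_\ell$ has entries that are, up to nonzero factors in $\oK$ and additive terms in $\mathbf F_0$, precisely the tractable coordinates $y_{\ell,r(n-1)+2},\dots,y_{\ell,rn+1}$.

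Next I would set $\cN:=\bigoplus_{\ell=1}^m\cN_\ell$ and apply Tannakian duality once more: $\trdeg_{\oK}\mathbf F=\dim\Gal(\cN)$, where $\mathbf F:=\oK(\Psi_n,v_1,\dots,v_m)$ is the field generated over $\oK$ by the entries of a rigid analytic trivialization of $\cN$. There is a short exact sequence $1\to\mathbf V\to\Gal(\cN)\to\Gamma_n\to 1$ in which $\mathbf V$ is a vector group that, under the conjugation action of $\Gamma_n$, is a $\Gamma_n$-submodule of $W^{\oplus m}$, where $W$ is the $r$-dimensional representation of $\Gamma_n$ underlying $\cM\otimes C^{\otimes n}$; in particular $\dim\mathbf V\le rm$. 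The heart of the argument is the equality $\dim\mathbf V=rm$, i.e.\ $\mathbf V=W^{\oplus m}$. This is exactly where the hypothesis that $\{\bsy_1,\dots,\bsy_m\}$ is linearly independent over $K_n^{\tens}$ enters: were $\mathbf V$ a proper $\Gamma_n$-submodule of $W^{\oplus m}$, then---using the description of $\Gamma_n$ and of its action on $W$ from \S\ref{S:GGofGn} together with the sub-$t$-motive criterion in the form used in \cite{CM21}---one would obtain a nontrivial $K_n^{\tens}$-linear combination $\sum_\ell c_\ell\bsy_\ell$ whose associated extension of $\mathbf 1$ by $\cM\otimes C^{\otimes n}$ is split; passing back through $\Exp_{G_n}$ and \eqref{E:FuncEqtn}, this contradicts the linear independence of $\{\bsy_1,\dots,\bsy_m\}$ over $K_n^{\tens}$. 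Granting this, $\trdeg_{\oK}\mathbf F=\dim\Gal(\cN)=\dim\Gamma_n+rm$.

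To finish, let $\mathbf F_1$ be the subfield of $\mathbf F$ generated over $\oK$ by the $rm$ numbers $y_{\ell,r(n-1)+2},\dots,y_{\ell,rn+1}$ for $1\le\ell\le m$. Because the entries of each $v_\ell$ differ from these numbers only by factors in $\oK^{\times}$ and additive terms in $\mathbf F_0$, one has $\mathbf F=\mathbf F_0(v_1,\dots,v_m)=\mathbf F_0\cdot\mathbf F_1$. Subadditivity of transcendence degree then gives
\[
\dim\Gamma_n+rm=\trdeg_{\oK}\mathbf F\le\trdeg_{\oK}\mathbf F_0+\trdeg_{\oK}\mathbf F_1=\dim\Gamma_n+\trdeg_{\oK}\mathbf F_1,
\]
so $\trdeg_{\oK}\mathbf F_1\ge rm$; as $\mathbf F_1$ is generated over $\oK$ by $rm$ elements, equality holds, which is the asserted algebraic independence of $\bigcup_{\ell=1}^m\{y_{\ell,r(n-1)+2},\dots,y_{\ell,rn+1}\}$ over $\oK$. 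The concluding statement about $\widetilde{\bff}$ is then immediate, since a non-constant polynomial in $\oK[X_1,\dots,X_{rm}]$ evaluated at algebraically independent elements over $\oK$ is transcendental over $\oK$ whenever the value is non-zero.

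The main obstacle is the lower bound $\dim\mathbf V=rm$: one must show that the extension classes of the $\cN_\ell$ are linearly independent in precisely the sense governed by $\End(\cM\otimes C^{\otimes n})$, and that this independence is faithfully detected by the $K_n^{\tens}$-linear independence of $\{\bsy_1,\dots,\bsy_m\}$---in particular one must control the effect of periods and of $\oK$-rational points in the passage between these two notions. This relies on the explicit determination of $\Gamma_n$ and of its module $W$ carried out in \S\ref{S:GGofGn}, together with a careful sub-$t$-motive (equivalently, Frobenius difference module) argument excluding unexpected $\Gamma_n$-submodules of $W^{\oplus m}$; it is the step that generalizes the independence input in the work of Chang and Yu on Carlitz polylogarithms. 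A secondary technical point is pinning down the normalization relating the entries of $v_\ell$ to the tractable coordinates up to $\oK^{\times}$-factors and terms in $\mathbf F_0$, which underlies the identity $\mathbf F=\mathbf F_0\cdot\mathbf F_1$ used above.
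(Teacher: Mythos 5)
Your overall architecture (extension classes in $\Ext_\sT^1(\mathbf{1}_{\sP},H_n^{\tens})$, the exact sequence $1\to W^{\tens}\to\Gamma_{\textbf{X}^{\tens}}\to\Gamma_{H_n^{\tens}}\to 1$, and the dimension count) is the paper's, but there is a genuine gap at the decisive step, the claim $\dim \mathbf V=rm$. Splitness of the class $e_{1*}Y^{\tens}_1+\dots+e_{m*}Y^{\tens}_m$ does \emph{not} force a nontrivial relation $\sum_\ell \rd\rP_\ell\bsy_\ell=0$; it forces a relation \emph{modulo the period lattice}. Concretely, in the paper's Theorem~\ref{T:Trivial} the trivialization matrix $\bsgamma$ is compared with the rigid analytic trivialization built from Anderson generating functions of a period basis, and what comes out is an identity of the form $\sum_{\ell}\rd\rP_\ell\bsy_\ell=-\rd\rP_{f_1}\bslambda_1-\dots-\rd\rP_{f_r}\bslambda_r$. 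This contradicts linear independence over $K_n^{\tens}$ of the \emph{enlarged} family $\{\bslambda_1,\dots,\bslambda_{r/\bs},\bsy_1,\dots,\bsy_m\}$, not of $\{\bsy_1,\dots,\bsy_m\}$ alone. Indeed, if some $\rd\rP\bsy_\ell$ is a nonzero endomorphism-combination of periods (which is perfectly compatible with your hypothesis, e.g.\ when $\Exp_{G_n}(\bsy_\ell)$ is a torsion point of a nonzero endomorphism), the corresponding class becomes trivial after applying a suitable $e\in\rL_n^{\tens}$, so $\dim W^{\tens}<rm$ and your asserted equality $\trdeg_{\oK}\mathbf F=\dim\Gamma_n+rm$ fails for the motive built from the $\bsy_\ell$ alone; your subadditivity argument then only yields $\trdeg_{\oK}\mathbf F_1\geq \dim W^{\tens}$, which is not enough.

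The missing ingredient is the reduction step the paper uses to pass from the stated hypothesis to the stronger one: enlarge $\{\bsy_1,\dots,\bsy_m\}$ to a maximal $K_n^{\tens}$-linearly independent subset $\{\boldsymbol{\vartheta}_1,\dots,\boldsymbol{\vartheta}_j\}$ of $\{\bslambda_1,\dots,\bslambda_r,\bsy_1,\dots,\bsy_m\}$ containing all the $\bsy_\ell$. For this family the joint independence with the periods holds by construction, so Theorem~\ref{T:Trivial} applies and the Galois-group computation (Theorem~\ref{T:MaintTtr}) gives $\dim\Gamma_{\textbf{X}^{\tens}}=r^2/\bs+rj'$ for the corresponding motive; the resulting algebraic independence statement covers the tractable coordinates of all $\boldsymbol{\vartheta}_u$ (periods included, via \eqref{E:matrix2}), and the coordinates of the $\bsy_\ell$ are read off as a sub-collection, using the shape \eqref{E:dEnd1}--\eqref{E:W} of $\rd\rP^{\tens}$ to see that replacing a $\bsy_\ell$ or $\bslambda_i$ by a $K_n^{\tens}$-combination does not change the field generated by the bottom $r$ coordinates. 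Without this enlargement (or some substitute controlling relations between the $\bsy_\ell$ and the periods), the contradiction you invoke does not exist and the lower bound $\dim\mathbf V=rm$ is unjustified. Your final bookkeeping (the identification of the entries of $\bg^{\tens}_{\bsy_\ell}(\theta)$ with the tractable coordinates up to $\oK$-data, and the subadditivity count) is fine and matches the paper once the corrected dimension statement is in hand.
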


As in the case of $\mathcal{E}_n$, we note that the de Rham module $\mathrm{H}_{\mathrm{DR}}(G_n)$ of $G_n$ is also an $r$-dimensional $\oK$-vector space parametrizing the extensions of $G_n$ by $\GG_a$.  Let $\{\delta_1^{\tens}, \dots, \delta_r^{\tens}\}$ be a $\oK$-basis of $\mathrm{H}_{\mathrm{DR}}(G_n)$. Setting $\bsy =[y_1, \dots, y_{rn+1}]^\tr\in \Lie(G_n)(\CC_\infty)$, for each $n\geq 1$, we have (see also the proof of Corollary \ref{C:Int1} in \S\ref{SS:ProofThmiii})
\[
\oK\left(\cup_{i=1}^r \{F_{G_n, \delta^{\tens}_i}(\bsy)\}\right) = \oK\left(y_{r(n-1)+2}, y_{r(n-1)+3}, \dots, y_{rn+1}\right).
\]
Thus, our next corollary can be stated as follows.

\begin{corollary}\label{C:Int1}
	For $n \geq 1$ and each $1 \leq \ell \leq m$, let $\bsy_{\ell} =[y_{\ell,1}, \dots, y_{\ell,rn+1}]^{\tr} \in \Lie(G_n)(\CC_{\infty})$ be such that $\Exp_{G_{n}}(\bsy_{\ell})\in G_n(\oK)$. Suppose that  $\bsy_{1}, \dots, \bsy_{m}$ are linearly independent over $K_{n}^{\tens}$. Then, 
	\[
 \bigcup\limits_{\ell=1}^{m} \left\{ F_{G_n, \delta^{\tens}_1}(\bsy_\ell), \dots, F_{G_n, \delta^{\tens}_r}(\bsy_\ell)\right\}
 \] 
 is algebraically independent over $\oK$. 
\end{corollary}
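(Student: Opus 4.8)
The plan is to deduce Corollary~\ref{C:Int1} from Theorem~\ref{T:2} by exploiting the fact that the tractable coordinates of a logarithm of $G_n$ span, together with $\oK$, the same field as the values of the quasi-periodic functions attached to a basis of $\mathrm{H}_{\mathrm{DR}}(G_n)$. Concretely, the first step is to set up the comparison between the bottom-$r$ coordinates $y_{r(n-1)+2}, \dots, y_{rn+1}$ of a logarithm $\bsy\in\Lie(G_n)(\CC_\infty)$ with $\Exp_{G_n}(\bsy)\in G_n(\oK)$ and the quasi-logarithms $F_{G_n,\delta^{\tens}_i}(\bsy)$ for $i=1,\dots,r$. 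This amounts to unwinding the explicit shape of $\phi_n$ and of the $\varphi$-biderivations on $G_n$ given in \S\ref{S:tmoduleGn} and \S\ref{SS:bids}: because $G_n$ is the tensor product of the dual $t$-motive of $G_0$ with the $n$-th Carlitz tensor power, its de Rham module is $r$-dimensional and one can choose the basis $\delta^{\tens}_1,\dots,\delta^{\tens}_r$ so that $F_{G_n,\delta^{\tens}_i}(\bsy)$ is an explicit $\oK$-linear combination of the coordinates of $\bsy$ (in fact, for a suitable ``standard'' basis, it is essentially one of the bottom coordinates of $\bsy$, possibly up to adding some of the $y_j$ with $j$ in the same block). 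This is exactly the content of the displayed identity
\[
\oK\bigl(\textstyle\bigcup_{i=1}^r \{F_{G_n, \delta^{\tens}_i}(\bsy)\}\bigr) = \oK\bigl(y_{r(n-1)+2}, y_{r(n-1)+3}, \dots, y_{rn+1}\bigr)
\]
stated just before the corollary, whose proof I would place in \S\ref{SS:ProofThmiii} and which is the technical heart of the passage.

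Once that field identity is in hand, the argument is short. Apply it simultaneously to each $\bsy_\ell$, $\ell=1,\dots,m$. One gets
\[
\oK\Bigl(\textstyle\bigcup_{\ell=1}^m\bigcup_{i=1}^r\{F_{G_n,\delta^{\tens}_i}(\bsy_\ell)\}\Bigr)
= \oK\Bigl(\textstyle\bigcup_{\ell=1}^m\{y_{\ell,r(n-1)+2},\dots,y_{\ell,rn+1}\}\Bigr).
\]
By Theorem~\ref{T:2}, the $rm$ elements on the right-hand side generating set are algebraically independent over $\oK$ (using precisely the hypothesis that $\{\bsy_1,\dots,\bsy_m\}$ is linearly independent over $K_n^{\tens}$). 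Hence the field on the left has transcendence degree exactly $rm$ over $\oK$, and it is generated over $\oK$ by the $rm$ quasi-logarithms $F_{G_n,\delta^{\tens}_i}(\bsy_\ell)$. A field of transcendence degree $rm$ generated by $rm$ elements must have those generators algebraically independent, so $\bigcup_{\ell=1}^m\{F_{G_n,\delta^{\tens}_1}(\bsy_\ell),\dots,F_{G_n,\delta^{\tens}_r}(\bsy_\ell)\}$ is algebraically independent over $\oK$, which is the claim.

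The one point requiring care — and the main obstacle — is establishing the field identity above for every choice of $\oK$-basis $\{\delta^{\tens}_1,\dots,\delta^{\tens}_r\}$ of $\mathrm{H}_{\mathrm{DR}}(G_n)$. Two things must be checked: first, that the $\oK$-span of $\{F_{G_n,\delta}(\bsy): \delta\in\mathrm{H}_{\mathrm{DR}}(G_n)\}$ only depends on the class of $\delta$ modulo inner biderivations (so that the function is well-defined on the de Rham module) and that replacing one basis by another multiplies the vector of quasi-logarithms by an element of $\GL_r(\oK)$, hence does not change the generated field; and second, that for one convenient explicit basis the quasi-logarithms are an invertible $\oK$-linear combination of $y_{r(n-1)+2},\dots,y_{rn+1}$ and conversely. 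The first is formal from the definitions in \S\ref{SS:bids} together with the fact that inner biderivations contribute only $\oK$-rational quantities (values of $a(\theta)F - \delta(a)(\Exp_{G_n}(\bsy))$ become algebraic since $\Exp_{G_n}(\bsy)\in G_n(\oK)$). The second is a direct computation with the matrices $\phi_n(t)$ and the associated recursions for $\Exp_{G_n}$ and $F_{G_n,\delta}$; the $n$-th Carlitz tensor twist makes the bottom block behave like the Carlitz tensor power, for which the comparison between logarithm coordinates and quasi-logarithms is classical. I would carry this out for $G_n$ explicitly in \S\ref{SS:ProofThmiii}, and then Corollary~\ref{C:Int1} follows immediately by the transcendence-degree count above.
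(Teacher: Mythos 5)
Your overall deduction is the same as the paper's: reduce everything to the field identity $\oK\bigl(\bigcup_{i=1}^r\{F_{G_n,\delta^{\tens}_i}(\bsy_\ell)\}\bigr)=\oK\bigl(y_{\ell,r(n-1)+2},\dots,y_{\ell,rn+1}\bigr)$, then invoke Theorem~\ref{T:2} and conclude by the transcendence-degree count; that final step is exactly what the paper does. Where you diverge is in how the field identity is obtained. The paper does not compute with an explicit de Rham basis at all: it cites \cite[Thm.~C(c)]{NPapanikolas21} (using that $G_n$ is almost strictly pure, Remark~\ref{R:aspC}) to identify $\oK\bigl(\bigcup_i\{F_{G_n,\delta^{\tens}_i}(\bsy)\}\bigr)$ with $\oK\bigl(\bg^{\tens}_{\bsy}|_{t=\theta}\bigr)$ for the Anderson-generating-function vector $\bg^{\tens}_{\bsy}$ of \eqref{E:gforG}, and then reads off from the explicit specialization \eqref{E:gtheta} that $\bg^{\tens}_{\bsy}(\theta)$ is an invertible (anti-triangular, with $a_r\neq 0$) $\oK$-linear transform of $(y_{r(n-1)+2}-\alpha_{r(n-1)+2},\dots,y_{rn+1}-\alpha_{rn+1})$, the $\alpha$'s being in $\oK$. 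You instead propose a self-contained computation: choose a convenient basis of biderivations, show its quasi-logarithms are an invertible $\oK$-linear combination of the bottom coordinates modulo $\oK$, and handle basis independence formally. The formal part of your argument is sound (strictly inner biderivations contribute $F_{G_n,\delta^{(U)}}(\bsy)=U\Exp_{G_n}(\bsy)\in\oK$ since $\Exp_{G_n}(\bsy)\in G_n(\oK)$ and $\rd U=0$, and $\delta\mapsto F_{G_n,\delta}$ is $\oK$-linear by uniqueness in \eqref{E:quasper}, so a $\GL_r(\oK)$ change of basis does not alter the generated field), but the explicit computation you defer is precisely the nontrivial content that the paper outsources to \cite[Thm.~C(c)]{NPapanikolas21} together with \eqref{E:gtheta}; if you carry it out (most naturally again via Anderson generating functions, as in \eqref{E:AGFQuasi}), your route is a more hands-on but equivalent way of getting the same identity, at the cost of redoing for $G_n$ what the cited result already provides in general.
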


\begin{remark} Let $G_0:=\mathbf{C}=(\mathbb{G}_{a/\oK},C)$ be the Carlitz module given by $C(t):=\theta+\tau$. Consider the $n$-th Carlitz tensor power $\mathbf{C}^{\otimes n}=(\mathbb{G}_{a/\overline{K}},C^{\otimes n})$ (see Example \ref{Ex:0}(ii) for its explicit definition). Then, for $n \geq  2$, we have $G_{n-1} =\mathbf{C} \otimes \mathbf{C}^{\otimes (n-1)}= \mathbf{C}^{\otimes n}$. For $m=0$, set $L_0:=1$ and for $m\geq 1$, consider $L_m:=(\theta-\theta^{q^m})L_{m-1}\in A$. Let $\alpha_1,\dots,\alpha_u\in \oK^{\times}$ be such that $\inorm{\alpha_i}<q^{nq/(q-1)}$ for each $1\leq i \leq u$. Define \textit{the polylogarithm} $\mathcal{L}_i:=\sum_{m\geq 0}\frac{\alpha_i^{q^m}}{L_m^n}\in \mathbb{C}_{\infty}$. Then, by \cite[\S2.2, 2.4]{AndThak90}, we have 
\[
\Exp_{\mathbf{C}^{\otimes n}}(\mathcal{V}_i)=\begin{pmatrix} 0\\
\vdots \\
0\\
\alpha_i
\end{pmatrix}\in \mathbf{C}^{\otimes n}(\oK), \ \ \mathcal{V}_i:=\begin{pmatrix} *\\
\vdots \\
*\\
\mathcal{L}_i
\end{pmatrix}.
\]
Suppose that $\mathcal{L}_1,\dots, \mathcal{L}_u$ are linearly independent over $K$. Note that $K_{n-1}^{\text{tens}} = K_C = K$ (see
Remark \ref{R:isomorphismsofvss}(ii) and Proposition \ref{P:rings}). This means that $\mathcal{V}_1,\dots, \mathcal{V}_u$ are linearly independent over $K_{n-1}^{\text{tens}}$. Thus, Theorem \ref{T:2} implies that $\mathcal{L}_1,\dots, \mathcal{L}_u$ are algebraically
independent over  over $\oK$. This recovers the result of Chang and Yu \cite[Cor. 3.2]{CY07} for the Carlitz tensor power case.
\end{remark}

In what follows, we explain the strategy of the proof of our main results. We inform the reader that Theorem \ref{T:1} and Theorem \ref{T:2} use the same ideas up to certain technical differences which will be discussed explicitly throughout the paper and so, we explain our methods for Theorem~\ref{T:1}. Our main tool is Papanikolas’s theorem \cite[Thm. 1.1.7]{P08} which  proves that the transcendence degree of the period matrix of a $t$-motive is equal to the dimension of its associated motivic Galois group (see Theorem~\ref{T:TannakianMain}). 
 First, by using its relation to the $t$-motive $H_\phi$ associated to the Drinfeld $\bA$-module $G_0$, we show that the $t$-motive $H_n$ associated to $\mathcal{E}_n$ is simple and that the corresponding motivic Galois groups $\Gamma_{H_\phi}$ and $\Gamma_{H_n}$ of $G_0$ and $\mathcal{E}_n$ respectively are equal (see \S\ref{S:GGofEn}). Then, since an explicit description of $\Gamma_{H_\phi}$ is known \cite[Thm.~3.5.4]{CP12}, we obtain an explicit description of $\Gamma_{H_n}$. 
 
 Next, we define particular elements in the Tate algebra (see \S\ref{SS:gahaextn}) and construct $t$-motives $Y_1, \dots, Y_m$ (see \S\ref{SS:tmotiveconsextn}) so that the entries of their period matrices are related to $\bsy_1, \dots, \bsy_m$ satisfying the hypothesis of Theorem~\ref{T:1}.  These $t$-motives represent classes in $\Ext_{\sT}^1(\mathbf{1}_{\sT}, H_n)$, where $\sT$ denotes the category of $t$-motives. To describe precisely the elements of $K_n$, we construct an explicit map that describes an isomorphism between $\End(G_0)$ and $\End(\mathcal{E}_n)$, and further identify $\End(\mathcal{E}_n)$ with an integral domain described using this isomorphism (see \S\ref{S:EndEn}). 
 Inspired by the methods of Chang and Papanikolas \cite{CP12}, using this identification of $\End(\mathcal{E}_n)$, we prove an essential result (see Theorem~\ref{T:Trivial2}), which shows that if $\bsy_1, \dots, \bsy_m$ are linearly independent over $K_n$ from an $\bA$-basis of $\Lambda_{\mathcal{E}_n}$, then $Y_1, \dots, Y_m$ are linearly independent over $\End_{\sT}(H_n)$. Then, we explicitly calculate the motivic Galois group of the $t$-motive $\mathbf{X}$ isomorphic to $\oplus_{\ell=1}^m Y_\ell$ (see Theorem~\ref{T:MaintTtr2}).
This result yields the algebraic independence of desired coordinates of logarithms and periods of $\mathcal{E}_n$.

The outline of the present paper can be given as follows. In \S\ref{S:tmodules}, we introduce $t$-modules as well as certain objects attached to them such as Anderson generating functions, biderivations, and quasi-periodic functions. Later, we revisit the theory of Papanikolas established in \cite{P08} and provide necessary details used in the paper. In \S\ref{S:tmoduleEn} and \S\ref{S:tmoduleGn}, we take a closer look at the $t$-modules $\mathcal{E}_n$ and $G_n$. More precisely, we determine their motivic Galois groups and describe their endomorphisms. Finally, we prove Theorem \ref{T:1} in \S\ref{S:MainResultsEn} and Theorem \ref{T:2} in \S\ref{S:MainResultsGn}.

\subsection*{Acknowledgments} The authors thank the referee for careful reading and useful suggestions that improved the presentation of the results. The first author was supported by NSTC Grant 113-2115-M-007-001-MY3. The first author was also partially supported by Deutsche Forschungsgemeinschaft (DFG) through CRC-TR 326 `Geometry and Arithmetic of Uniformized Structures', project number 444845124.  The second author was partially supported by MOST Grant 110-2811-M-007-517.

\section{\texorpdfstring{$t$}{t}-modules}\label{S:tmodules}
In this section, we first introduce $t$-modules which can be seen as a higher dimensional generalization of Drinfeld $\bA$-modules (see \cite{And86}, \cite{BP20} for more details) and objects corresponding to them such as Anderson generating functions, biderivations, and quasi-periodic functions. We also provide necessary background on $t$-motives and state transcendence results of Papanikolas \cite{P08}.

\subsection{Tate algebras and twisting} Let $t$ be an indeterminate over $\CC_{\infty}$. We define \textit{the Tate algebra} $\mathbb{T}$ to be the ring of power series of $t$ with coefficients in $\CC_{\infty}$ satisfying the condition: 
\[
\TT:=\Big\{ \sum_{i=0}^{\infty} c_it^i\in \CC_{\infty}[[t]]   \ \ | \ \  \inorm{c_i}\to 0 \text{ as } i
\to \infty\Big\}.
\]
One can equip $\TT$ with the non-archimedean norm $\dnorm{\cdot}$ given by
$
\dnorm{\sum_{i=0}^{\infty} c_it^i}:=\max\{\inorm{c_i}\ \ |\ \  i\in \ZZ_{\geq 0}  \}.
$

For positive integers $m$ and $\ell$, we canonically extend the norm $\dnorm{\cdot}$ to the set of $(m\times \ell)$-matrices  $\Mat_{m\times \ell}(\TT)$ with coefficients in $\TT$  so that for any $B=(B_{ik})\in \Mat_{m\times \ell}(\TT)$, we have $\dnorm{B}:=\max\{\dnorm{B_{ik}}\}$. 

For any $f=\sum_{i\geq 0}c_it^i\in \TT$ and $j\in \ZZ$, we define $f^{(j)}:=\sum_{i\geq 0}c_i^{q^j}t^i\in \TT$ and extend it to any matrix  $B=(B_{ik})\in \Mat_{m\times \ell}(\TT)$ so that $B^{(j)}:=(B_{ik}^{(j)})\in  \Mat_{m\times \ell}(\TT)$. Furthermore, we set $\LL$ to be the fraction field of  $\TT$ and for any element $\mathfrak{p}=f/g\in \LL$ with $f\in \TT$ and $g\in \TT\setminus\{0\}$, we set $\mathfrak{p}^{(j)}:=f^{(j)}/g^{(j)}$. The extension of the twisting operation for the elements of $\Mat_{m\times \ell}(\LL)$ can be defined similarly. Considering the entries of $B\in \Mat_{m\times \ell}(\LL)$ as rational functions of $t$, for any $\xi\in \CC_{\infty}$, we also fix the notation $B(\xi):=B_{|t=\xi}$ whenever the right hand side converges in $\Mat_{m\times \ell}(\CC_{\infty})$. 

We now introduce an element in $\TT$ which will be fundamental for our computations. Let $(-\theta)^{1/(q-1)}$ be a fixed $(q-1)$-st root of $-\theta$. Consider the infinite product called \emph{the Anderson-Thakur series}
\[
\Omega:=\Omega(t):=(-\theta)^{-q/(q-1)}\prod_{i=1}^{\infty}\Big(1-\frac{t}{\theta^{q^i}}\Big)\in \CC_{\infty}[[t]].
\]
One can see that $\Omega$ is indeed a unit in $\TT$ and an entire function of $t$ \cite[Sec. 3.3.4]{P08}. Moreover, it satisfies the functional equation  $\Omega^{(-1)}=(t-\theta)\Omega$. We furthermore define \textit{the Carlitz period} $\widetilde{\pi}$ by
\[
\widetilde{\pi}:=-\Omega^{-1}(\theta)=\theta(-\theta)^{1/(q-1)}\prod_{i=1}^{\infty}(1-\theta^{1-q^i})^{-1}\in \CC_{\infty}^{\times}.
\]

	\subsection{\texorpdfstring{$t$}{t}-modules and Anderson generating functions}\label{TmodulesNAGF} 
	In this subsection, we restate the definition of $t$-modules for the convenience of the reader, and then introduce endomorphisms of $t$-modules and Anderson generating functions attached to them.
	
	\begin{definition} 
		\begin{itemize}
			\item[(i)] We let $L$ be a field with $K \subseteq L \subseteq \CC_\infty$. A \emph{$t$-module  defined over $L$ of dimension $s\in \ZZ_{\geq 1}$} is a tuple $\smash{G=(\mathbb{G}_{a/L}^{s},\varphi)}$ consisting of the $s$-dimensional additive algebraic group $\mathbb{G}_{a/L}^s$ over $L$ and an $\mathbb{F}_q$-linear ring  homomorphism $\varphi:\bA\to \Mat_s(L)[\tau]$ given by 
			\begin{equation}\label{E:varphi}
			\varphi(t)=A_0+A_1\tau+\dots+A_m\tau^m
			\end{equation}
			for some $m\in \ZZ_{\geq 0}$ so that $\rd_{\varphi}(t):=\rd \varphi(t)=A_0=\theta \Id_s+N$ where $\Id_s$ is the $s\times s$ identity matrix and $N$ is a nilpotent matrix. 
			
			\item[(ii)] Let $G_1=(\mathbb{G}_{a/L}^{s_1},\varphi_1)$ and $G_2=(\mathbb{G}_{a/L}^{s_2},\varphi_2)$ be two $t$-modules.  \textit{A morphism $\rP: G_1 \rightarrow G_2$ defined over $L$} is an element $\rP \in \Mat_{s_2 \times s_1}(L[\tau])$ satisfying
			\begin{equation*}\label{E:isogn}
			\rP\varphi_1(t) = \varphi_2(t) \rP.
			\end{equation*}
			We set $\Hom_{L}(G_1,G_2)$ to be the set of morphisms $G_1\to G_2$ defined over $L$ and fix the notation $\End_{L}(G):=\Hom_{L}(G,G)$ for the endomorphism ring of $G=(\mathbb{G}_{a/{L}}^{s},\varphi)$. Note that $\End_{L}(G)$ has an $\bA$-module structure  given by 
			\[
			a\cdot \rP:=\varphi(a)\rP, \ \ a\in \bA,  \ \ \rP\in \End_{L}(G).
			\] 
			Furthermore, when $s_1=s_2$, we say that $G_1$ and $G_2$ are \textit{isomorphic over ${L}$} if there exists an element $u\in \GL_{s_1}({L})$ so that $u\varphi_1(t) = \varphi_2(t) u$.
		\end{itemize}
	\end{definition}
	
	\begin{example}\label{Ex:0} 
		\begin{itemize}
			\item[(i)] A Drinfeld $\bA$-module is a $t$-module of dimension one.  		
	\item[(ii)]
 The Drinfeld $\bA$-module $\mathbf{C}:=(\GG_{a/K}, C)$ of rank one given by 
	\[
	C(t):=\theta+\tau
	\]	
 is called the \emph{Carlitz module}.
	For any $n\in \smash{\ZZ_{\geq 1}}$, we define the $n$-dimensional $t$-module $\mathbf{C}^{\otimes n}:=(\mathbb{G}_{a/K}^n,C^{\otimes n})$, where the $\mathbb{F}_q$-linear homomorphism  $C^{\otimes n}:\bA\to \Mat_n(K)[\tau]$ is defined by 
		
		\[
		C^{\otimes n}(t):=\begin{bmatrix}
		\theta&1& & \\
		& \ddots&\ddots & \\
		& & \theta & 1\\
		& & & \theta  
		\end{bmatrix}+\begin{bmatrix}
		0&\dots&\dots &0 \\
		\vdots& & &\vdots \\
		0& &  & \vdots\\
		1&0&\dots& 0 
		\end{bmatrix}\tau.
		\] 
\noindent We call $\mathbf{C}^{\otimes n}$ \textit{the $n$-th tensor power of the Carlitz module} (see \cite{AndThak90} for more details). 
		\end{itemize}
	\end{example}

 In what follows, we briefly explain the setting of vector valued Anderson generating functions. One can consult \cite{Gre19A,Gre19B,GreDac20,NPapanikolas21} for further details.

For a $t$-module $G=(\mathbb{G}_{a/L}^s,\varphi)$ and $\boldsymbol{w}\in \Lie(G)(\CC_{\infty})$, we define the \emph{Anderson generating function $\cG_{\boldsymbol{w}}(t)$ of $G$ at $\boldsymbol{w}$} to be the $s$-dimensional vector of power series in $t$ given by 
\begin{equation}\label{E:AGF}
\cG_{\boldsymbol{w}}(t):=\sum_{i=0}^{\infty} \Exp_G(\rd_{\varphi}(t)^{-i-1} \boldsymbol{w}) t^i\in \TT^s.
\end{equation}
To distinguish our objects, when $G$ is a Drinfeld $\bA$-module, we fix the notation $f_{\boldsymbol{w}}(t)$ for the Anderson generating function of $G$ at $\boldsymbol{w}$.

For any element $B=B_0+B_1\tau +\dots +B_k\tau^k\in \Mat_{m\times s}(L[\tau])$ and $\textbf{x}\in \TT^s$, we set, by a slight abuse of notation,
\[
B\cdot \textbf{x}:=B_0\textbf{x}+\dots+B_k\textbf{x}^{(k)}.
\]  
We collect some properties of Anderson generating functions in the following lemma which can be seen as a generalization of Pellarin's results \cite{Pellarin08} on Anderson generating functions of Drinfeld $\bA$-modules. We refer the reader to \cite[Sec. 4.2]{NPapanikolas21} for its proof.
\begin{lemma}\cite[Lem.~4.2.2, Rem.~4.2.5, Prop.~4.2.12(b)]{NPapanikolas21}\label{L:AGFproperties}\label{L:AGFNP21}
Let $G=(\mathbb{G}_{a/L}^s,\varphi)$ be a $t$-module. Let $\boldsymbol{w}\in \Lie(G)(\CC_{\infty})$ and  $\Exp_G=\smash{\sum_{i\geq 0}\alpha_i\tau^i}$ be the exponential function of $G$. 
	\begin{itemize}
	\item[(i)] We have 
	\[
	\cG_{\boldsymbol{w}}(t)=\sum_{i=0}^{\infty}\alpha_i ((\rd_{\varphi}(t) -t\Id_s)^{-1})^{(i)}\boldsymbol{w}^{(i)},
	\]
	which converges in $\TT^s$. Furthermore each entry of $\mathcal{G}_{\boldsymbol{w}}(t)$ can be extended to a meromorphic function of t with possible poles only at $t=\theta,\theta^q,\theta^{q^2},\dots.$
	\item[(ii)] We have 
	\[
	\varphi(t)\cdot\cG_{\boldsymbol{w}}(t)=t\cG_{\boldsymbol{w}}(t)+\Exp_G(\boldsymbol{w}).
	\]
	\end{itemize}
\end{lemma}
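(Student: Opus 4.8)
The plan is to prove (i) by starting from the defining series \eqref{E:AGF}, substituting the expansion $\Exp_G=\sum_{j\ge 0}\alpha_j\tau^j$, summing the resulting geometric series in the parameter $t$, and justifying the interchange of the two summations by the non-archimedean Fubini principle, whose key input is that $\Exp_G$ is entire. The closed formula obtained this way makes the meromorphic continuation transparent. Part (ii) I would get by a short direct computation from the functional equation \eqref{E:FuncEqtn}.

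\emph{The formula in (i).} Write $M:=\rd_{\varphi}(t)=\theta\Id_s+N$ with $N$ nilpotent, so $M$ is invertible and $M^{-i-1}=\theta^{-i-1}\sum_{k=0}^{s-1}\binom{-i-1}{k}(N/\theta)^k$. Since each integer $\binom{-i-1}{k}$ has $\infty$-adic norm at most $1$, this gives a bound $\dnorm{M^{-i-1}}\le Dq^{-(i+1)}$ uniform in $i$, with $D$ depending only on $s$ and $\dnorm{N}$. In particular $\dnorm{M^{-i-1}\boldsymbol{w}}\to 0$, and since $\sup_j\dnorm{\alpha_j}<\infty$ (a consequence of entireness) one gets $\dnorm{\Exp_G(M^{-i-1}\boldsymbol{w})}\to 0$, so $\cG_{\boldsymbol{w}}(t)$ is a well-defined element of $\TT^s$. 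I would then insert $\Exp_G(M^{-i-1}\boldsymbol{w})=\sum_j\alpha_j(M^{(j)})^{-i-1}\boldsymbol{w}^{(j)}$ and note that the general term $\alpha_j(M^{(j)})^{-i-1}\boldsymbol{w}^{(j)}t^i$ has $\TT$-norm at most $\dnorm{\alpha_j}(q^{-1}D\dnorm{\boldsymbol{w}})^{q^j}q^{-iq^j}$, which tends to $0$ as $i+j\to\infty$ by entireness; hence the double series converges in $\TT^s$ and can be summed in either order. Summing over $j$ first returns $\cG_{\boldsymbol{w}}(t)$; summing over $i$ first gives $\sum_j\alpha_j\bigl(\sum_i(M^{(j)})^{-i-1}t^i\bigr)\boldsymbol{w}^{(j)}$, and the inner sum equals $(M^{(j)}-t\Id_s)^{-1}$ (check by multiplying through by $M^{(j)}-t\Id_s$). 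Because twisting fixes $t$ and is a ring homomorphism, $(M^{(j)}-t\Id_s)^{-1}=\bigl((M-t\Id_s)^{-1}\bigr)^{(j)}$, which is exactly the asserted identity.

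\emph{Meromorphic continuation.} Fix $\rho>0$ and set $D_\rho:=\{\xi\in\CC_\infty:\inorm{\xi}\le\rho\}$. The $i$-th summand $g_i:=\alpha_i\bigl((M-t\Id_s)^{-1}\bigr)^{(i)}\boldsymbol{w}^{(i)}$ is, entrywise, a rational function of $t$ with poles only at $t=\theta^{q^i}$, coming from the factors $(\theta^{q^i}-t)^{-k-1}$ in the expansion $((M-t\Id_s)^{-1})^{(i)}=\sum_k(-1)^{kq^i}(\theta^{q^i}-t)^{-k-1}(N^{(i)})^k$. Once $i$ is large enough that $\inorm{\theta^{q^i}}=q^{q^i}>\rho$, one has $\sup_{D_\rho}\dnorm{g_i}\le\dnorm{\alpha_i}(q^{-1}D\dnorm{\boldsymbol{w}})^{q^i}\to 0$, so the tail of $\sum_i g_i$ converges uniformly on $D_\rho$ to a holomorphic function there; adding back the finitely many remaining $g_i$ (each meromorphic on $D_\rho$ with poles among $\{\theta^{q^i}\}$) and letting $\rho\to\infty$ yields the meromorphic continuation of $\cG_{\boldsymbol{w}}$ to all of $\CC_\infty$ with poles only at $t=\theta,\theta^q,\theta^{q^2},\dots$.

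\emph{Part (ii) and the main obstacle.} Writing $\varphi(t)=\sum_{k=0}^m A_k\tau^k$, its action on $\TT^s$ is $\varphi(t)\cdot\mathbf{x}=\sum_k A_k\mathbf{x}^{(k)}$; since twisting fixes $t$ it commutes with multiplication by each scalar $t^i$ and is continuous, so $\varphi(t)\cdot\cG_{\boldsymbol{w}}(t)=\sum_{i\ge 0}\bigl(\varphi(t)\,\Exp_G(\rd_{\varphi}(t)^{-i-1}\boldsymbol{w})\bigr)t^i$, and \eqref{E:FuncEqtn} turns the bracket into $\Exp_G(\rd_{\varphi}(t)^{-i}\boldsymbol{w})$. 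Splitting off the $i=0$ term and re-indexing the rest gives $\varphi(t)\cdot\cG_{\boldsymbol{w}}(t)=\Exp_G(\boldsymbol{w})+t\,\cG_{\boldsymbol{w}}(t)$, as claimed. I expect the only genuine difficulty to lie in the analytic bookkeeping of (i): one must produce uniform estimates on $\sup_{D_\rho}\dnorm{g_i}$ strong enough to beat the $q^i$-th power growth of $\boldsymbol{w}^{(i)}$, which is precisely where the entireness of $\Exp_G$ and the binomial bound $\dnorm{M^{-i-1}}\le Dq^{-(i+1)}$ are essential; the algebra in the closed formula and in (ii) is then routine.
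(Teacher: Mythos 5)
Your proposal is correct. Note that the paper itself gives no proof of this lemma --- it is quoted from \cite[Lem.~4.2.2, Rem.~4.2.5, Prop.~4.2.12(b)]{NPapanikolas21} with the reader referred there --- and your self-contained argument (non-archimedean estimates justifying the rearrangement of the double series and the geometric summation $\sum_i (M^{(j)})^{-i-1}t^i=(M^{(j)}-t\Id_s)^{-1}$ for part (i), uniform convergence of the tail on disks for the meromorphic continuation, and the functional equation \eqref{E:FuncEqtn} applied termwise for part (ii)) is essentially the same standard computation carried out in that cited source.
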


\subsection{Biderivations and quasi-periodic functions of \texorpdfstring{$t$}{t}-modules}\label{SS:bids} 
Our purpose now is to introduce the theory of biderivations for $t$-modules and quasi-periodic functions attached to them. We note that such theory for Drinfeld $\bA$-modules were developed by Anderson, Deligne, Gekeler and Yu for Drinfeld $\bA$-modules (\cite{Gekeler89}, \cite{Yu90}) and extended by Brownawell and Papanikolas \cite{BP02} to a more general setting. We closely follow \cite{BP02} for our exposition. Throughout this subsection, we assume that $L$ is an algebraically closed field in $\CC_{\infty}$ containing $K$.

\begin{definition}Let $G=(\mathbb{G}_{a/L}^s,\varphi)$ be a $t$-module. \textit{A $\varphi$-biderivation defined over $L$} is an $\mathbb{F}_q$-linear map $\delta:\bA \rightarrow \Mat_{1\times s}(L[\tau]\tau )$ satisfying
	\[
	\delta(ab) = a(\theta)\delta(b) + \delta(a)\varphi(b), \quad \textup{ } \,  a,b \in \bA.
	\]
\end{definition}
The set of $\varphi$-biderivations forms an $L$-vector space, and we will denote it by $\Der(\varphi)$.

Let $N\in \Mat_s(L)$ be the nilpotent part of $\rd_{\varphi}(t)=\theta \Id_s+N$. We define
\[
N^{\perp}:= \{v \in \Mat_{1\times s}(L)\ \  | vN=0\}.
\]
Consider a vector $U=[U_1,\dots,U_s]\in \Mat_{1\times s}(L[\tau])$ such that $\rd U=[\rd U_1,\dots,\rd U_s]\in N^{\perp}$. This allows us to define the map $\delta^{(U)}:\bA \rightarrow \Mat_{1\times s}(L[\tau]\tau )$ given by 
\[
\delta^{(U)}(a) := U \varphi(a) - a(\theta)U, \ \ a\in \bA.
\]
Observe that
\[
\delta^{(U)}(ab)= a(\theta)(U \varphi(b) - b(\theta)U)+(U\varphi(a)-a(\theta)U)\varphi(b)= a(\theta)\delta^{(U)}(b) + \delta^{(U)}(a)\varphi(b)
\]
for all $a,b\in \bA$ and therefore, $\delta^{(U)}$ indeed forms a $\varphi$-biderivation.  Set $\Der_{\mathrm{si}}(\varphi)=\{\delta^{(U)} | \rd U =0\}$, which is an $L$-subspace of $\Der(\varphi)$, is called the space of \emph{strictly inner $\varphi$-biderivations}. Then, the \emph{de Rham module} for $\varphi$ is $\rH_{\mathrm{DR}}(\varphi)= \Der(\varphi)/\Der_{\mathrm{si}}(\varphi)$. 

Let $\bsz:=[z_1,\dots,z_s]^{\tr}$ be a column vector of $s$-many variables. For a given biderivation $\delta$ defined over $L$, by \cite[Prop.~3.2.1]{BP02}, there exists an $\mathbb{F}_q$-linear entire function $F_{G,\delta}:\CC_{\infty}^s\to \CC_{\infty}$ given by the unique power series 
$F_{G,\delta}(\bsz) = \sum_{h\geq 1} c_{h,1}z_1^{q^h}+ \dots + c_{h,s}z_s^{q^h}  \in \power{L}{z_1, \dots, z_s}$ satisfying
\begin{equation}\label{E:quasper}
	F_{G,\delta}(\rd_{\varphi}(a)\bsz) = a(\theta)F_{G,\delta}(\bsz) + \delta(a)(\Exp_{G}(\bsz)).
\end{equation}

We call this unique function $F_{G,\delta}$ \textit{the quasi-periodic function associated to $\delta$}. For any $\lambda\in \Lambda_{G}\setminus \{0\}$ and a $\varphi$-biderivation $\delta$, we call $F_{G,\delta}(\lambda)$ \textit{the quasi-period of $\delta$ associated to $\lambda$}, and for a general $\bsy \in \Lie(G)(\CC_\infty)$, we call  $F_{G,\delta}(\bsy)$ \textit{the quasi-logarithm of $\delta$ associated to $\bsy$}. As an example, by \cite[Prop. 3.2.2]{BP02}, the quasi-periodic function $F_{G,\delta^{(U)}}$ associated to $\delta^{(U)}$ is given by $F_{G,\delta^{(U)}}(\bsz)=U\Exp_{G}(\bsz)-\rd U\bsz$. 

We finish this subsection with a brief discussion, which will be used later, on quasi-periodic functions and quasi-periods for the Drinfeld $\bA$-module $G_0:=(\GG_{a/\oK}, \phi)$ defined in \eqref{E:DrinfeldDef}. One can see that the map $\delta_{0}:\bA\to L[\tau]\tau $ given by $\delta_{0}(t)=\phi(t)-\theta$ forms a $\phi$-biderivation. Similarly, for each $1\leq i \leq r-1$, the map $\delta_{i}:\bA\to L[\tau]\tau $ given by $\delta_{i}(t)=\tau^i$ also forms a $\phi$-biderivation. By Drinfeld \cite{Dri74}, the $\bA$-module $\Lambda_{G_0}$ is free of rank $r$ and thus, we can fix an $\bA$-basis $\{\lambda_1,\dots,\lambda_r\}$ for $\Lambda_{G_0}$. Note that the above discussion in fact implies that $F_{G_0,\delta_{0}}(z)=\Exp_{\phi}(z)-z$ for any $z\in \CC_{\infty}$ and hence, $F_{G_0,\delta_{0}}(\lambda_j)=-\lambda_j$ for any $1\leq j \leq r$. Furthermore, by \cite[pg. 194]{Gekeler89} (see also \cite[Sec. 4.2]{Pellarin08}), we have
\begin{equation}\label{E:quasiperiod}
	F_{G_0,\delta_{i}}(\lambda_j)=f_{\lambda_j}^{(i)}(\theta).
	\end{equation}
We finally set the period matrix $\mathcal{P}$ of $G_0$ to be 
\begin{equation}\label{E:periodmat}
\mathcal{P} := \begin{bmatrix}
-\lambda_1 & F_{G_0,\delta_{1}}(\lambda_1) & \dots & F_{G_0,\delta_{r-1}}(\lambda_1)\\
\vdots& \vdots &  & \vdots\\
-\lambda_r & F_{G_0,\delta_{1}}(\lambda_r) & \dots & F_{G_0,\delta_{r-1}}(\lambda_r)
\end{bmatrix}\in \Mat_{r}(\CC_{\infty}).
\end{equation}
By the Legendre relation of Anderson (see \cite[Sec. 8.1]{GP19}), there exists $\xi \in \oK^{\times}$ such that
\begin{equation}\label{E:periodLeg}
\det(\mathcal{P}) = \frac{\widetilde{\pi}}{\xi}.
\end{equation}

\subsection{\texorpdfstring{$t$}{t}-motives and system of difference equations}\label{SS:t-motives}
This subsection aims to give a rapid introduction to the main tools used in Papanikolas's theory. We refer the reader to \cite{BP20,CPY19,HartlJuschka16,P08} for further details.

Recall that $\oK$ is the algebraic closure of $K$ in $\CC_{\infty}$ and  $\oK(t)$ is the fraction field of the polynomial ring $\oK[t]$. 
We define the non-commutative ring 
\[
\oK(t)[\sigma,\sigma^{-1}]:=\Big\{\sum_{i=-n}^m g_i\sigma^i \  |  \ g_i\in \oK(t), \text{ } n,m\in \ZZ_{\geq 0} \Big\}
\]
 subject to the condition
$
\sigma g=g^{(-1)}\sigma$ for all $ g\in \oK(t)$.
We define $\oK[t,\sigma]$ to be the subset of  $\oK(t)[\sigma,\sigma^{-1}]$  containing elements only of the form $\sum_{i=0}^m f_i\sigma^i$ where $f_i\in \oK[t]$. For $k,\ell\geq \ZZ_{\geq 1}$, we also set $\Mat_{k\times \ell}(\oK[\sigma])$ to be set of elements of the form $\sum_{i=0}^m g_i\sigma^i$ with $g_i\in \Mat_{k\times \ell}(\oK)$.

\begin{definition}
	\begin{itemize}
\item[(i)] A \textit{pre-$t$-motive} $H$ is a left $\oK(t)[\sigma, \sigma^{-1}]$-module that is finite dimensional over $\oK(t)$.
\item[(ii)] We define the pre-$t$-motive $\textbf{1}_{\sP}:=\oK(t)$ which is  equipped with the left $\oK[\sigma,\sigma^{-1}]$-module structure given by 
$
\sigma \cdot f=f^{(-1)}$ for any $f\in \textbf{1}_{\sP}$.
\item[(iii)]For any pre-$t$-motive $H_1$ and $H_2$, the pre-$t$-motive $H_1\otimes H_2:=H_1\otimes_{\oK(t)}H_2$ on which $\sigma$ acts diagonally is called \textit{the tensor product of  $H_1$ and $H_2$}.
\item[(iv)] For any pre-$t$-motive $H$, we define \textit{the dual of $H$} to be the pre-$t$-motive $H^{\vee}$ given by $\smash{H^{\vee}:=\Hom_{\oK(t)}(H,\textbf{1}_{\sP})}$.
\end{itemize}
\end{definition}

The category $\sP$ of pre-$t$-motives is defined so that the objects are given by pre-$t$-motives and the morphisms are given by left $\oK(t)[\sigma, \sigma^{-1}]$-module homomorphisms. We denote the trivial object by $\textbf{1}_{\sP}$. The category $\sP$ is closed under taking direct sums and for any pre-$t$-motive $H$ and a positive integer $m$, we denote the direct sum of $m$-many $H$ by $H^m$. We set $H^\dagger:= \LL \otimes_{\oK(t)}H$ and equip it with the left $\oK(t)[\sigma,\sigma^{-1}]$-module structure given by
\[
\sigma\cdot (g\otimes h)=g^{(-1)}\otimes \sigma h, \ \ g\in \LL, \ \ h\in H.
\]
We also let $H^{\text{Betti}}$ be the set of elements of $H^{\dagger}$ fixed by $\sigma$. Note that it is a finite dimensional $\mathbb{F}_q(t)$-vector space \cite[Prop. 3.3.8]{P08}. 

Let $\Phi \in \GL_r(\oK(t))$ be the matrix representing the $\sigma$-action on a fixed $\oK(t)$-basis of a pre-$t$-motive $H$ which is free of rank $r$ over $\oK(t)$. We say that $H$ is \emph{rigid analytically trivial} if there exists a matrix $\Psi \in \GL_r(\LL)$ such that
\[ \Psi^{(-1)} = \Phi \Psi.\]
Furthermore we call $\Psi$  \emph{a rigid analytic trivialization of $H$}.
For a rigid analytically trivial pre-$t$-motive $H$  with a $\oK(t)$-basis $\bn\in \Mat_{r\times 1}(H)$ and a rigid analytic trivialization $\Psi\in \GL_r(\LL)$, the entries of $\Psi^{-1}\bn$ form an $\FF_q(t)$-basis for $H^{\text{Betti}}$ (\cite[Thm. 3.3.9(b)]{P08}) and moreover, by \cite[Thm. 3.3.15]{P08}, the category of rigid analytically trivial pre-$t$-motives forms a neutral Tannakian category over $\FF_q(t)$ with the fiber functor $H \mapsto H^{\text{Betti}}$. 

\begin{definition}
    An \emph{$\bA$-finite dual $t$-motive $\sH$ over $\oK$} is a left $\oK[t, \sigma]$-module which is free and finitely generated over both $\oK[t]$ and $\oK[\sigma]$ with the property that the determinant of the matrix representing the $\sigma$-action on $\sH$ is equal to $c(t-\theta)^n$ for some $c\in \oK^{\times}$ and $n\in \ZZ_{>0}$.
\end{definition}

The category of $\bA$-finite dual $t$-motives is defined so that the objects are given by $\bA$-finite dual $t$-motives and the morphisms are given by left $\oK[t,\sigma]$-module homomorphisms. Let $\sH$ and $\sH'$ be two $\bA$-finite dual $t$-motives. 		
The set of morphisms $\sH\to \sH'$ of $\bA$-finite dual $t$-motives is denoted by $\Hom_{\oK[t,\sigma]}(\sH,\sH')$ and furthermore, we fix the notation $\End_{\oK[t,\sigma]}(\sH):=\Hom_{\oK[t,\sigma]}(\sH,\sH)$ for the endomorphism ring of $\sH$.

Let $\Phi \in \Mat_r(\oK[t]) \cap \GL_r(\oK(t))$ be the matrix representing the $\sigma$-action on $\sH$ for a given $\oK[t]$-basis. If there exists a matrix $\Psi \in \GL_r(\TT)$ so that $\Psi^{(-1)} = \Phi \Psi$, then $\sH$ is said to be {\emph{rigid analytically trivial}}. Note that by \cite[Prop. 3.1.3]{ABP04}, the entries of $\Psi$ are regular at $t=\theta$ as a function of $t$.

We set $H :=\smash{\oK(t) \otimes_{\oK[t]} \sH}$ and define $\sigma(f \otimes h) := f^{(-1)} \otimes \sigma h$ for any $f\in \oK(t)$ and $h\in \sH$. It is a left  $\oK(t)[\sigma,\sigma^{-1}]$-module which is finite dimensional over $\oK(t)$. Hence, the assignment $\sH \mapsto H$ is a functor from the category of $\bA$-finite dual $t$-motives to the category $\sP$ of pre-$t$-motives. 

Consider the $\bA$-finite dual $t$-motives $\sH_1$ and $\sH_2$ and the pre-$t$-motives $\sH_1 =\smash{\oK(t) \otimes_{\oK[t]} \sH_1}$ and $H_2=\smash{\oK(t) \otimes_{\oK[t]} \sH_2}$. Let $\Hom_{\oK(t)[\sigma, \sigma^{-1}]}(H_1, H_2)$  be the set of left $\oK(t)[\sigma, \sigma^{-1}]$-module homomorphisms from $H_1$ to $H_2$. By  \cite[Prop.~3.4.5]{P08}, the natural map
\begin{equation}\label{E:homsdualpre}
\Hom_{\oK[t, \sigma]}(\sH_1, \sH_2) \otimes_{\bA}\FF_q(t) \rightarrow \Hom_{\oK(t)[\sigma, \sigma^{-1}]}(H_1, H_2) 
\end{equation}
is an isomorphism of $\FF_q(t)$-vector spaces.

We define \textit{the category  $\sT$ of $t$-motives} to be the strictly full Tannakian subcategory generated by the images of rigid analytically trivial $\bA$-finite dual $t$-motives under the assignment $\sH \mapsto H$ which lie in the category of rigid analytically trivial pre-$t$-motives \cite[\S3.4.10]{P08}. Morphisms between $t$-motives are left $\oK(t)[\sigma, \sigma^{-1}]$-module homomorphisms. Let $H$ and $H'$ be two $t$-motives. We set $\Hom_\sT(H,H')$ to be the set of morphisms $H\to H'$ of $t$-motives and furthermore, we fix the notation $\End_\sT(H):=\Hom_\sT(H,H)$ for the endomorphism ring of $H$. 

Let $\sT_H$ denote the strictly full Tannakian subcategory of $\sT$ generated by a $t$-motive $H$. As $\sT_H$ is a neutral Tannakian category over $\FF_q(t)$,  there is an affine group scheme $\Gamma_H$ over $\mathbb{F}_q(t)$ which is a subgroup of the $\mathbb{F}_q(t)$-group scheme $\GL_{s/\FF_q(t)}$ of $s\times s$ invertible matrices so that $\sT_H$ is equivalent to the category $\Rep(\Gamma_H, \FF_q(t))$ of finite dimensional representations of $\Gamma_H$ over $\FF_q(t)$ \cite[\S3.5]{P08}. Furthermore, we call $\Gamma_H$ \textit{the Galois group of $H$}.\par

\begin{theorem}\cite[Thm. 1.1.7]{P08} \label{T:TannakianMain}
Let $H$ be a $t$-motive and let $\Gamma_H$ be its Galois group. Suppose that $\Phi \in \GL_r(\oK(t)) \cap \Mat_r(\oK[t])$ represents the multiplication by $\sigma$ on $H$ and that $\det(\Phi) = c(t-\theta)^n$ where $c \in \oK^\times$ and $n \in \ZZ_{>0}$. Let $\Psi \in \GL_r(\TT)$ be a rigid analytic trivialization of $H$ and $\oK(\Psi(\theta))$ be the field generated by the entries of $\Psi(\theta)$. Then,
\[\trdeg_{\oK} \oK(\Psi(\theta)) = \dim \Gamma_H.
\]
\end{theorem}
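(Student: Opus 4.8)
The final statement is Papanikolas's theorem \cite[Thm.~1.1.7]{P08}; here is the proof strategy I would follow. The plan is to route the identity $\trdeg_{\oK}\oK(\Psi(\theta)) = \dim \Gamma_H$ through a concretely defined linear algebraic group $\Gamma_\Psi$ over $\FF_q(t)$ attached to the $\sigma$-difference system $\Psi^{(-1)} = \Phi \Psi$, and to establish separately that (A) $\Gamma_H \cong \Gamma_\Psi$ as $\FF_q(t)$-group schemes and (B) $\dim \Gamma_\Psi = \trdeg_{\oK}\oK(\Psi(\theta))$. To set up $\Gamma_\Psi$, I would work inside $\LL$ with the extension $\oK(t) \subseteq \oK(t)(\Psi)$ generated by the entries of $\Psi$, equipped with the operator $\sigma$ (so that $\sigma g = g^{(-1)}$). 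Since $\LL^\sigma = \FF_q(t)$ and $\Psi$ is a fundamental solution matrix, the extension $\oK(t)(\Psi)/\oK(t)$ is a Picard--Vessiot extension for $\Psi^{(-1)} = \Phi\Psi$ with field of constants $\FF_q(t)$ --- this is where rigid analytic triviality of $H$, equivalently $\dim_{\FF_q(t)} H^{\mathrm{Betti}} = \dim_{\oK(t)} H = r$, gets used (see \cite[\S4]{P08}). I would then let $\Gamma_\Psi$ be the associated difference Galois group, realized as a closed subgroup scheme of $\GL_{r/\FF_q(t)}$ via $\gamma \mapsto \Psi^{-1}\gamma(\Psi)$; concretely, $\Gamma_\Psi$ is the subscheme of $\GL_r$ cut out by the algebraic relations over $\FF_q(t)$ among the entries of $\Psi_1^{-1}\Psi_2$, where $\Psi_1,\Psi_2$ are two independent generic copies of $\Psi$.

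For (A), I would exploit that the single matrix $\Psi$ simultaneously furnishes a rigid analytic trivialization for \emph{every} object of $\sT_H$: each such object is built from $H$ by sub-objects, quotients, direct sums, tensor products, and duals, and its trivialization is obtained from $\Psi$ by the matching linear-algebra operations. From this I would deduce that the functor sending an object of $\sT_H$ to the natural $\Gamma_\Psi$-module structure on its Betti realization is an equivalence of neutral Tannakian categories $\sT_H \simeq \Rep(\Gamma_\Psi, \FF_q(t))$ (the content of \cite[\S4.5]{P08}), and comparing this with the defining equivalence $\sT_H \simeq \Rep(\Gamma_H, \FF_q(t))$ would yield $\Gamma_H \cong \Gamma_\Psi$ and in particular $\dim \Gamma_H = \dim \Gamma_\Psi$.

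For (B), I would compute $\dim \Gamma_\Psi$ in two moves. By the dimension theory of Picard--Vessiot extensions, $\dim \Gamma_\Psi = \trdeg_{\oK(t)}\oK(t)(\Psi)$: the affine $\oK(t)$-variety with coordinate ring $\oK(t)[\Psi, 1/\det\Psi]$ is a torsor under $\Gamma_\Psi \times_{\FF_q(t)} \oK(t)$, hence of that dimension. It then remains to prove the specialization identity $\trdeg_{\oK(t)}\oK(t)(\Psi) = \trdeg_{\oK}\oK(\Psi(\theta))$. By \cite[Prop.~3.1.3]{ABP04} the entries of $\Psi$ are regular at $t=\theta$, so $\Psi(\theta) \in \GL_r(\oK)$; specializing a transcendence basis of $\oK(t)(\Psi)/\oK(t)$ together with the relations it satisfies yields the inequality $\trdeg_{\oK}\oK(\Psi(\theta)) \leq \trdeg_{\oK(t)}\oK(t)(\Psi)$. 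For the reverse inequality --- that passing to $t=\theta$ introduces no algebraic relations among the entries of $\Psi(\theta)$ beyond specializations of relations already valid over $\oK(t)$ --- I would invoke the Anderson--Brownawell--Papanikolas linear independence criterion \cite{ABP04}, which lifts an $\oK$-linear relation at $t=\theta$ (and, after passing to the coordinate ring of $\Gamma_\Psi$, a polynomial relation) to one holding identically in $t$. This is precisely where the hypothesis $\det \Phi = c(t-\theta)^n$ enters, as it forces $\Phi^{-1}$ to have poles only along the $\sigma$-orbit of $t=\theta$, which is the denominator condition the criterion requires. Chaining the equalities of the last two paragraphs then gives $\trdeg_{\oK}\oK(\Psi(\theta)) = \dim \Gamma_H$.

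The real difficulty is concentrated in (B): step (A) and the Picard--Vessiot setup are formal, if substantial, applications of difference-Galois theory and Tannakian duality, whereas the specialization identity is the genuine arithmetic input and is the only place where the precise shape of $\det \Phi$ matters. I expect the ABP-type lifting of relations from $t = \theta$ to generic $t$ to be the main obstacle in making the argument precise.
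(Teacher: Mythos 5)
Your outline is accurate and follows essentially the same route as the source this paper cites for the statement (the paper itself imports \cite[Thm.~1.1.7]{P08} without proof): Papanikolas proves it exactly by (A) identifying $\Gamma_H$ with the difference Galois group $\Gamma_\Psi$ via the Tannakian equivalence \cite[Thm.~4.5.10]{P08}, and (B) combining the torsor result $\dim\Gamma_\Psi=\trdeg_{\oK(t)}\oK(t)(\Psi)$ with the specialization identity $\trdeg_{\oK(t)}\oK(t)(\Psi)=\trdeg_{\oK}\oK(\Psi(\theta))$, the latter resting on the ABP linear independence criterion and the hypothesis $\det\Phi=c(t-\theta)^n$, just as you indicate. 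No gaps to flag at the level of strategy; the genuine work is, as you say, concentrated in the ABP-based specialization step.
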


We briefly explain the theory of difference equations and the Galois groups attached to them for the triple $(\FF_q(t),\oK(t),\LL)$, which is \textit{$\sigma$-admissible} in the sense of \cite[Sec. 4.1.1]{P08}. For more details on the subject, we refer the reader to \cite[Sec. 4]{P08}. 

Fix $\Phi \in \GL_r(\oK(t))$ and suppose that $\Psi= (\Psi_{ij}) \in \GL_r(\LL)$ satisfies
\[
\Psi^{(-1)} = \Phi \Psi.
\]

We define a $\oK(t)$-algebra map $ \nu : \oK(t)[X, 1/\det X] \rightarrow \LL$ by setting $\nu(X_{ij}) := \Psi_{ij}$, where $X = (X_{ij})$ is an $r \times r$-matrix of independent variables. We define  $\Sigma := \im \nu = \oK(t)[\Psi, 1/\det \Psi] \subseteq \LL$ and consider $Z_\Psi  = \text{Spec} \ \Sigma$. Note that, by the construction, $Z_\Psi$ is the smallest closed subscheme of $\GL_{r/\oK(t)}$ such that $\Psi \in Z_\Psi(\LL)$. \par

Let $\smash{\Psi_1=(\Psi_1)_{ij}, \Psi_2=(\Psi_2)_{ij} \in \GL_r(\LL \otimes_{\oK(t)}\LL)}$ be so that $\smash{(\Psi_1)_{ij} = \Psi_{ij} \otimes 1}$ and $(\Psi_2)_{ij} = 1 \otimes \Psi_{ij}$. We set $\widetilde{\Psi} := (\widetilde{\Psi}_{ij}):=\Psi_1^{-1}\Psi_2 \in \GL_r(\LL \otimes_{\oK(t)} \LL)$. We define an $\FF_q(t)$-algebra map $ \mu : \smash{\FF_q(t)[X, 1/\det X] \rightarrow \LL \otimes_{\oK(t)} \LL}$ by setting $\smash{\mu(X_{ij}) := \widetilde{\Psi}_{ij}}$. We further  consider $\Gamma_\Psi:= \text{Spec} (\im \mu)$. Then $\Gamma_\Psi$ is the smallest closed subscheme of $\GL_{r/\mathbb{F}_q(t)}$ such that $\widetilde{\Psi} \in \Gamma_\Psi(\LL \otimes_{\oK(t)} \LL)$. We call $\Gamma_\Psi$ \textit{the Galois group of the system $\smash{\Psi^{(-1)}} = \Phi \Psi$}.

\begin{theorem}\cite[Sec. 4.3, Sec. 4.3.2, Thm. 4.5.10]{P08} \label{T:Pap}
Let $H$ be a $t$-motive, $\Gamma_H$ be its Galois group and $\Phi \in \GL_r(\oK(t))$ represent the multiplication by $\sigma$ on a $\oK(t)$-basis of $H$. Let $\Psi \in \GL_r(\LL)$ be such that $\Psi^{(-1)} = \Phi \Psi$. Then, the $\FF_q(t)$-scheme $\Gamma_\Psi$ is absolutely irreducible and smooth over the algebraic closure $\smash{\overline{\FF_q(t)}}$ of $\mathbb{F}_q(t)$. Moreover, $\Gamma_\Psi \cong \Gamma_H$ over $\FF_q(t)$. 
\end{theorem}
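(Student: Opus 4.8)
The plan is to prove this via the Picard--Vessiot formalism: realize $\Gamma_\Psi$ as the automorphism group scheme of an explicit difference-algebra torsor, read off smoothness and absolute irreducibility from properties of that torsor, and then identify $\Gamma_\Psi$ with the Tannakian group $\Gamma_H$ through the fiber functor $H'\mapsto(H')^{\text{Betti}}$. First I would study the $\oK(t)$-algebra $\Sigma:=\oK(t)[\Psi,1/\det\Psi]\subseteq\LL$ and the scheme $Z_\Psi=\operatorname{Spec}\Sigma$. Since $\Sigma$ is a subring of the field $\LL$ it is a domain, so $Z_\Psi$ is integral, of finite type and faithfully flat over $\oK(t)$; the relation $\Psi^{(-1)}=\Phi\Psi$ with $\Phi\in\GL_r(\oK(t))$ shows that $\Sigma$ is $\sigma$-stable. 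Using the $\sigma$-admissibility of the triple $(\FF_q(t),\oK(t),\LL)$ together with the hypothesis $\det\Phi=c(t-\theta)^n$, one checks that the $\sigma$-invariants of $\operatorname{Frac}(\Sigma)$ are exactly $\FF_q(t)$, so that $\Sigma$ is a Picard--Vessiot ring for the system $\Psi^{(-1)}=\Phi\Psi$. The decisive structural step is to show that $Z_\Psi$ is a torsor under $\Gamma_\Psi\times_{\FF_q(t)}\oK(t)$, i.e.\ that the morphism $Z_\Psi\times_{\oK(t)}\bigl(\Gamma_\Psi\times_{\FF_q(t)}\oK(t)\bigr)\to Z_\Psi\times_{\oK(t)}Z_\Psi$ induced by the matrix $\widetilde{\Psi}=\Psi_1^{-1}\Psi_2$ is an isomorphism. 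Concretely this amounts to proving that the relations among the $\widetilde{\Psi}_{ij}$ over $\FF_q(t)$, pulled back to $\LL\otimes_{\oK(t)}\LL$, together with the defining relations of $Z_\Psi$, generate all of the relations defining $Z_\Psi\times_{\oK(t)}Z_\Psi$; this requires controlling $\sigma$-invariance inside $\LL\otimes_{\oK(t)}\LL$, a ring which is not a domain, and this is where I expect the real work to lie.

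Granting the torsor structure, I would deduce smoothness and absolute irreducibility directly, since $\mathrm{char}\,\FF_q(t)=p>0$ makes Cartier's theorem unavailable. The scheme $Z_\Psi$ is integral and faithfully flat over $\oK(t)$, and fppf-locally over $\operatorname{Spec}\oK(t)$ the torsor $Z_\Psi$ becomes isomorphic to $\Gamma_\Psi\times_{\FF_q(t)}\oK(t)$; hence $\Gamma_\Psi\times_{\FF_q(t)}\oK(t)$ inherits geometric reducedness and geometric connectedness from $Z_\Psi$, once one knows $Z_\Psi$ is \emph{geometrically} integral over $\oK(t)$ --- which follows from the description of $\LL$ as the fraction field of the Tate algebra $\TT$ and the equality of its $\sigma$-constants with $\FF_q(t)$. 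Since a reduced finite-type group scheme over a field is smooth, and since smoothness and geometric irreducibility descend from $\oK(t)$ to $\FF_q(t)$, one concludes that $\Gamma_\Psi$ is smooth and absolutely irreducible over $\FF_q(t)$.

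For the isomorphism $\Gamma_\Psi\cong\Gamma_H$, I would pass to the Tannakian side. By \cite[Thm.~3.3.9(b), Thm.~3.3.15, Prop.~3.4.5]{P08}, $H'\mapsto(H')^{\text{Betti}}$ is a fiber functor on $\sT_H$ with $\Gamma_H=\Aut^{\otimes}$ of it, and the rigid analytic trivialization $\Psi$ provides the comparison isomorphism between the $\oK(t)$-realization of $H$ and $(H)^{\text{Betti}}\otimes_{\FF_q(t)}\oK(t)$. The group $\Gamma_\Psi$ acts tensor-functorially on $\oK(t)$-realizations through its action on $\widetilde{\Psi}$, hence on every $(H')^{\text{Betti}}$, giving a closed immersion $\Gamma_\Psi\hookrightarrow\GL\bigl((H)^{\text{Betti}}\bigr)$ whose image lies in $\Gamma_H$; the reverse inclusion follows because \cite[Prop.~3.4.5]{P08} shows that every relation cutting out $\Gamma_H$ is a $\sigma$-relation among the entries of $\Psi$, hence lies in the ideal defining $\Gamma_\Psi$. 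Comparing coordinate rings then yields $\Gamma_\Psi\cong\Gamma_H$ over $\FF_q(t)$. The main obstacle throughout remains the torsor isomorphism of the first paragraph, since the base ring $\LL\otimes_{\oK(t)}\LL$ is badly behaved; everything else is formal bookkeeping within the Picard--Vessiot and Tannakian frameworks.
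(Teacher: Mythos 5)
You should first note that this statement is not proved in the paper at all: it is imported verbatim from Papanikolas \cite[\S4.3, \S4.3.2, Thm.~4.5.10]{P08}, so there is no internal argument to compare with, and what you have written is a reconstruction of the proof in the cited source. Your outline does capture the architecture of that proof: the Picard--Vessiot-style analysis of $\Sigma=\oK(t)[\Psi,1/\det\Psi]$ for the $\sigma$-admissible triple $(\FF_q(t),\oK(t),\LL)$, the theorem that $Z_\Psi$ is a torsor under $\Gamma_\Psi\times_{\FF_q(t)}\oK(t)$, and the identification of $\Gamma_\Psi$ with the Tannakian group $\Gamma_H$ via the Betti fiber functor. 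Two inaccuracies, though: the hypothesis $\det\Phi=c(t-\theta)^n$ is not part of this theorem and plays no role here (it matters for Theorem~\ref{T:TannakianMain}, where $\Psi$ is specialized at $t=\theta$); and \cite[Prop.~3.4.5]{P08} is the Hom-comparison between dual $t$-motives and pre-$t$-motives, so it cannot give the inclusion $\Gamma_H\subseteq\Gamma_\Psi$ --- in \cite{P08} the isomorphism $\Gamma_\Psi\cong\Gamma_H$ is obtained by producing an equivalence between $\sT_H$ and the representation category of $\Gamma_\Psi$ compatible with the fiber functor, not by comparing defining ideals inside $\GL\bigl((H)^{\text{Betti}}\bigr)$.

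The genuine gap is in your smoothness/irreducibility step. You reduce everything to the claim that $Z_\Psi$ is geometrically integral over $\oK(t)$ and assert that this ``follows from the description of $\LL$ as the fraction field of the Tate algebra and the equality of its $\sigma$-constants with $\FF_q(t)$.'' Only half of this can be salvaged cheaply: since $t$ is not a $p$-th power in $\LL$ (compare $t$-adic valuations), $\LL/\oK(t)$ is a separable field extension, hence $\mathrm{Frac}(\Sigma)/\oK(t)$ is separable and $Z_\Psi$ is geometrically \emph{reduced}; combined with the trivialization $\Gamma_\Psi\times_{\FF_q(t)}\LL\cong Z_\Psi\times_{\oK(t)}\LL$ and a further base change to an algebraic closure of $\LL$, this yields smoothness. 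But geometric \emph{irreducibility} of $Z_\Psi$ does not follow from $\Sigma$ being a domain, because $\oK(t)$ is not separably algebraically closed in $\LL$: for odd $p$ the series $(1-t/\theta)^{1/2}$ lies in $\TT$, and for $p=2$ the Artin--Schreier root $\sum_{i\geq 0}(t/\theta)^{2^i}$ does, so $\mathrm{Frac}(\Sigma)$ could a priori contain a nontrivial finite separable extension of $\oK(t)$ and $Z_\Psi$ could become reducible after base change. Indeed, in general difference Galois theory the group need not be connected, so the absolute irreducibility here requires a genuine argument specific to this setting (in \cite{P08} it is extracted from the torsor theorem together with the $\sigma$-structure and minimality of $Z_\Psi$, not from integrality of $\Sigma$ alone). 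So, besides the torsor isomorphism that you rightly flag as the main technical work, the absolute-irreducibility step is also not ``formal bookkeeping,'' and your proposal as written does not supply it.
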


\subsection{\texorpdfstring{$\bA$}{A}-finite dual \texorpdfstring{$t$}{t}-motives and \texorpdfstring{$t$}{t}-modules}\label{S:AFdual}
We consider the map $*:\oK[\tau]\to \oK[\sigma]$ given by 
$
\big(\sum a_i\tau^i\big)^{*}=\sum a_i^{(-i)}\sigma^i.
$
For any matrix $B=(B_{ij})\in \Mat_{\ell\times m}(\oK[\tau])$, we also define $B^{*}=(B^{*}_{ij})\in \Mat_{m\times \ell}(\oK[\sigma])$ such that $B^{*}_{ij}:=(B_{ji})^{*}$. Observe that for any $B\in \Mat_{\ell_1\times \ell_2}(\oK[\tau])$ and $ D\in \Mat_{\ell_2\times \ell_3}(\oK[\tau])$, we have $(BD)^{*}=D^{*}B^{*}$.

Given a $t$-module $G=(\mathbb{G}_{a/\oK}^{d},\varphi)$, we set $\sH_G:= \Mat_{1\times d}(\oK[\sigma])$ and equip it with a $\oK[t]$-module structure given by
\begin{equation*}\label{E:identification}
ct \cdot h:= ch\varphi(t)^{*}, \ \ c\in \oK, \ \ h\in \sH_G.
\end{equation*}
If $\sH_G$ is an $\bA$-finite dual $t$-motive, then we call $G$ \textit{an $\bA$-finite $t$-module of dimension $d$}.

Let $G'=\smash{(\mathbb{G}_{a/\oK}^{d'},\varphi')}$ be another $t$-module and set $\sH:=\sH_G$ and $\sH':=\sH_{G'}$. For any $m\in \ZZ_{\geq 1}$ and $1\leq j \leq m$, let $s_{j,m}\in \Mat_{1\times m}(\mathbb{F}_q)$ be the vector whose $j$-th entry is 1 and the other entries are zero. Then, clearly the set  $\{s_{1,d}, \dots, s_{d,d}\} \subset \Mat_{1\times d}(\oK[\sigma])$ ( $\{s_{1,d'}, \dots, s_{d',d'}\} \subset \Mat_{1\times d'}(\oK[\sigma])$ resp.) forms a $\oK[\sigma]$-basis for $\sH$ ( $\sH'$ resp.). 

Let $\rP: G \rightarrow G'$ be a morphism of $t$-modules defined over $\oK$, that is, we have an element $\rP \in \Mat_{d' \times d}(\oK[\tau])$ satisfying
$
\rP\varphi(t) = \varphi'(t) \rP.
$		 
We define a map
$
\epsilon_{\rP}: \sH \rightarrow \sH'
$
given by 
\[
\epsilon_{\rP}(h)=h\rP^{*}, \ \ h\in \sH.
\]
Clearly, it is $\oK[\sigma]$-linear and furthermore, note that 
\begin{equation*}
    t \cdot \epsilon_{\rP}(h) = t \cdot h\rP^{*} = h \rP^{*} \varphi'(t)^{*} = h \varphi(t)^{*} \rP^{*} = (t \cdot h) \rP^{*} = \epsilon_{\rP}(t \cdot h).
\end{equation*}
Thus, $\epsilon_{\rP}$ commutes with $t$ and we have proved the following lemma.

\begin{lemma} Let $G$ and $G'$ be $\bA$-finite $t$-modules. Then for every morphism $\rP: G \rightarrow G'$, the map $\epsilon_{\rP}: \sH \rightarrow \sH'$ is a morphism of $\bA$-finite dual $t$-motives, that is, a left $\oK[t, \sigma]$-module homomorphism. 
\end{lemma}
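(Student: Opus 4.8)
The plan is to verify directly that the $\oK[\sigma]$-linear map $\epsilon_{\rP}\colon \sH\to\sH'$, $h\mapsto h\rP^{*}$, intertwines the $\oK[t]$-module structures on $\sH$ and $\sH'$; together with its $\oK[\sigma]$-linearity this is precisely the assertion that $\epsilon_{\rP}$ is a morphism of $\bA$-finite dual $t$-motives, since $\sH=\sH_G$ and $\sH'=\sH_{G'}$ are $\bA$-finite dual $t$-motives by hypothesis. The only computation that carries content is the behaviour of the anti-automorphism $*$: from the morphism relation $\rP\varphi(t)=\varphi'(t)\rP$ in $\Mat_{d'\times d}(\oK[\tau])$ one obtains, by applying $*$ and using $(BD)^{*}=D^{*}B^{*}$, the identity
\[
\varphi(t)^{*}\rP^{*}=\rP^{*}\varphi'(t)^{*}
\]
in $\Mat_{d\times d'}(\oK[\sigma])$.

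First I would dispatch $\oK[\sigma]$-linearity: since $\sH=\Mat_{1\times d}(\oK[\sigma])$ carries its $\oK[\sigma]$-module structure by left multiplication and $\epsilon_{\rP}$ is right multiplication by the fixed matrix $\rP^{*}\in\Mat_{d\times d'}(\oK[\sigma])$, additivity is immediate and $\epsilon_{\rP}(ch)=c\,\epsilon_{\rP}(h)$ for $c\in\oK[\sigma]$ follows from associativity of matrix multiplication over $\oK[\sigma]$. Next I would check compatibility with the action of $t$: for $h\in\sH$, recalling $t\cdot h=h\varphi(t)^{*}$,
\begin{align*}
t\cdot\epsilon_{\rP}(h)&=(h\rP^{*})\varphi'(t)^{*}=h\bigl(\rP^{*}\varphi'(t)^{*}\bigr)=h\bigl(\varphi(t)^{*}\rP^{*}\bigr)\\
&=(h\varphi(t)^{*})\rP^{*}=(t\cdot h)\rP^{*}=\epsilon_{\rP}(t\cdot h),
\end{align*}
where the middle equality is the identity from the first step. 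Hence $\epsilon_{\rP}$ commutes with the actions of both $t$ and $\sigma$, so it is a left $\oK[t,\sigma]$-module homomorphism, as claimed.

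The argument is entirely formal and presents no genuine obstacle; the only point that requires attention — which is why I would isolate the identity $\varphi(t)^{*}\rP^{*}=\rP^{*}\varphi'(t)^{*}$ at the outset — is that $*$ reverses the order of products, so one must consistently keep $\rP^{*}$ on the right-hand side and read the transposed morphism relation in the correct direction. No input beyond the definitions and the morphism condition on $\rP$ is needed.
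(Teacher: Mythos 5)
Your proof is correct and is essentially the paper's own argument: the paper also verifies that right multiplication by $\rP^{*}$ is $\oK[\sigma]$-linear and commutes with the $t$-action via the identity $\varphi(t)^{*}\rP^{*}=\rP^{*}\varphi'(t)^{*}$ obtained by applying the product-reversing map $*$ to $\rP\varphi(t)=\varphi'(t)\rP$. Nothing is missing.
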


Now, let $d'=d$. It is easy to check by using the construction of $\epsilon_{\rP}$ that the assignment $G \mapsto \sH_{G}$ forms a functor from the category of $\bA$-finite $t$-modules of dimension $d$ over $\oK$ to the category of $\bA$-finite dual $t$-motives which are of rank $d$ over $\oK[\sigma]$. 

\begin{proposition}[{cf. \cite[Thm. 2.5.11]{HartlJuschka16}}]\label{P:fullyfaithful}
 The functor $G\mapsto \sH_{G}$ from the category of $\bA$-finite $t$-modules of dimension $d$ defined over $\oK$ to the category of $\bA$-finite dual $t$-motives of rank $d$ over $\oK[\sigma]$ is fully faithful.
\end{proposition}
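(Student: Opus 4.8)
The plan is to show that the functor $G \mapsto \sH_G$ is injective on morphisms (faithful) and that every morphism of $\bA$-finite dual $t$-motives $\sH_G \to \sH_{G'}$ arises as $\epsilon_{\rP}$ for a unique $\rP \in \Mat_{d\times d}(\oK[\tau])$ with $\rP\varphi(t) = \varphi'(t)\rP$ (full). Faithfulness is essentially immediate: the assignment $\rP \mapsto \rP^{*}$ is injective on $\Mat_{d\times d}(\oK[\tau])$ since $*$ is a bijection on $\oK[\tau]$ (coefficientwise, $\sum a_i \tau^i \mapsto \sum a_i^{(-i)}\sigma^i$ is invertible, as $c \mapsto c^{(-i)}$ is an automorphism of $\oK$), so $\epsilon_{\rP}(s_{j,d}) = s_{j,d}\rP^{*}$ reads off the $j$-th row of $\rP^{*}$ and hence determines $\rP$. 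The content of the proposition is therefore fullness.

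For fullness, I would start from an arbitrary left $\oK[t,\sigma]$-module homomorphism $f \colon \sH_G \to \sH_{G'}$. Since $\sH_G$ is free of rank $d$ over $\oK[\sigma]$ with basis $\{s_{1,d},\dots,s_{d,d}\}$ and likewise $\sH_{G'}$ with $\{s_{1,d'},\dots,s_{d',d'}\}$ (here $d'=d$), the $\oK[\sigma]$-linearity of $f$ means $f$ is given by right-multiplication by some matrix $Q \in \Mat_{d\times d}(\oK[\sigma])$, namely $f(h) = hQ$. Because $*$ is a bijection $\oK[\tau] \to \oK[\sigma]$ (extended entrywise, with transpose, to matrices, and satisfying $(BD)^{*} = D^{*}B^{*}$), we may write $Q = \rP^{*}$ for a unique $\rP \in \Mat_{d\times d}(\oK[\tau])$, so $f = \epsilon_{\rP}$. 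It remains to show that $f$ being $t$-linear forces $\rP$ to be a morphism of $t$-modules, i.e.\ $\rP\varphi(t) = \varphi'(t)\rP$. The hypothesis $t\cdot f(h) = f(t\cdot h)$ for all $h$, unwound via the definitions $t\cdot h = h\varphi(t)^{*}$ and $t \cdot f(h) = f(h)\varphi'(t)^{*}$, gives $h\rP^{*}\varphi'(t)^{*} = h\varphi(t)^{*}\rP^{*}$ for all $h \in \sH_G$; taking $h = s_{j,d}$ and letting $j$ range shows $\rP^{*}\varphi'(t)^{*} = \varphi(t)^{*}\rP^{*}$, and applying the anti-automorphism property $(BD)^{*} = D^{*}B^{*}$ this is $(\varphi'(t)\rP)^{*} = (\rP\varphi(t))^{*}$, whence $\varphi'(t)\rP = \rP\varphi(t)$ by injectivity of $*$. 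Thus $\rP \in \Hom_{\oK}(G,G')$ and $f = \epsilon_{\rP}$, proving surjectivity of $\rP \mapsto \epsilon_{\rP}$ on morphisms.

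The one point requiring a little care — and the place I expect the mild technical obstacle to lie — is the interplay between the left $\oK[\sigma]$-module structure used to represent $f$ by a matrix and the left $\oK[t]$-module structure through which $t$-linearity is phrased: one must make sure that $f(h) = hQ$ (right multiplication) is genuinely the general form of a $\oK[\sigma]$-linear map, which holds because $\sH_G, \sH_{G'}$ are \emph{free} of the same rank $d$ over $\oK[\sigma]$ and the $\oK[\sigma]$-action is on the left while the parametrizing matrix acts on the right, exactly as in the definition of $\epsilon_{\rP}$. Since the proposition restricts attention to $\bA$-finite $t$-modules of a fixed dimension $d$ and to dual $t$-motives of rank $d$ over $\oK[\sigma]$, these freeness hypotheses are available, and no subtlety about non-free modules intervenes. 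Finally, one records that $\epsilon$ is compatible with composition — $\epsilon_{\rP'} \circ \epsilon_{\rP} = \epsilon_{\rP'\rP}$ follows from $(\rP'\rP)^{*} = \rP^{*}(\rP')^{*}$ — and with identities, so the assignment is indeed a fully faithful functor. (I would also remark that this is the $\bA$-finite analogue of \cite[Thm.~2.5.11]{HartlJuschka16}, and cite it for the reader who wants the statement in the $t$-motive, rather than dual $t$-motive, formulation.)
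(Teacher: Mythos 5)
Your proposal is correct and follows essentially the same route as the paper's proof: expand the images of the standard $\oK[\sigma]$-basis $\{s_{1,d},\dots,s_{d,d}\}$ to represent the morphism by a matrix over $\oK[\sigma]$, pull it back through the bijection $*$ to a matrix $\rP\in\Mat_d(\oK[\tau])$, and use $t$-linearity on the basis vectors together with $(BD)^{*}=D^{*}B^{*}$ to deduce $\rP\varphi(t)=\varphi'(t)\rP$, with faithfulness immediate from injectivity of $*$. The only cosmetic difference is that you spell out the composition/identity compatibility of $\epsilon$, which the paper records separately before the proposition.
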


\begin{proof}
Consider the $\bA$-finite dual $t$-motives $\sH$ and $\sH'$ defined as above. Let $\mathfrak{h}$ be an element in $\Hom_{\oK[t,\sigma]}(\sH,\sH')$ and consider the $\oK[\sigma]$-basis $\{s_{1,d}, \dots, s_{d,d}\}$ of $\sH$. For any $1\leq j \leq d$, there exist $\mathcal{B}_{j1},\dots,\mathcal{B}_{jd}\in \oK[\sigma]$ such that $\mathfrak{h}(s_{j,d})=\smash{\sum_{i=1}^d}\mathcal{B}_{ji}s_{i,d}$. Set $\mathcal{B}':=(\mathcal{B}'_{ij}) \in \Mat_d(\oK[\tau])$ so that $(\mathcal{B}')^{*}=(\mathcal{B}_{ji})$. Thus, we see that
\[
s_{j,d} \varphi(t)^{*}(\mathcal{B}')^{*} = (t \cdot s_{j,d}) (\mathcal{B}')^{*} = \mathfrak{h}(t \cdot s_{j,d}) = t \cdot \mathfrak{h}(s_{j,d}) = t \cdot (s_{j,d} (\mathcal{B}')^{*}) = s_{j,d} (\mathcal{B}')^{*} \varphi'(t)^{*}.
\]
The above calculation implies that $\mathcal{B}' \varphi(t) = \varphi'(t)\mathcal{B}'$ and hence, $\mathcal{B}': G \rightarrow G'$ is a morphism of $t$-modules, that is, $\mathfrak{h}$ is the morphism $\epsilon_{\mathcal{B}'}: \sH \rightarrow \sH'$ associated to $\mathcal{B}'$. Thus, the functor is full. On the other hand, since $\epsilon_{\mathcal{B}'} =0$ if and only if $\mathcal{B}' =0$, the faithfulness is obvious. 
\end{proof}

In what follows, we briefly describe how to obtain the $\bA$-finite $t$-module $G=(\mathbb{G}^d_{a/\oK},\varphi)$ corresponding to a given $\bA$-finite dual $t$-motive $\sH$ (see also \cite[\S4.4]{BP20}). Note that one can realize the quotient $\sH/(\sigma-1)\sH$ to be $\oK^d$ as $\mathbb{F}_q$-vector spaces. Moreover, considering the $\oK[t]$-module action on $\oK^d$ after such an identification, we obtain the isomorphism
\[
\sH/(\sigma-1)\sH\cong G
\]
as $\bA$-modules. In other words, the $t$-action on $\sH/(\sigma-1)\sH$ coincides with the $t$-action on $G$. In our following example, we provide an explicit description for this phenomenon in the case of Drinfeld $\bA$-modules.

\begin{example}\label{Ex:tmotive} 
\begin{enumerate}
    \item[(i)] Let $G_0=(\GG_{a/\oK}, \phi)$ be the Drinfeld $\bA$-module of rank $r\geq 2$ given in \eqref{E:DrinfeldDef}. To ease the notation, we set $\sH_{\phi}:=\sH_{G_0}=\oK[\sigma]$. For each $1\leq i \leq r$, we let $n_i:=\sigma^{i-1}$. Using the definition of the $\oK[t]$-action on $\sH_{\phi}$ as well as the right division algorithm on $\oK[\sigma]$, one can see that the set $\{n_1, \dots, n_r\}$ forms a $\oK[t]$-basis for $\sH_{\phi}$. Let 
	$\bsn:= [n_1, \dots, n_r]^{\tr}$. The $\oK$-linear action of $\sigma$ on $\bsn$ is given by $\sigma \bsn = \Phi_{\phi} \bsn$, where
	\begin{equation}\label{E:sigmaactDM}
	\Phi_{\phi}:= \begin{bmatrix}
	0 & 1 & \dots & 0\\
	\vdots& \vdots & \ddots & \vdots\\
	0 & 0 & \dots & 1\\
	(t-\theta)/a_r^{(-r)} & -a_1^{(-1)}/a_r^{(-r)} & \dots & -a_{r-1}^{(-r+1)}/a_r^{(-r)}
	\end{bmatrix}\in \Mat_r(\oK[t]) \cap \GL_r(\oK(t)).
	\end{equation}
    
For any $x_1,\dots,x_r\in \oK[t]$, following \cite[\S4.6]{BP20}, we set
\[
[x_1,\dots,x_r]:=x_1\cdot 1+\cdots +x_r\cdot \sigma^{r-1}\in \sH_{\phi}.
\]
For any $1\leq i \leq r-1$ and $x\in \oK[t]$, observe that 
\begin{equation}\label{E:sigma1}
(\sigma^{i}-1)\cdot [x,0,\dots,0]=[-x,0,\dots,x^{(-i)},\dots,0]
\end{equation}
where $x^{(-i)}$ appears in the $(i+1)$-st coordinate. Furthermore, we have 
\begin{equation}\label{E:sigma2}
(\sigma^r-1)\cdot [a_rx,0,\dots,0]=[x^{(-r)}(t-\theta)-a_rx,-a_1^{(-1)}x^{(-r)},\dots, -a_{r-1}^{(-(r-1))}x^{(-r)}].
\end{equation}
Now using \eqref{E:sigma1} and \eqref{E:sigma2}, we obtain
\begin{align*}
t[x,0,\dots,0]&=[tx,0,\dots,0]\\
&=[tx+a_1x^q,-a_1^{(-1)}x,0,\dots,0]+[-a_1x^q,a_1^{(-1)}x,0,\dots,0]\\
&=[tx+a_1x^q,-a_1^{(-1)}x,0,\dots,0]+(\sigma-1)\cdot [a_1x^q,0,\dots,0]\\
&\qquad \vdots\\
&=[tx+a_1x^q+\cdots+a_{r-1}x^{q^{r-1}},-a_1^{(-1)}x,\dots, -a_{r-1}^{(-(r-1))}x]\\
&\ \ +(\sigma-1)\cdot [a_1x^q,0,\dots,0]+\cdots + (\sigma^{r-1}-1)\cdot [a_{r-1}x^{q^{r-1}},0,\dots,0]\\
&=[\theta x+ a_1x^q+\cdots+ a_{r-1}x^{q^{r-1}} + a_{r}x^{q^{r}}, 0,\dots, 0]\\
&\ \ +[x(t-\theta)-a_rx^{{q^r}}, -a_1^{(-1)}x^{(-r)},\dots, -a_{r-1}^{(-(r-1))}x^{(-r)}]\\
&\ \  +\sum_{i=1}^{r-1}(\sigma^i-1)\cdot [a_{i}x^{q^{i}},0,\dots,0]\\
&=[\theta x+ a_1x^q+\cdots+ a_{r}x^{q^{r}}, 0,\dots, 0]\\
&\ \ +(\sigma^r-1)[a_rx^{q^r},0,\dots,0] +\sum_{i=1}^{r-1}(\sigma^i-1)\cdot [a_{i}x^{q^{i}},0,\dots,0],
\end{align*}	
which shows that the $\oK[t]$-action on $\oK$, given by $\phi(t)$, is the same as the $\oK[t]$-action on the quotient $\sH_{\phi}/(\sigma-1)\sH_{\phi}$. Thus, $\sH_{\phi}$ is the $\bA$-finite dual $t$-motive associated to $G_0$.  

For $u \in \CC_{\infty}$, recall that $f_{u}(t)$ is the Anderson generating function of $G_0$ at $u$ given as in \eqref{E:AGF} and the fixed $\bA$-basis $\{\lambda_1, \dots, \lambda_r\}$  of $\Lambda_{G_0}$ in \S\ref{SS:bids}. We introduce the matrices $\Upsilon\in \Mat_r(\TT)$ and $V\in \GL_{r}(\oK)$ by
	\begin{equation}\label{E:UpsilonNV}
	\Upsilon := \begin{bmatrix}
	f_{\lambda_1}^{(1)}(t) &\dots & \dots &\dots & f_{\lambda_1}^{(r)} (t)\\
	\vdots &  &  & & \vdots\\
	\vdots & &  & & \vdots\\
	\vdots & &  & & \vdots\\
	f_{\lambda_r}^{(1)}(t) & \dots & \dots &\dots &  f_{\lambda_r}^{(r)}(t)
	\end{bmatrix} \ \   \mathrm{ and} \, \,
	V := \begin{bmatrix}
	a_1 & a_2^{(-1)}& \dots & a_{r-1}^{(-r+2)}&a_r^{(-r+1)}\\
	a_2 & a_3^{(-1)}&& a_r^{(-r+2)}&\\
	\vdots & \vdots&\reflectbox{$\ddots$}&\\
	\vdots &a_r^{(-1)}&&\\
	a_r&&&
	\end{bmatrix}.
	\end{equation}
	By \cite[Prop. 6.2.4]{GP19} (see also \cite[Prop.~3.3.9]{P08}), we have $\Upsilon\in \GL_r(\TT)$. Define 
	\[	\Psi_\phi := (\Upsilon V)^{-1}\in \GL_r(\TT).
	\]
 By using Lemma~\ref{L:AGFproperties}(ii) one checks directly that $\smash{\Psi_{\phi}^{(-1)}} = \Phi_\phi \Psi_\phi$. Thus, the $\bA$-finite dual $t$-motive $\sH_{\phi}$ is rigid analytically trivial and hence, $H_{\phi} :=\oK(t)\otimes_{\oK[t]} \sH_{\phi} $  is a $t$-motive. 
\item[(ii)] For a positive integer $n$, let $\mathbf{C}^{\otimes n}=(\GG_{a/K}, C^{\otimes n})$ be the $n$-th tensor power of the Carlitz module defined as in Example~\ref{Ex:0}. Similar to (i), set $\sH_{C^{\otimes n}}:=\oK[t]\widetilde{n}$ to be the free $\oK[t]$-module with a $\oK[t]$-basis $\{\widetilde{n}\}$. Defining $\sigma \widetilde{n} = (t-\theta)^n \, \widetilde{n}$, one can see that $\sH_{C^{\otimes n}}$ is an $\bA$-finite dual $t$-motive associated to $\mathbf{C}^{\otimes n}$ with $\oK[\sigma]$-basis given by $\{\widetilde{n}, (t-\theta)\widetilde{n}, \dots, (t-\theta)^{n-1}\widetilde{n}\}$. Moreover, one checks directly that $(\Omega^n)^{(-1)} = (t-\theta)^n \Omega^n$. Thus, $\sH_{C^{\otimes n}}$ is rigid analytically trivial and hence, $H_{C^{\otimes n}}:=\oK(t)\otimes_{\oK[t]} \sH_{C^{\otimes n}}$ is a $t$-motive. 
 \end{enumerate}
\end{example}

\begin{remark}\label{R:isomorphismsofvss}
\begin{itemize}
\item[(i)] Let $G$ be an $\bA$-finite $t$-module, $\sH_{G}$ the corresponding $\bA$-finite dual $t$-motive, and $H_G=\oK(t) \otimes_{\oK[t]} \sH_G\in \sT$ the $t$-motive associated to $\sH_G$.  Recall the definition of $\End_{\oK}(G)$ from \S\ref{TmodulesNAGF} and set $\End(G):=\End_{\oK}(G)$. Proposition~\ref{P:fullyfaithful} implies the isomorphism of rings $\End(G) \cong \End_{\oK[t, \sigma]}(\sH_{G})$. Thus, by \eqref{E:homsdualpre}, we also obtain the isomorphism $\End(G)\otimes_{\bA}\FF_q(t) \cong \End_{\sT}(H_{G})$ of $\FF_q(t)$-vector spaces.
\item[(ii)] Let $G_0=(\GG_{a/\oK},\phi)$ be the Drinfeld $\bA$-module given in \eqref{E:DrinfeldDef}. By Drinfeld \cite{Dri74}, we know that $\End(G_0)$ can be identified by the image of the ring isomorphism 
\[
\End(G_0)\to \{c\in \CC_{\infty}|\ \ c\Lambda_{G_0}\subseteq \Lambda_{G_0}\}\subset \CC_{\infty}
\]
	sending $c=c_0+c_1\tau+\dots+c_{\ell}\tau^{\ell}$ to $\rd c=c_0$. We set $K_{\phi}\subset \CC_{\infty}$ to be the fraction field of $\End(G_0)$. Thus, by part (i), we have the ring isomorphism
$K_{\phi}\cong \End_{\sT}(H_{\phi})$ which was first established by Anderson.
\end{itemize}
\end{remark}

\section{The \texorpdfstring{$t$}{t}-module \texorpdfstring{$\mathcal{E}_n$}{En}}\label{S:tmoduleEn}
Throughout this section, we fix a Drinfeld $\bA$-module $G_0=(\GG_{a/\oK}, \phi)$ given by 
\[
\phi(t):=\theta + a_1\tau +\dots +a_r\tau^r\in \overline{K}[\tau], \ \ a_r\neq 0.
\]
Our goal in this section is to provide several properties of the $t$-module $\mathcal{E}_n$ defined below.  

\begin{definition}
	\begin{itemize}
		\item[(i)] Let $\mathcal{E}_0=(\mathbb{G}^{r-1}_{a/\overline{K}}, \varphi_0)$ be a $t$-module where  
	$
 \varphi_0(t):=\theta \Id_{r-1}+E_1\tau+E_2\tau^2
 $
 is so that the matrices $E_1\in \Mat_{r-1}(\oK)$ and $E_2\in \Mat_{r-1}(\oK)$ are given as 
	\begin{equation}\label{E:matrices20}
	E_1:=(-1)^{r-1}\begin{bmatrix}
	-a_{r-1}&a_r& &  \\
	\vdots& &\ddots&  \\
	-a_{2}&0&\cdots &a_r\\
	-a_{1}&0&\cdots &0
	\end{bmatrix}\text{ and }E_2:=\begin{bmatrix}
	0&\cdots&0\\
	\vdots	& &\vdots\\
	a_r&\cdots &0
	\end{bmatrix}.
	\end{equation}
		
		\item[(ii)] Let $n\in \mathbb{Z}_{\geq 1}$. Let $\mathcal{E}_n=(\mathbb{G}^{rn+r-1}_{a/\overline{K}}, \varphi_n)$ be a $t$-module where 
  \[
  \varphi_n(t):=\theta \Id_{rn+r-1}+N'+E'\tau
  \]
  and the matrices $N'\in \Mat_{rn+r-1}(\mathbb{F}_q)$ and $E'\in \Mat_{rn+r-1}(\oK)$ are defined as 
		
		\begin{equation}\label{E:matrices2}
		N':=\begin{bmatrix}
		0&\cdots &0&\bovermat{$rn-1$}{1& 0&\cdots & 0} \\
		& \ddots& & \ddots& \ddots & &\vdots \\
		& &\ddots& &\ddots & \ddots&0\\
		& &  & 0&\cdots&0 & 1\\
		& &  & & 0&\cdots& 0\\
		& &  & & &\ddots& \vdots\\
		& &  & & & & 0\\
		\end{bmatrix}\begin{aligned}
		&\left.\begin{matrix}
		\\
		\\
		\\
		\\
		\end{matrix} \right\} 
		rn-1\\ 
		&\left.\begin{matrix}
		\\
		\\
		\\
		\end{matrix}\right\}
		r
		\end{aligned}
		\end{equation}
		and
		\begin{equation}\label{E:matrices2A}
		E':=(-1)^{r-1}\begin{bmatrix}
		0&\cdots&\cdots & \cdots &\cdots & \cdots & 0\\
		\vdots& & & & & &\vdots\\
		0& & & & & & 0\\
		1& 0 & \cdots&\cdots&\cdots &\cdots &0\\
		-a_{r-1}&a_r&\ddots &  & & &\vdots\\
		\vdots& & \ddots& \ddots & & & \vdots\\
		-a_{1}&0&\cdots &a_r&0&\cdots& 0
		\end{bmatrix}\begin{aligned}
		&\left.\begin{matrix}
		\\
		\\
		\\
		\end{matrix} \right\} 
		rn-1\\ 
		&\left.\begin{matrix}
		\\
		\\
		\\
		\\
		\end{matrix}\right\}
		r\\
		\end{aligned}.
		\end{equation}
	\end{itemize}
\end{definition}

Let $\Phi_0:= \det(\Phi_\phi) (\Phi_\phi^{-1})^\tr$ denote the cofactor matrix of $\Phi_\phi$ defined in \eqref{E:sigmaactDM}, and for $n \in \ZZ_{>0}$, set $\Phi_n:= \Phi_0 (t-\theta)^n$. Since $\sH_{\phi}$ is a free $\overline{K}[t]$-module of rank $r$, $\smash{\wedge_{\oK[t]}^{r-1}\sH_{\phi}}$ is also free over $\oK[t]$ of rank $r$. Consider the set $\{f_1,\dots,f_r\} \subseteq \wedge_{\oK[t]}^{r-1}\sH_{\phi}$ given by 
	\[
	f_i:=(-1)^{r-i}n_1\wedge \cdots \wedge\sigma^{r-(i+1)} n_1\wedge\sigma^{r-(i-1)} n_1\wedge \cdots \wedge \sigma^{r-1}n_1
	\]
 for $2\leq i \leq r-1$, and $f_1:=(-1)^{r-1}n_1\wedge \sigma n_1\dots \wedge \sigma^{r-2}n_1$,  $f_r:=\sigma n_1\wedge \dots \wedge \sigma^{r-1}n_1$.
Since the left $\sigma$-action  on $\wedge_{\oK[t]}^{r-1}\sH_{\phi}$ is diagonal, we obtain $\sigma f_1=(-1)^{r-1}f_r$ and
	\[
	\sigma f_i=\frac{(-1)^{r-1}}{a_r^{(-r)}}(t-\theta)f_{i-1}+\frac{(-1)^{r-1}}{a_r^{(-r)}}a_{r-(i-1)}^{(-(r-(i-1)))}f_r, \ \ 2\leq i \leq r.
	\]
	Thus, we have
	\begin{equation}\label{E:extmap}
	\sigma \begin{bmatrix} f_r\\
	f_{r-1}\\\vdots\\f_2\\f_1\end{bmatrix}=\dfrac{(-1)^{r-1}}{a_r^{(-r)}}\begin{bmatrix}
	a_1^{(-1)} & (t-\theta) & 0 & \dots & 0\\
	a_2^{(-2)} & 0 & (t-\theta)&&\vdots\\
	\vdots&\vdots&\ddots&\ddots&0\\
	\vdots&\vdots&\ddots&\ddots&(t-\theta)\\
	a_r^{(-r)} & 0&\dots&\dots&0
	\end{bmatrix}\begin{bmatrix} f_r\\
	f_{r-1}\\\vdots\\f_2\\f_1\end{bmatrix} = \Phi_0\begin{bmatrix} f_r\\
	f_{r-1}\\\vdots\\f_2\\f_1\end{bmatrix}.
	\end{equation}
Moreover, for $n \geq 1$ we consider the tensor product $\wedge_{\oK[t]}^{r-1}\sH_{\phi}\otimes_{\oK[t]} \sH_{C^{\otimes n}}$ which clearly is also free over $\oK[t]$ of rank $r$ with basis $\{f_1\otimes \widetilde{n}, \dots, f_r\otimes \widetilde{n}\}$. Moreover, since $\sigma \widetilde{n} = (t-\theta)^n \widetilde{n}$, we have 
\begin{equation}\label{E:extmaptensC}
	\sigma \begin{bmatrix} f_r\otimes \widetilde{n}\\
	f_{r-1}\otimes \widetilde{n}\\\vdots\\f_2\otimes \widetilde{n}\\f_1\otimes \widetilde{n}\end{bmatrix}=\Phi_0(t-\theta)^n\begin{bmatrix} f_r\otimes \widetilde{n}\\
	f_{r-1}\otimes \widetilde{n}\\\vdots\\f_2\otimes \widetilde{n}\\f_1\otimes \widetilde{n}\end{bmatrix}
 = \Phi_n\begin{bmatrix} f_r\otimes \widetilde{n}\\
	f_{r-1}\otimes \widetilde{n}\\\vdots\\f_2 \otimes \widetilde{n}\\f_1\otimes \widetilde{n}\end{bmatrix}.
	\end{equation}

For the rest of the subsection, let $n$ be a non-negative integer.  Set $\rA \in \Mat_r(\ok[t])$ to be 
\[
\rA := \begin{bmatrix}
    a_1^{(-1)}(t-\theta)^{n} & a_r^{(-1)}(t-\theta)^{n+1}& a_{r-1}^{(-1)}(t-\theta)^{n+1}& \dots & \dots & a_2^{(-1)}(t-\theta)^{n+1}\\
    0 & 0& (t-\theta)^{n+1} & 0 & \dots & 0 \\
    \vdots & \vdots & 0 & (t-\theta)^{n+1} &\ddots & \vdots \\
    \vdots & \vdots &\vdots & \ddots & \ddots  &0\\
     0 & 0&0& \dots&0& (t-\theta)^{n+1}\\
     (t-\theta)^{n} &0&\dots&\dots&\dots&0
\end{bmatrix}.
\]
Recall from \S\ref{S:AFdual} that $\sH_{\mathcal{E}_n}=\Mat_{1 \times rn+r-1}(\oK[\sigma])$ is a $\oK[t,\sigma]$-module whose  $\oK[t]$-module structure is given by
\begin{equation}\label{E:action2}
ct \cdot h := ch\varphi_n(t)^{*}, \ \ c\in \oK, \ \ h\in \sH_{\mathcal{E}_n}.
\end{equation}
Note that $\{s_{1,rn+r-1},\dots,s_{rn+r-1,rn+r-1}\}$ forms a $\oK[\sigma]$-basis for $\sH_{\mathcal{E}_n}$. For $2\leq i \leq r$ and $n\in \ZZ_{\geq 0}$, we let $\ell_{i,n}:= s_{rn+i-1,rn+r-1}$. Moreover, let $\ell_{1,0}:=\sigma s_{r-1, r-1}$ when $n=0$ and $\ell_{1,n}:=s_{rn,rn+r-1}$ otherwise. Observe that the set $\{\ell_{1,n}, \dots, \ell_{r,n}\}$ forms a $\oK[t]$-basis of $\sH_{\mathcal{E}_n}$. 

Set $\gamma$ to be a $(q-1)$-st root of $\big((-1)^{r-1}/a_r^{(-r)}\big)^{-1}\in \oK$ and  $\widetilde{\gamma}$ to be a $(q-1)$-st root of $((-1)^{r-1}/a_r)^{-1}\in \oK$.

Consider the matrix 
\begin{equation}\label{E:CchangeofC}
\rB:= \gamma^{(1)}\cdot \begin{bmatrix}
1& 0&\dots &\dots & \dots & 0\\
& a_r^{(-1)}&a_{r-1}^{(-1)} & \dots&\dots & a_2^{(-1)}\\
&&\ddots&\ddots&&\vdots\\
&&&\ddots&\ddots&\vdots\\
&  &&&a_r^{(-r+2)}&a_{r-1}^{(-r+2)}\\
&  & &&&a_r^{(-r+1)}
\end{bmatrix}\in \GL_r(\oK).
\end{equation}

Let $\chi_0 \in \Mat_r(\oK)$ be the diagonal matrix 
with $\widetilde{\gamma}^{(-1)}$ in the first diagonal entry and $\widetilde{\gamma}$ in the remaining diagonal entries, and if $n\geq 1$, we let $\chi_n := \widetilde{\gamma} \Id_r \in \Mat_r(\oK)$. For a non-negative integer $n$, set 
\begin{equation}\label{E:basisEn}
\bsell :=[\ell_{1,n} \dots, \ell_{r,n}]^{\tr}
\end{equation}
and note that $\sigma \bsell =\chi_n^{(-1)} \rA\chi_n^{-1}\bsell$. Define  $\bsh_{\mathcal{E}_n}:=\rB\chi_n^{-1}\bsell=[h_1, \dots, h_r]^\tr$. Observe that we have $\Phi_n = \rB^{(-1)}\rA \rB^{-1}$ and $\sigma \bsh_{\mathcal{E}_n}=\Phi_n\bsh_{\mathcal{E}_n}$. Since the entries of $\bsh_{\mathcal{E}_n}$ form a $\oK[t]$-basis for $\sH_{\mathcal{E}_n}$, one can see by \eqref{E:extmap} and \eqref{E:extmaptensC} that for $n \in \ZZ_{\geq 0}$ as $\bA$-finite dual $t$-motives
\begin{equation}\label{E:exterisom}
\smash{\wedge_{\oK[t]}^{r-1}\sH_{\phi}\otimes_{\oK[t]} \sH_{C^{\otimes n}} \cong \sH_{\mathcal{E}_n}},
\end{equation}
and hence, from hereon, we identify $\smash{\wedge_{\oK[t]}^{r-1}\sH_{\phi}\otimes_{\oK[t]} \sH_{C^{\otimes n}}}$ with $\sH_{\mathcal{E}_n}$. 
By using the $\oK[t]$-module action on $\sH_{\cE_n}$ given in \eqref{E:action2}, we have 
\begin{equation}\label{basistnsigma2}
\begin{split}
&(t-\theta)^{n-k} \ell_{i,n}=s_{rk+i-1,rn+r-1} \text{ for } 1\leq k \leq n,\text{ and } 1\leq i \leq r,\\
&(t-\theta)^n\ell_{j,n}=s_{j-1,rn+r-1} \text{ for } 2\leq j \leq r. 
\end{split}
\end{equation}

 \begin{remark}\label{R:aspextC} 
Let $n$ be a non-negative integer. By direct computation, one can show that the top coefficient of $\varphi_n(t^{rn+r-1})$ is an invertible lower triangular matrix and therefore, the $t$-module $\cE_n$ is \emph{almost strictly pure} in the sense of \cite{NPapanikolas21}.
  \end{remark}

\subsection{The Galois group of \texorpdfstring{$\mathcal{E}_n$}{En}}\label{S:GGofEn}
Let $n$ be a non-negative integer and set $\sH_n:= \sH_{\cE_n}$. The $\bA$-finite dual $t$-motive $\sH_n$ has a rigid analytic trivialization given by $\Psi_n := \Psi_0\Omega^{n}\in \GL_r(\TT)$,
where $\Psi_0:= \det(\Psi_\phi) (\Psi_\phi^{-1})^\tr$ denotes the cofactor matrix of $\Psi_\phi$ defined in Example~\ref{Ex:tmotive} (i). Hence, the pre-$t$-motive $H_{n} := \oK(t)\otimes_{\oK[t]}\sH_{n} $ associated to $\sH_{n}$ forms a $t$-motive. We set $\rL_{n}:=\End_{\sT}(H_n)$ and $\rL_{\phi}:=\End_{\sT}(H_{\phi})$. 

\begin{proposition}\label{P:equalend2} 
We have $\End_{\oK[t, \sigma]}(\sH_{\phi}) = \End_{\oK[t, \sigma]}(\sH_{n})$ and $\rL_{\phi}= \rL_n$. In particular, $\rL_n$ forms a field.
\end{proposition}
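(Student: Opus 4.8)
The plan is to exploit the explicit tensor decomposition $\sH_n \cong \wedge_{\oK[t]}^{r-1}\sH_\phi \otimes_{\oK[t]} \sH_{C^{\otimes n}}$ established in \eqref{E:exterisom}, together with the fact that $\sH_{C^{\otimes n}}$ is a rank-one object with invertible $\sigma$-action up to the unit $(t-\theta)^n$. First I would treat the case $n=0$ separately: there $\sH_0 = \wedge_{\oK[t]}^{r-1}\sH_\phi$ and the claim is that an endomorphism of $\sH_\phi$ induces one of its $(r-1)$-st exterior power and vice versa. The matrix $\Phi_0 = \det(\Phi_\phi)(\Phi_\phi^{-1})^{\tr}$ is the cofactor matrix, so the $\sigma$-action on $\wedge^{r-1}\sH_\phi$ is the "inverse transpose twisted by a unit" of the $\sigma$-action on $\sH_\phi$; equivalently, $\wedge^{r-1}\sH_\phi$ is isomorphic to $\sH_\phi^\vee$ twisted by the determinant $t$-motive $\wedge^r\sH_\phi = \sH_{C}$ (up to the appropriate power). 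Dualizing is an anti-equivalence on the relevant category, so it induces a ring anti-isomorphism $\End_{\oK[t,\sigma]}(\sH_\phi) \to \End_{\oK[t,\sigma]}(\sH_\phi^\vee)$; but $\End(\sH_\phi) \cong \End(G_0)$ is commutative (by Remark~\ref{R:isomorphismsofvss}(ii) it embeds in $\CC_\infty$), so an anti-isomorphism is an isomorphism. Tensoring with the invertible rank-one object $\sH_C^{\otimes(\cdot)}$, whose endomorphism ring is $\bA$ (it is $\mathbf{C}^{\otimes n}$, whose endomorphisms are just scalars), does not change the endomorphism ring: an endomorphism of a tensor product by an invertible object is the tensor of endomorphisms of each factor, and $\End(\sH_C^{\otimes n}) = \bA$ acts centrally. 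Hence $\End_{\oK[t,\sigma]}(\sH_n) \cong \End_{\oK[t,\sigma]}(\sH_\phi)$ for all $n \geq 0$.

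To make this rigorous without invoking abstract Tannakian machinery prematurely, I would instead argue at the level of the explicit matrices. By \eqref{E:homsdualpre} it suffices to compute $\Hom_{\oK(t)[\sigma,\sigma^{-1}]}(H_n, H_n)$ and compare it with $\Hom_{\oK(t)[\sigma,\sigma^{-1}]}(H_\phi, H_\phi)$. An endomorphism of $H_n$ is a matrix $U \in \Mat_r(\oK(t))$ with $U\Phi_n = \Phi_n U^{(-1)}$; writing $\Phi_n = \Phi_0(t-\theta)^n$ and noting $(t-\theta)^n$ is central and $(t-\theta)^{n,(-1)} = (t-\theta-\text{unit adjustment})$... — more carefully, since $\sigma$ acts on the scalar $(t-\theta)^n$ by $(t-\theta^q)^n$, I would absorb the Carlitz factor via the rigid analytic trivialization $\Psi_n = \Psi_0\Omega^n$ and use that $\Omega$ is a unit in $\TT$ satisfying $\Omega^{(-1)} = (t-\theta)\Omega$. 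Concretely, conjugation by $\Psi_n$ identifies the Betti realizations, and since $H^{\mathrm{Betti}}$ is the fiber functor, $\End_\sT(H_n) = \End_{\FF_q(t)}(H_n^{\mathrm{Betti}})^{\Gamma_{H_n}}$; but $H_n^{\mathrm{Betti}} = H_\phi^{\mathrm{Betti}} \otimes H_{C^{\otimes n}}^{\mathrm{Betti}}$ as representations, and the second factor is one-dimensional (a character), so the commutant is unchanged. Combined with \eqref{E:homsdualpre} and Remark~\ref{R:isomorphismsofvss}(i), this gives $\rL_n = \End_\sT(H_n) \cong \End_\sT(H_\phi) \cong K_\phi$, and in particular the isomorphism of rings $\End_{\oK[t,\sigma]}(\sH_\phi) = \End_{\oK[t,\sigma]}(\sH_n)$ (identifying both inside $\End(\wedge^{r-1}\sH_\phi \otimes \sH_{C^{\otimes n}})$ via the fixed isomorphism \eqref{E:exterisom}).

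The last sentence, that $\rL_n$ is a field, then follows because $\rL_\phi \cong K_\phi$ is the fraction field of $\End(G_0)$ by Remark~\ref{R:isomorphismsofvss}(ii), which is a field by construction; since $\rL_n \cong \rL_\phi$, it is a field too.

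The main obstacle I anticipate is making the "exterior power is the dual up to a twist" step genuinely precise with the correct twisting unit and sign bookkeeping — the matrices $\Phi_0$, $\rA$, $\rB$, $\chi_n$ and the roots $\gamma, \widetilde\gamma$ in \eqref{E:CchangeofC}, \eqref{E:basisEn} are set up exactly so that $\Phi_n = \rB^{(-1)}\rA\rB^{-1}$, and one must verify that the pairing realizing $\wedge^{r-1}\sH_\phi \cong \sH_\phi^\vee \otimes \wedge^r\sH_\phi$ is $\sigma$-equivariant, not merely $\oK[t]$-linear. Once that equivariance is checked, the reduction of "anti-isomorphism" to "isomorphism" via commutativity of $\End(G_0)$ is immediate, and tensoring with the invertible Carlitz object is formal.
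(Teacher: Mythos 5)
Your overall strategy is correct, but it takes a more categorical route than the paper, and the step you defer as "the main obstacle" is in fact the paper's entire proof. The paper argues in two lines at the level of matrices: if $\rG\in\Mat_r(\oK[t])$ represents $g\in\End_{\oK[t,\sigma]}(\sH_n)$ on the basis $\bsh_{\mathcal{E}_n}$, then $\rG^{(-1)}\Phi_n=\Phi_n\rG$; since $\Phi_n=\det(\Phi_\phi)(t-\theta)^n(\Phi_\phi^{-1})^{\tr}$ and the scalar factor multiplies $\Phi_n$ on both sides \emph{untwisted} and hence cancels, transposing gives $(\rG^{\tr})^{(-1)}\Phi_\phi=\Phi_\phi\rG^{\tr}$, so $\rG^{\tr}$ is an endomorphism of $\sH_\phi$, and conversely; the same computation with $\oK(t)$-entries gives $\rL_\phi=\rL_n$, and the field statement follows from Remark~\ref{R:isomorphismsofvss}. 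This transposition is exactly the concrete incarnation of your ``$\wedge^{r-1}\sH_\phi\cong\sH_\phi^\vee\otimes\wedge^{r}\sH_\phi$, dualize (anti-isomorphism, hence isomorphism by commutativity), then tensor with the invertible Carlitz object'' plan, and it has the advantage of verifying the $\sigma$-equivariance you flag and of handling integrality for free, since transposition preserves $\Mat_r(\oK[t])$. Your conceptual route does work (the paper itself uses the decomposition \eqref{E:relation} and the invertibility of $C_{n+1}$ in the proof of Proposition~\ref{P:simpletensor2}), but it buys generality at the cost of having to check precisely the equivariant pairing you postpone.

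Three slips in your second paragraph should be repaired. First, the worry about $\sigma$ twisting $(t-\theta)^n$ is a red herring: in the difference equation the scalar appears on both sides untwisted and cancels, so no adjustment via $\Omega$ is needed. Second, $H_n^{\text{Betti}}$ is not $H_\phi^{\text{Betti}}\otimes H_{C^{\otimes n}}^{\text{Betti}}$; by \eqref{E:relation} one has $H_n\cong(H_\phi^{\vee}\otimes C_1)\otimes C_n$, so the correct comparison involves the dual (equivalently the $(r-1)$-st exterior power) of $H_\phi^{\text{Betti}}$, and the commutant is the opposite ring --- harmless only because $\End(G_0)$ is commutative, as you note in your first paragraph, but as written the identification is wrong. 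Third, the integral equality $\End_{\oK[t,\sigma]}(\sH_\phi)=\End_{\oK[t,\sigma]}(\sH_n)$ does not follow ``in particular'' from $\rL_n\cong\rL_\phi$: the map \eqref{E:homsdualpre} only controls the endomorphism rings after $\otimes_{\bA}\FF_q(t)$, so you must exhibit the correspondence integrally --- either by carrying out your paragraph-one construction over $\oK[t,\sigma]$ (using that $\sH_{C^{\otimes n}}$ is free of rank one over $\oK[t]$, so tensoring with it is fully faithful on these modules) or, most simply, by the transposition argument above.
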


\begin{proof}
	Let $g\in \End_{\oK[t, \sigma]}(\sH_{n})$ and $\rG\in \Mat_{r}(\oK[t])$ be such that $[g(h_1),\dots,g(h_r)]^{{\tr}}= \rG\bsh_{\mathcal{E}_n}$. Since $\sigma g = g\sigma$, we obtain $\rG^{(-1)} \Phi_n = \Phi_n \rG$ . This implies that  $(\rG^{\tr})^{(-1)} \Phi_{\phi} = \Phi_{\phi} \rG^{\tr}$.  By a slight abuse of notation, we see that $G^{\tr}$ defines an element in $\End_{\oK[t, \sigma]}(\sH_{\phi})$ such that $[g(n_1),\dots,g(n_r)]^{{\tr}}= \rG^{\tr}\bsn$. The other inclusion can be obtained similarly. The second equality can be proved by using the same argument and the last assertion is a consequence of the second equality and Remark \ref{R:isomorphismsofvss}.
\end{proof}

We call a $t$-motive $H$ \textit{simple} if it is simple as a left $\oK(t)[\sigma, \sigma^{-1}]$-module. 

\begin{proposition}\label{P:simpletensor2}
	For any $n\geq 0$, the $t$-motive $H_n$ is simple.
\end{proposition}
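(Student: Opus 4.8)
The plan is to reduce the assertion, through a short chain of Tannakian manipulations, to the classical simplicity of the $t$-motive $H_\phi$ of the Drinfeld module $G_0$. Tensoring the identification \eqref{E:exterisom} with $\oK(t)$ over $\oK[t]$ gives an isomorphism in $\sT$
\[
H_n \;\cong\; \wedge_{\oK(t)}^{r-1} H_\phi \otimes_{\oK(t)} H_{C^{\otimes n}}
\]
for $n\ge 1$, while $H_0\cong \wedge_{\oK(t)}^{r-1}H_\phi$ comes directly from \eqref{E:extmap}; note that $\sT$ is closed under $\otimes$, duals, and subquotients, so the exterior powers appearing here are genuinely $t$-motives. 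By Example~\ref{Ex:tmotive}(ii) the $t$-motive $H_{C^{\otimes n}}$ is one-dimensional over $\oK(t)$, hence an invertible object of the Tannakian category $\sT$; since $-\otimes_{\oK(t)} H_{C^{\otimes n}}$ is an autoequivalence of $\sT$ with quasi-inverse $-\otimes_{\oK(t)} H_{C^{\otimes n}}^{\vee}$, it preserves the lattice of subobjects and hence carries simple objects to simple objects. Thus it suffices to prove that $\wedge_{\oK(t)}^{r-1}H_\phi$ is simple.

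Next I would invoke the perfect, functorial pairing $\wedge_{\oK(t)}^{r-1}H_\phi\times H_\phi\to \wedge_{\oK(t)}^{r}H_\phi$, $(\omega,v)\mapsto \omega\wedge v$, which induces an isomorphism $\wedge_{\oK(t)}^{r-1}H_\phi\cong H_\phi^{\vee}\otimes_{\oK(t)}\wedge_{\oK(t)}^{r}H_\phi$ that is compatible with the diagonal $\sigma$-actions and is therefore an isomorphism in $\sT$. The top exterior power $\wedge_{\oK(t)}^{r}H_\phi$ is again one-dimensional over $\oK(t)$ (the $\sigma$-action being multiplication by $\det\Phi_\phi$), hence invertible, and $(-)^{\vee}$ is a contravariant autoequivalence of $\sT$; as both operations preserve simplicity, $\wedge_{\oK(t)}^{r-1}H_\phi$ is simple if and only if $H_\phi$ is simple.

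Finally, $H_\phi$ is simple, which is well known; the cleanest self-contained justification is via purity. First observe that any sub-$\oK(t)[\sigma,\sigma^{-1}]$-module of the $t$-motive $H_\phi$ is again a $t$-motive, since rigid analytic triviality is inherited by subobjects (see \cite{P08}). Now $G_0$ has rank $r\ge 2$ and dimension $1$, so $H_\phi$ is pure of weight $1/r$, and purity passes to any nonzero sub-$t$-motive; but the rank of a pure $t$-motive of weight $1/r$ is divisible by $r$ (the function-field analogue of the slope classification; see \cite{HartlJuschka16}), so a sub-$t$-motive of $H_\phi$ has rank $0$ or $r$, i.e.\ is $0$ or $H_\phi$ itself. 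Hence $H_\phi$, and therefore $\wedge_{\oK(t)}^{r-1}H_\phi$ and $H_n$, is simple. The only point requiring genuine care is verifying that the multilinear-algebra isomorphisms above are $\sigma$-equivariant, so that every reduction actually takes place in $\sT$ rather than merely among $\oK(t)$-vector spaces; granting this bookkeeping, no substantive obstacle remains, the argument being a short sequence of standard Tannakian reductions.
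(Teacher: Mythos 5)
Your reduction is correct and, at its core, runs parallel to the paper's: both arguments twist away the Carlitz factor by an invertible rank-one object and reduce everything to the simplicity of $H_\phi$. The paper writes $H_n \cong (H_{\phi}^{\vee}\otimes C_1)\otimes C_{n}$ as in \eqref{E:relation}, proves directly (by the orthogonal-complement argument) that $H_{\phi}^{\vee}$ is simple as a left $\oK(t)[\sigma,\sigma^{-1}]$-module, and transports simplicity through the explicit autoequivalence $P\mapsto P\otimes C_{n+1}^{\vee}$ of $\sP$. You instead keep the exterior-power description $H_n\cong \wedge^{r-1}H_\phi\otimes H_{C^{\otimes n}}$ coming from \eqref{E:exterisom}, identify $\wedge^{r-1}H_\phi\cong H_\phi^{\vee}\otimes\wedge^{r}H_\phi$, and invoke the general facts that tensoring with an invertible object and dualizing preserve simplicity. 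These are the same manipulations in slightly different packaging (the paper's $C_1$ is exactly $\wedge^{r}H_\phi$ up to the identification underlying \eqref{E:relation}). One small caution: simplicity here means simplicity as a left $\oK(t)[\sigma,\sigma^{-1}]$-module, so your lattice-of-subobjects argument should be run in the category $\sP$ of pre-$t$-motives rather than in $\sT$; this is harmless, since tensoring by an invertible object and dualizing are autoequivalences of $\sP$ as well, which is precisely how the paper phrases its functor $\mathfrak{F}$.

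The genuine divergence is the last input. The paper simply quotes the known simplicity of $H_\phi$ from \cite[Cor.~3.3.3]{CP12}, whereas you sketch a purity proof. As written, that sketch leaves two nontrivial points unjustified: (a) that an arbitrary nonzero $\oK(t)[\sigma,\sigma^{-1}]$-submodule $U\subseteq H_\phi$ yields a sub-dual-$t$-motive of $\sH_\phi$ to which slope theory applies (one must intersect with the lattice $\sH_\phi$ and verify $\sigma$-stability and finite generation over $\oK[\sigma]$), and (b) that purity of weight $1/r$ is inherited by such submodules, so that divisibility of the rank by $r$ forces $U=0$ or $U=H_\phi$. Both facts are true, via the isoclinic local structure at the place $\infty$ and the Dieudonn\'e--Manin-type classification discussed in \cite{HartlJuschka16}, but you cite them only loosely, and this is the weakest link of your write-up. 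The most economical repair is to do what the paper does and invoke \cite[Cor.~3.3.3]{CP12} directly; with that substitution your argument is complete and, apart from the re-derivation of this known input, essentially coincides with the paper's proof.
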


\begin{proof}
	Consider the dual $H_{\phi}^{\vee}= \Hom_{\oK(t)}(H_{\phi}, \textbf{1}_{\sP})$ of the $t$-motive $H_{\phi}$. We claim that $H_{\phi}^{\vee}$ is a simple left $\oK(t)[\sigma, \sigma^{-1}]$-module. Let $\mathcal{U}$ be a left $\oK(t)[\sigma, \sigma^{-1}]$-submodule of $H_{\phi}^{\vee}$ and set $\mathcal{U}^{\perp}:= \{h \in H_{\phi} \, |\, f(h)=0 \, \text{for all} \, f \in \mathcal{U}\}$. Then, it follows that $\mathcal{U}^{\perp}$ is a left $\oK(t)[\sigma, \sigma^{-1}]$-submodule of $H_{\phi}$. Since, by \cite[Cor.~3.3.3]{CP12}, $H_{\phi}$ is simple, we have that $\mathcal{U}^{\perp}$ is either $\{0\}$ or equal to $H_{\phi}$. By using the definition of $\mathcal{U}^{\perp}$, we see that $\mathcal{U}=H_{\phi}^{\vee}$  if $\mathcal{U}^{\perp}=\{0\}$ and $\mathcal{U}$ is trivial  if $\mathcal{U}^{\perp}=H_{\phi}$. This proves the claim. Let $\sC_n$ be the $\bA$-finite dual $t$-motive associated  to the $n$-th tensor power of the Carlitz module $\mathbf{C}^{\otimes n}=(\mathbb{G}^n_{a/K},C^{\otimes n})$ given in Example~\ref{Ex:0} and define $C_n := \oK(t)\otimes_{\oK[t]}\sC_{n} $ to be the $t$-motive associated to $\sC_{n}$. One notes that 
	\begin{equation}\label{E:relation}
	H_n \cong (H_{\phi}^{\vee}\otimes C_1) \otimes C_{n}
	\end{equation}
	as $\oK(t)[\sigma,\sigma^{-1}]$-modules. By \cite[$\S$3.2.11]{P08}, we have that $C_n\otimes C_n^{\vee} \cong \textbf{1}_{\sP}$ as pre-$t$-motives  and the functor $\mathfrak{F}: \sP \rightarrow \sP$ defined by 
	$
	P \mapsto P \otimes C_{n+1}^{\vee}
	$
	yields an equivalence of categories. On the other hand, by using the construction of $\sH_{n}$, one can see that $\mathfrak{F}(H_n) = H_{\phi}^{\vee}$. \par
	
	Let $J$ be a submodule of $H_n$ and let $\iota_J:J \rightarrow H_n$ be the inclusion monomorphism. Then, we have a left $\oK(t)[\sigma, \sigma^{-1}]$-module homomorphism $\mathfrak{F}(\iota_J):\mathfrak{F}(J) \rightarrow H_{\phi}^{\vee}$. Since the image $\im\mathfrak{F}(J)$ of $\mathfrak{F}(J)$ under the map $\mathfrak{F}(\iota_J)$ is a submodule of $H_{\phi}^{\vee}$ and $H_{\phi}^{\vee}$ is a simple left $\oK(t)[\sigma, \sigma^{-1}]$-module, $\im \mathfrak{F}(J)$ is either $0$ or $H_{\phi}^{\vee}$. If $\im \mathfrak{F}(J)=0$, then since $\mathfrak{F}$ is fully faithful, $\iota_J$ is the zero map which implies that $J=0$. If $\im \mathfrak{F}(J)= H_{\phi}^{\vee}$, then $\mathfrak{F}(\iota_J)$ is an isomorphism. Since $\mathfrak{F}$ is fully faithful, this implies that $\iota_J$ is an isomorphism. Hence, $J=H_n$. 
\end{proof}

Let $g\in \rL_n$ and $\rG\in \Mat_{r}(\oK(t))$ be such that $[g(h_1),\dots,g(h_r)]^{{\tr}}= \rG\bsh_{\mathcal{E}_n}$ and define 
\begin{equation}\label{E:fixedPsi}
\mathcal{F}_{g}:=(\Psi_n)^{-1}\rG\Psi_n\in \Mat_{r}(\LL).
\end{equation}
Note that $\sigma$ fixes $\mathcal{F}_{g}$, and thus, by \cite[Lem. 3.3.2]{P08}, we have $\mathcal{F}_{g} \in \Mat_{r}(\FF_q(t))$. Recall the $\mathbb{F}_q(t)$-vector space $(H_n)^{\text{Betti}}$ given in \S\ref{SS:t-motives}. By \cite[Prop. 3.3.8]{P08} we obtain $\End((H_n)^{\text{Betti}})=\Mat_{r}(\FF_q(t))$. Hence, we have the following injective map
$
\rL_n \to \Mat_r(\mathbb{F}_q(t)),
$ defined by 
\[
g \mapsto \mathcal{F}_{g}.
\]
Recall the Galois group $\Gamma_{H_n}$ of $H_n$. Setting $\rR$ to be an $\FF_q(t)$-algebra and using the functoriality of the representation $\mu: \Gamma_{ H_n} \rightarrow \GL((H_n)^{\text{Betti}})$ in $(H_n)^{\text{Betti}}$ (\cite[\S 3.5.2]{P08}),  for any $\gamma \in \Gamma_{H_n}(\rR)$ we have the commutative diagram (cf. \cite[$\S$3.2]{CP11}):
\begin{equation} \label{E:Com2}
\begin{tikzcd}
\rR \otimes_{\FF_q(t)} H_n^{\text{Betti}} \arrow{r}{\mu(\gamma)} \arrow{d}{1\otimes \mathcal{F}_{g}} & \rR \otimes_{\FF_q(t)} H_n^{\text{Betti}} \arrow{d}{1 \otimes \mathcal{F}_{g}} \\
\rR \otimes_{\FF_q(t)} H_n^{\text{Betti}} \arrow{r}{\mu(\gamma)} & \rR \otimes_{\FF_q(t)} H_n^{\text{Betti}}
\end{tikzcd}.
\end{equation}

Before we give the main result of this subsection, we note that by \cite[Thm. 4.5.10]{P08}, $\rL_{\phi}$ can be embedded into $\End(H_{\phi}^{\text{Betti}})=\Mat_{r}(\FF_q(t))$ and set $\bs:=[\rL_{\phi}:\mathbb{F}_q(t)]$. We further define the centralizer $\Cent_{\GL_r/\FF_q(t)}$ to be the algebraic group over $\mathbb{F}_q(t)$ such that 
\[
\Cent_{\GL_r/\FF_q(t)}(\rL_n)(\rR):= \{\gamma \in \GL_r(\rR) \mid \gamma g = g \gamma \, \, \, \forall \, g\in \rR\otimes_{\FF_q(t)} \rL_n \subseteq \Mat_r(\rR)\}.
\]
Note that by \eqref{E:Com2} we have $\Gamma_{H_n} \subseteq \Cent_{\GL_r/\FF_q(t)}(\rL_n)$. Indeed we can obtain more about the structure of $\Gamma_{H_n}$ in the following theorem, whose proof has been shortened compared to the previous version, thanks to the suggestion of the referee.

\begin{theorem}\label{T:equalitygamma2}
	Let $[K_{\phi}:K]=\bs$. We have $\Gamma_{H_n} = \Gamma_{H_{\phi}}=\Cent_{\GL_r/\FF_q(t)}(\rL_n)$. In particular, 
	\[
	\dim \Gamma_{H_n}=\trdeg_{\oK}\oK\left(\bigcup_{i =1}^r\bigcup_{j=0}^{r-1}\widetilde{\pi}^{n+1} F_{G_0,\delta_j}(\lambda_i)\right)=r^2/\bs.
	\]
\end{theorem}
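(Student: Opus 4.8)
The plan is to prove the three-part equality $\Gamma_{H_n} = \Gamma_{H_\phi} = \Cent_{\GL_r/\FF_q(t)}(\rL_n)$ first, and then extract the numerical consequences from it. Since $H_n \cong (H_\phi^\vee \otimes C_1)\otimes C_n$ as established in \eqref{E:relation}, and since twisting by a rank-one $t$-motive such as $C_{n+1}$ induces an equivalence of Tannakian subcategories (via the functor $\mathfrak F$ used in the proof of Proposition~\ref{P:simpletensor2}), the Tannakian category $\sT_{H_n}$ generated by $H_n$ coincides with the one generated by $H_\phi$ up to this twist; more precisely, one checks that the Carlitz motive $C_{n+1}$ already lies in $\sT_{H_\phi}$ (its Galois group is $\Gm$, a quotient of $\Gamma_{H_\phi}$ via the determinant / Legendre relation \eqref{E:periodLeg}), so $\sT_{H_n} = \sT_{H_\phi}$ and hence $\Gamma_{H_n} \cong \Gamma_{H_\phi}$. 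Alternatively, and perhaps more cleanly, I would produce an explicit rigid analytic trivialization: $\Psi_n = \Psi_0\Omega^n$ with $\Psi_0$ the cofactor matrix of $\Psi_\phi$, so that the $\FF_q(t)$-algebra generated by $\widetilde\Psi_n$ (in the Tannakian/difference-Galois sense of Theorem~\ref{T:Pap}) is manifestly the same as that generated by $\widetilde{\Psi_\phi}$, because $\Omega^n$ is a scalar and $\Psi_0 = \det(\Psi_\phi)(\Psi_\phi^{-1})^\tr$ is a rational function of the entries of $\Psi_\phi$ with $\det(\Psi_\phi)$ itself lying in $\FF_q(t)\cdot\widetilde\pi$-span — hence $Z_{\Psi_n}$ and $Z_{\Psi_\phi}$ generate the same group scheme and $\Gamma_{\Psi_n} = \Gamma_{\Psi_\phi}$.

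Next I would pin down $\Gamma_{H_\phi}$ itself. Here I invoke \cite[Thm.~3.5.4]{CP12}: since $H_\phi$ is simple (by \cite[Cor.~3.3.3]{CP12}, used already in Proposition~\ref{P:simpletensor2}) with endomorphism field $\rL_\phi = K_\phi$ of degree $\bs$ over $\FF_q(t)$, Chang--Papanikolas identify $\Gamma_{H_\phi}$ with the full centralizer $\Cent_{\GL_r/\FF_q(t)}(\rL_\phi)$. Because $\rL_n = \rL_\phi$ by Proposition~\ref{P:equalend2}, this reads $\Gamma_{H_\phi} = \Cent_{\GL_r/\FF_q(t)}(\rL_n)$, and combining with the previous paragraph gives the chain of equalities. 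The inclusion $\Gamma_{H_n}\subseteq \Cent_{\GL_r/\FF_q(t)}(\rL_n)$ is already recorded via the commutative diagram \eqref{E:Com2}, so only the reverse inclusion needs the Chang--Papanikolas input; the simplicity of $H_n$ from Proposition~\ref{P:simpletensor2} is what makes the centralizer description sharp (it forces $\Gamma_{H_n}$ to act irreducibly, ruling out proper subgroups of the centralizer by a standard argument with the Tannakian dimension of $H_n$ and its dual).

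For the dimension count: $\Cent_{\GL_r/\FF_q(t)}(\rL_n)$ is, over $\overline{\FF_q(t)}$, the unit group of the centralizer algebra of the division algebra $\rL_n$ acting on an $r$-dimensional space; since $\rL_n = K_\phi$ is a field of degree $\bs$ dividing $r$, the $\overline{\FF_q(t)}$-points form $\GL_{r/\bs}$ copies worth, i.e. the centralizer algebra is $\Mat_{r/\bs}(\overline{\FF_q(t)}\otimes_{\FF_q(t)}\rL_n) \cong \Mat_{r/\bs}(\overline{\FF_q(t)})^{\bs}$, of dimension $\bs\cdot(r/\bs)^2 = r^2/\bs$. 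Hence $\dim\Gamma_{H_n} = r^2/\bs$. The transcendence-degree equality then comes from Theorem~\ref{T:TannakianMain} applied to $H_n$ with rigid analytic trivialization $\Psi_n = \Psi_0\Omega^n$: $\trdeg_{\oK}\oK(\Psi_n(\theta)) = \dim\Gamma_{H_n}$. It remains to identify the field $\oK(\Psi_n(\theta))$ with $\oK\bigl(\bigcup_{i=1}^r\bigcup_{j=0}^{r-1}\widetilde\pi^{n+1}F_{G_0,\delta_j}(\lambda_i)\bigr)$; this uses that the entries of $\Psi_\phi(\theta) = (\Upsilon(\theta)V)^{-1}$ are, up to $\oK^\times$-multiples and the Legendre relation $\det\mathcal P = \widetilde\pi/\xi$, precisely the quasi-periods $F_{G_0,\delta_j}(\lambda_i) = f_{\lambda_i}^{(j)}(\theta)$ (and $-\lambda_i$ for $j=0$) recorded in \eqref{E:quasiperiod}--\eqref{E:periodmat}, while $\Omega(\theta)^n = (-1/\widetilde\pi)^n$ contributes the power of $\widetilde\pi$; passing to the cofactor matrix $\Psi_0$ multiplies through by $\det\Psi_\phi \in \oK^\times\cdot\widetilde\pi^{-1}$, which accounts for the shift from $\widetilde\pi^{n}$ to $\widetilde\pi^{n+1}$.

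\textbf{Main obstacle.} The delicate point is not the Tannakian formalism but the bookkeeping in the last step: showing that the field generated by the entries of $\Psi_n(\theta) = \Psi_0(\theta)\Omega(\theta)^n$ is exactly $\oK\bigl(\bigcup_{i,j}\widetilde\pi^{n+1}F_{G_0,\delta_j}(\lambda_i)\bigr)$ — with the correct power of $\widetilde\pi$ and no spurious extra generators or collapses — requires carefully tracking how the cofactor (adjugate) operation and the scalar twist by $\Omega^n$ interact with the explicit matrices $\Upsilon$, $V$, and the Legendre relation \eqref{E:periodLeg}. I expect this to be where the real work lies; the equality of Galois groups, by contrast, is essentially immediate once one cites \cite[Thm.~3.5.4]{CP12}, Proposition~\ref{P:equalend2}, and Proposition~\ref{P:simpletensor2}.
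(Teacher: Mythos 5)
Your second half—Papanikolas's Theorem~\ref{T:TannakianMain} plus the bookkeeping that identifies $\oK(\Psi_n(\theta))$ with the Drinfeld quasi-period field up to algebraic extensions via $\det\Upsilon(\theta)=(-1)^{r-1}\widetilde{\pi}/\xi$—is essentially the paper's argument, but you have the logical order backwards. In the paper that transcendence computation is the source of the group-theoretic statement: one first gets $\Gamma_{H_n}\subseteq\Cent_{\GL_r/\FF_q(t)}(\rL_n)=\Gamma_{H_\phi}$ from \eqref{E:Com2}, Proposition~\ref{P:equalend2} and \cite[Thm.~3.5.4]{CP12}, then proves $\dim\Gamma_{H_n}=\trdeg_{\oK}\oK(\Psi_n(\theta))=\trdeg_{\oK}\oK(\mathcal{P})=r^2/\bs$ by the Legendre-relation field comparison, and only then concludes $\Gamma_{H_n}=\Gamma_{H_\phi}$ because the centralizer is connected and of that dimension. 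You instead want to prove the equality of groups first and read off the dimension afterwards, and the arguments you offer for that equality do not hold up, so the step you call ``essentially immediate'' is precisely where your proof has a gap.

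Concretely: (i) from $C_{n+1}\in\sT_{H_\phi}$ you only get $\sT_{H_n}\subseteq\sT_{H_\phi}$, and the asserted equality of categories is in fact false in general. As a representation of $\Gamma_{H_\phi}=\Cent_{\GL_r/\FF_q(t)}(\rL_\phi)$ one has $H_n^{\mathrm{Betti}}\cong(H_\phi^{\mathrm{Betti}})^{\vee}\otimes(C_1^{\mathrm{Betti}})^{\otimes(n+1)}$ with $\Gamma_{H_\phi}$ acting on $C_1^{\mathrm{Betti}}$ through $\det$ (via $\wedge^r H_\phi\cong C_1$); the kernel of this representation is the nontrivial finite group scheme of scalars $c\Id_r$ with $c^{r(n+1)-1}=1$, so the restriction functor $\sT_{H_n}\hookrightarrow\sT_{H_\phi}$ is not an equivalence (for instance $C_1\notin\sT_{H_n}$), and what you get is only a surjection $\Gamma_{H_\phi}\twoheadrightarrow\Gamma_{H_n}$, i.e.\ an upper bound on $\dim\Gamma_{H_n}$—the opposite of what you need. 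One could salvage the dimension from the finiteness of that kernel, but you did not argue it, and you would still need connectedness of the centralizer to obtain the stated equality of subgroups of $\GL_{r/\FF_q(t)}$. (ii) Your ``cleaner'' difference-equation alternative is also gapped: with $\Psi_n=\Psi_0\Omega^n$ one has $\widetilde{\Psi_n}=(\Omega^n\otimes1)^{-1}(1\otimes\Omega^n)\,\widetilde{\Psi_0}$ in $\GL_r(\LL\otimes_{\oK(t)}\LL)$, and $(\Omega\otimes1)^{-1}(1\otimes\Omega)$ is not $1$ there; whether the $\Omega^n$-twist enlarges the group by a $\Gm$-factor is exactly the issue, and excluding it is the content of the Legendre-relation argument \eqref{E:periodLeg}, not something ``manifest''. (iii) The parenthetical claim that simplicity of $H_n$ ``rules out proper subgroups of the centralizer'' is not a valid argument: many proper closed subgroups of $\Cent_{\GL_r/\FF_q(t)}(\rL_n)$ act irreducibly, so irreducibility alone cannot force equality. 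In short, you possess the right ingredients (the field identification in your last paragraph), but as arranged the crucial lower bound $\dim\Gamma_{H_n}\geq r^2/\bs$ is unsupported, and the connectedness step needed to pass from equal dimensions to equal groups is missing.
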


\begin{proof}
	By Proposition~\ref{P:equalend2} and \cite[Thm.~3.5.4]{CP12} we see that 
	\[\Gamma_{H_n}\subseteq \Cent_{\GL_r/\FF_q(t)}(\rL_n) = \Gamma_{H_{\phi}}.\]
	Recall the period matrix $\mathcal{P}$ introduced in \eqref{E:periodmat} for the Drinfeld $\bA$-module $G_0=(\GG_{a/\oK},\phi)$ given in \eqref{E:DrinfeldDef}.
Using \eqref{E:quasiperiod}, \eqref{E:periodLeg}, and \cite[(3.4.3)]{CP12}, we see that 
\[
\det \Upsilon(\theta)=(-1)^{r-1}\det \mathcal{P}=(-1)^{r-1}\frac{\tilde{\pi}}{\xi}
\]
and
\[
\oK(\Psi_n(\theta))=\oK(\tilde{\pi}^{-n-1}\Upsilon(\theta))\subset \oK(\Upsilon(\theta),\tilde{\pi})=\oK(\Upsilon(\theta))=\oK(\mathcal{P})=\oK(\mathcal{P},\tilde{\pi}) \supset \oK(\tilde{\pi}^{n+1}\mathcal{P}).
\]
Since these inclusions are algebraic extensions, by \cite[Thm.~3.5.4]{CP12} and Theorem \ref{T:TannakianMain}, it follows that
\[
\dim \Gamma_{H_n}=\trdeg_{\oK}\oK(\Psi_n(\theta))=\trdeg_{\oK}\oK(\tilde{\pi}^{n+1}\mathcal{P})=\trdeg_{\oK}\oK(\mathcal{P})=r^2/\bs.
\]    
Since $\Cent_{\GL_r/\FF_q(t)}(\rL_n)$ is connected, we have the equality $\Gamma_{H_n} = \Gamma_{H_{\phi}}$.
\end{proof}

\subsection{Endomorphisms of \texorpdfstring{$\mathcal{E}_n$}{En}}\label{S:EndEn} 
To analyze endomorphisms of $\mathcal{E}_n$, we start by introducing hyperderivatives. Setting $\CC_{\infty}((t))$ to be the field of formal Laurent series of $t$ with coefficients in $\CC_{\infty}$, we define \textit{the $j$-th hyperdifferential operator} $\partial^{j}_{t}:\CC_{\infty}((t))\to \CC_{\infty}((t))$ by
\[
\partial^{j}_{t}\bigg(\sum_{i\geq i_0}c_it^i\bigg):=\sum_{i\geq i_0}c_i\binom{i}{j}t^{i-j}\text{,  }\ \ \ c_i\in \CC_{\infty},
\]
where $\binom{i}{j}$ denotes the usual binomial coefficient but modulo $p$. 
For any element $C=(C_{ik})\in \Mat_{m\times \ell}(\CC_{\infty}((t)))$, we also set $\partial^{j}_{t}(C):=(\partial^{j}_{t}(C_{ik}))$.

 Our aim in this subsection is to explicitly construct an isomorphism 
\begin{equation*}\label{E:isomorphism}
\End(G_0) \cong \End(\mathcal{E}_n).
\end{equation*} of rings. We start with stating a proposition which can be proven by using the ideas of the proof of \cite[Prop. 4.1.1]{CP12}.

\begin{proposition}\label{P:entriesmor}
	Given $g\in \rL_n $, let $\rG = (\rG_{ij}) \in \Mat_r(\oK(t))$ satisfy $[g(h_1),\dots,g(h_r)]^{{\tr}}= \rG\bsh_{\mathcal{E}_n}$. Then  $\rG_{21}(\theta) =\dots = \rG_{r1}(\theta)= \rG_{12}(\theta)=\dots = \rG_{1r}(\theta)  = 0$.
\end{proposition}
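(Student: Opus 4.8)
The plan is to follow the strategy of the proof of \cite[Prop.~4.1.1]{CP12}, adapted to the dual $t$-motive $\sH_n=\sH_{\mathcal{E}_n}$. First I would record the reductions. After multiplying $g$ by a suitable nonzero element of $\FF_q[t]$ — which is harmless, since $\theta$ is transcendental over $\FF_q$ and hence no such element vanishes at $t=\theta$ — we may, by \eqref{E:homsdualpre} and Remark~\ref{R:isomorphismsofvss}, assume $g\in\End_{\oK[t,\sigma]}(\sH_n)$ and so $\rG\in\Mat_r(\oK[t])$. The identity $\sigma g=g\sigma$ reads $\rG^{(-1)}\Phi_n=\Phi_n\rG$, and since $\Phi_n=\Phi_0(t-\theta)^n$ with the scalar $(t-\theta)^n$ central, this is equivalent to $\rG^{(-1)}\Phi_0=\Phi_0\rG$. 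Let $\bsm\in\oK^r$ be the first column of $\Phi_0$, i.e.\ $\bsm=\tfrac{(-1)^{r-1}}{a_r^{(-r)}}[a_1^{(-1)},\dots,a_r^{(-r)}]^{\tr}$, which is nonzero; by \eqref{E:extmap} the $j$-th column of $\Phi_0$ is $\tfrac{(-1)^{r-1}}{a_r^{(-r)}}(t-\theta)e_{j-1}$ for $2\le j\le r$, where $e_1,\dots,e_r$ is the standard basis, so $\Phi_0(\theta)=\bsm\,e_1^{\tr}$ has rank one with kernel $\langle e_2,\dots,e_r\rangle$.

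The vanishing of the first row is the easy part. For $2\le j\le r$, right-multiplying $\rG^{(-1)}\Phi_0=\Phi_0\rG$ by $e_j$ gives $\Phi_0\rG e_j=\tfrac{(-1)^{r-1}}{a_r^{(-r)}}(t-\theta)\,\rG^{(-1)}e_{j-1}$, which vanishes at $t=\theta$; hence $\Phi_0(\theta)\rG(\theta)e_j=0$, i.e.\ $\rG(\theta)e_j\in\ker\Phi_0(\theta)=\langle e_2,\dots,e_r\rangle$. Thus $\rG(\theta)$ stabilizes $\langle e_2,\dots,e_r\rangle$, which is exactly the assertion $\rG_{12}(\theta)=\dots=\rG_{1r}(\theta)=0$.

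The vanishing of the first column is the crux, and here I expect the main obstacle. Right-multiplying $\rG^{(-1)}\Phi_0=\Phi_0\rG$ by $e_1$ yields the polynomial identity $\rG^{(-1)}\bsm=\rG_{11}\bsm+\tfrac{(-1)^{r-1}}{a_r^{(-r)}}(t-\theta)[\rG_{21},\dots,\rG_{r1},0]^{\tr}$, which at $t=\theta$ only gives $\rG^{(-1)}(\theta)\bsm=\rG_{11}(\theta)\bsm$; the entries $\rG_{21}(\theta),\dots,\rG_{r1}(\theta)$ are instead obtained from the first hyperderivative $\partial_t$ of this identity at $t=\theta$, which is why hyperderivatives are introduced. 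To evaluate that hyperderivative I would, following \cite{CP12}, pass to the Drinfeld $t$-motive $\sH_\phi$: by Proposition~\ref{P:equalend2}, $(\rG^{\tr})^{(-1)}\Phi_\phi=\Phi_\phi\rG^{\tr}$, and from $\mathcal{F}_g=\Psi_n^{-1}\rG\Psi_n\in\Mat_r(\FF_q(t))$ together with $\Psi_n=\Psi_0\Omega^n$ (so the scalar $\Omega^n$ cancels), $\Psi_0=\det(\Psi_\phi)(\Psi_\phi^{-1})^{\tr}$ and $\Psi_\phi=(\Upsilon V)^{-1}$ one obtains
\[
\rG^{\tr}(\theta)=V^{-1}\,\Upsilon(\theta)^{-1}\,\mathcal{F}_g(\theta)^{\tr}\,\Upsilon(\theta)\,V ,
\]
where the entries of $\Upsilon$ are the twisted Anderson generating functions $f_{\lambda_i}^{(j)}(t)$. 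Here $\mathcal{F}_g$ records the action of $g$ on the Betti realization, which, since $g$ corresponds under Remark~\ref{R:isomorphismsofvss} to an honest endomorphism of $G_0$, is explicitly described by the action on the periods $\lambda_1,\dots,\lambda_r$; using the functional equation $\phi(t)\cdot f_{\lambda_i}(t)=t\,f_{\lambda_i}(t)$ of Lemma~\ref{L:AGFproperties}(ii) — which expresses $f_{\lambda_i}^{(r)}(\theta)$ in terms of $f_{\lambda_i}^{(1)}(\theta),\dots,f_{\lambda_i}^{(r-1)}(\theta)$ — and the fact that $f_{\lambda_i}(t)$ has a simple pole at $t=\theta$ with $\Res_{t=\theta}f_{\lambda_i}(t)=-\lambda_i$, which links $\Upsilon(\theta)$ to the period matrix $\mathcal{P}$ of \eqref{E:periodmat} (compare $\det\Upsilon(\theta)=(-1)^{r-1}\det\mathcal{P}$), one reads off from the displayed conjugate that the off-diagonal entries of its first column vanish, exactly as in \cite[Prop.~4.1.1]{CP12}; transposing back gives $\rG_{21}(\theta)=\dots=\rG_{r1}(\theta)=0$. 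The delicate point is that the functional equation $\rG^{(-1)}\Phi_n=\Phi_n\rG$ and its transpose on $\sH_\phi$ are equivalent and, evaluated directly at $t=\theta$, constrain only the first row, so extracting the first column genuinely requires the hyperdifferential calculus together with this analytic input on the Anderson generating functions at $t=\theta$.
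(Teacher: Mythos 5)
Your reduction to $\rG\in\Mat_r(\oK[t])$ (clearing a denominator in $\FF_q[t]$ via \eqref{E:homsdualpre}, which also settles regularity at $t=\theta$) and your kernel argument for the first row are correct: since $\Phi_0(\theta)=\bsm e_1^{\tr}$ has kernel $\langle e_2,\dots,e_r\rangle$, the relation $\rG^{(-1)}\Phi_0=\Phi_0\rG$ evaluated at $t=\theta$ indeed forces $\rG_{12}(\theta)=\dots=\rG_{1r}(\theta)=0$; this is a self-contained substitute for the paper's citation of \cite[Prop.~4.1.1(b)]{CP12}. Likewise your identity $\rG^{\tr}(\theta)=V^{-1}\Upsilon(\theta)^{-1}\mathcal{F}_g(\theta)^{\tr}\Upsilon(\theta)V$ is just the specialization at $t=\theta$ of the paper's relation \eqref{E:rG}. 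The problem is the final step, which is exactly the hard half of the proposition.

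First, there is a transpose mix-up: \cite[Prop.~4.1.1]{CP12}, applied to the Drinfeld endomorphism matrix $\rG^{\tr}$ (as in Proposition~\ref{P:equalend2}), controls the off-diagonal entries of the \emph{first column of $\rG^{\tr}$}, i.e.\ the first row of $\rG$ --- the part you have already proved; ``reading off the first column of the displayed conjugate, exactly as in CP12, and transposing back'' therefore does not reach $\rG_{21}(\theta),\dots,\rG_{r1}(\theta)$, and CP12 contains no first-row analogue you could quote. Second, and more importantly, the mechanism that actually yields the first-column vanishing is missing from your sketch. The paper specializes \eqref{E:rG} at $t=\theta$ row by row and, using \eqref{E:quasiperiod}, obtains for each $2\le\ell\le r$ the relation \eqref{E:etaell}, in which the period $\lambda_1$ occurs with coefficient $-\rG_{\ell 1}(\theta)$ while every other term is a $\oK$-linear combination of quasi-logarithms $F_{G_0,\delta_j}(\lambda_i)$. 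The conclusion $\rG_{\ell 1}(\theta)=0$ then follows only because $\lambda_1$ together with the quasi-logarithms at a maximal $K_{\phi}$-linearly independent set of periods is linearly independent over $\oK$, which is supplied by Theorem~\ref{T:equalitygamma2}, i.e.\ ultimately by the Chang--Papanikolas algebraic independence theorem for periods and quasi-periods of Drinfeld modules. This transcendence input appears nowhere in your proposal: the AGF functional equation, the residue formula $\Res_{t=\theta}f_{\lambda_i}=-\lambda_i$, and hyperderivatives of $\rG^{(-1)}\Phi_0=\Phi_0\rG$ are formal identities and cannot exclude a nontrivial $\oK$-linear relation among $\lambda_1$ and the quasi-logarithms. (Your hyperderivative step only rewrites $[\rG_{21}(\theta),\dots,\rG_{r1}(\theta),0]^{\tr}$ in terms of $\partial_t^1$-data of $\rG$ and $\rG^{(-1)}$; the paper makes no use of hyperderivatives in this proof.) As it stands, the proof of $\rG_{21}(\theta)=\dots=\rG_{r1}(\theta)=0$ is a genuine gap.
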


\begin{proof}
	Since, as in the proof of Proposition~\ref{P:equalend2}, each $\rG^{\tr}$ produces an element $g$ in $\rL_{\phi}$, the part  $\rG_{12}(\theta)=\dots = \rG_{1r}(\theta)  = 0$ follows from \cite[Prop.~4.1.1(b)]{CP12}. By \cite[Prop.~4.1.1(a)]{CP12}, $\rG_{j1}$ is regular at $t=\theta$ for each $1\leq j \leq r$. Note that by the definitions of  $\Psi_n$ and $\Psi_{\phi}$, we have 
	\begin{equation}\label{E:rG}
	V^{\tr}\Upsilon^{\tr}\mathcal{F}_{g} =\rG V^{\tr}\Upsilon^{\tr},
	\end{equation}
 where $\mathcal{F}_g$ is defined in \eqref{E:fixedPsi}. Recall the fixed $\bA$-basis $\{\lambda_1,\dots,\lambda_r\}$ for $\Lambda_{G_0}$ given in \S\ref{SS:bids}. Specializing  both sides of \eqref{E:rG} at $t=\theta$ and using \eqref{E:quasiperiod} we obtain for $2 \leq \ell \leq r$
	\begin{equation}\label{E:etaell}
	\begin{split}
	&-\sum_{i=1}^r (\mathcal{F}_{g})_{i1}(\theta)(a_\ell^{(-\ell+1)}F_{G_0,\delta_1}(\lambda_i)+a_{\ell+1}^{(-\ell+1)}F_{G_0,\delta_2}(\lambda_i)+\dots+a_r^{(-\ell+1)}F_{G_0,\delta_{r-\ell+1}}(\lambda_i))\\
	&  = -\rG_{\ell 1}(\theta)\lambda_1 + (\rG_{\ell 2}(\theta)a_2^{(-1)}+\dots + \rG_{\ell r}(\theta)a_r^{(-r+1)})F_{G_0,\delta_1}(\lambda_1)\\
	&\ \ \  +(\rG_{\ell 2}(\theta)a_3^{(-1)}+\dots + \rG_{\ell (r-1)}(\theta)a_r^{(-r+2)})F_{G_0,\delta_2}(\lambda_1) +\cdots \\
	&\ \ \  + (\rG_{\ell 2}(\theta)a_{r-1}^{(-1)}+ a_r^{(-1)}\rG_{\ell 3}(\theta)a_r^{(-2)})F_{G_0,\delta_{r-2}}(\lambda_1)+ \rG_{\ell 2}(\theta)a_r^{(-1)}F_{G_0,\delta_{r-1}}(\lambda_1).
	\end{split}
	\end{equation}
	By Theorem~\ref{T:equalitygamma2}, we see that $\{\lambda_1\}$ along with $\cup_{j=1}^{r-1}\{F_{G_0,\delta_j}(\pi_1), \dots, F_{G_0,\delta_j}(\pi_{r/\bs}))\}$ for any maximal $K_0$-linearly independent subset $\{\pi_1, \dots, \pi_{r/\bs}\}$ of $\Lambda_{G_0}$ are linearly independent over $\oK$. It follows that the coefficients of $\lambda_1$ for each $2 \leq \ell \leq r$ in \eqref{E:etaell} are zero. Hence, we have $\rG_{21}(\theta) = \dots = \rG_{r1}(\theta) = 0$. This completes the proof. 
\end{proof}

Recall the dual $t$-motive $\sH_{{\phi}}$ of the Drinfeld $\bA$-module $G_0=(\GG_{a/\oK},\phi)$ from Example \ref{Ex:tmotive} and note that $\{n_1\}$ is a $\oK[\sigma]$-basis for $\sH_{\phi}$ \cite[Sec. 1.5.6]{BP20}. By Proposition \ref{P:fullyfaithful}, any $\eta \in \End(G_0)$ defines a morphism $\epsilon_{\eta}\in \End_{\oK[t, \sigma]}(\sH_{{\phi}})$ given by $\epsilon_{\eta}(n) = n \eta^{*}$ for all $n \in \sH_{{\phi}}$. Let $\rF=(\rF_{ij}) \in \Mat_{r}(\oK[t])$ be such that $[\epsilon_{\eta}(n_1),\dots,\epsilon_{\eta}(n_r)]^{{\tr}}= \rF\bsn$. Thus, we have 
\begin{equation}\label{E:actiondrinfelfnor}
\epsilon_{\eta}(n_1)=\epsilon_{\eta}(1)=[\rF_{11},\dots,\rF_{1r}]\bsn=\eta^{*}.
\end{equation}

By Proposition \ref{P:fullyfaithful} and Proposition~\ref{P:equalend2}, there exist a unique element $\rP_{\eta}\in \End(\mathcal{E}_n)$ and a map  $\epsilon_{\rP_{\eta}}\in \End(\sH_n)$ such that
$[\epsilon_{\rP_{\eta}}(h_1),\dots,\epsilon_{\rP_{\eta}}(h_r)]^{{\tr}}= \rF^{\tr}\bsh_{\mathcal{E}_n}$ and 
\begin{equation}\label{E:acthom22}
\begin{bmatrix}
\epsilon_{\rP_{\eta}}(s_{1,rn+r-1})\\ \vdots\\\vdots\\ \epsilon_{\rP_{\eta}}(s_{rn+r-1,rn+r-1})\end{bmatrix} =\begin{bmatrix}s_{1,rn+r-1}\rP_{\eta}^{*}\\ \vdots\\\vdots\\s_{rn+r-1,rn+r-1}\rP_{\eta}^{*}
\end{bmatrix}.
\end{equation}

For the matrix $\rB$ defined in \eqref{E:CchangeofC}, let $(\rB^{-1}\rF^{\tr})'\in \Mat_{r-1\times r}(\oK[t])$ be the matrix formed after omitting the first row of $\rB^{-1}\rF^{\tr}$. By using \eqref{basistnsigma2} and noting that $\bsh_{\mathcal{E}_n}=\rB\chi_n^{-1} \bsell$, we obtain
\begin{equation}\label{E:actionhom2}
\begin{bmatrix}
\epsilon_{\rP_{\eta}}(s_{1,rn+r-1}) \\ \epsilon_{\rP_{\eta}}(s_{2,rn+r-1}) \\ \vdots \\ \vdots\\\epsilon_{\rP_{\eta}}(s_{rn+r-2,rn+r-1})\\ \epsilon_{\rP_{\eta}}(s_{rn+r-1,rn+(r-1)})
\end{bmatrix} = 
\begin{bmatrix}
0&\dots&\dots&\dots&0& ((\rB\chi_n^{-1})^{-1}\rF^{\tr})'\rB\chi_n^{-1}(t-\theta)^n\\
\vdots& &&&\vdots& (\rB\chi_n^{-1})^{-1}\rF^{\tr}\rB\chi_n^{-1}(t-\theta)^{n-1}\\
\vdots&& &&\vdots& \vdots\\
\vdots&&& &\vdots& (\rB\chi_n^{-1})^{-1}\rF^{\tr}\rB\chi_n^{-1}(t-\theta)\\
0&\dots&\dots&\dots&0& (\rB\chi_n^{-1})^{-1}\rF^{\tr}\rB\chi_n^{-1}\\
\end{bmatrix}\begin{bmatrix}
s_{1,rn+r-1} \\ s_{2,rn+r-1} \\ \vdots \\ \vdots\\s_{rn+r-2,rn+r-1}\\ s_{rn+r-1,rn+r-1}
\end{bmatrix}.
\end{equation}
By using the $\oK[t]$-module action on $\sH_{n}$ given in \eqref{E:action2},  for each $i\geq 0$ and $1\leq j\leq rn+r-1$ the $\ell$-th entry of $\rd(t^i\cdot s_{j,rn+r-1})\in \Mat_{1 \times rn+r-1}(\oK)$ is equal to 
\begin{equation}\label{E:actionmor2}
\begin{cases} 
\pd_t^k(t^i)|_{t=\theta}, &  \text{if}\, \, \ell =j-rk  \\
0, &\text{if} \, \,  \ell \neq j-rk.
\end{cases}
\end{equation}
Let us set $\bT:=\rB^{-1}\rF^{\tr} \rB\in \Mat_r(\oK[t])$. Let $\rS'$ denote the matrix formed by throwing out the first column of $\bT$ and let $\bT'$ be the matrix formed by the lower right $(r-1)\times (r-1)$ block of $\bT$. Using \eqref{E:actionmor2}, one can compare \eqref{E:acthom22} and \eqref{E:actionhom2} to calculate 
\begin{equation}\label{E:dEnd2}
\rd\rP_{\eta} = 
\left.\begin{bmatrix}
(\bT')^{\tr} & \pd_t^{1}(\rS')^{\tr} & \dots & \dots & \dots & \pd_{t}^{n-1}(\rS')^{\tr} & \pd_t^n(\rS')^{\tr}\\
& \bT^{\tr} & \pd_t^1(\bT)^{\tr} & \dots & \dots & \pd_t^{n-2}(\bT)^{\tr}& \pd_t^{n-1}(\bT)^{\tr}\\
&&\ddots&\ddots&\ddots&\vdots&\pd_t^{n-2}(\bT)^{\tr}\\
&&&\ddots&\ddots&\vdots&\vdots\\
&&&&\ddots&\vdots&\vdots\\
&&&&&\bT^{\tr}&\pd_t^1(\bT)^{\tr}\\
&&&&&&\bT^{\tr}
\end{bmatrix}\right|_{t=\theta} \in \Mat_{rn+r-1}(\oK).
\end{equation}
Moreover, if $\rF^{\tr} := (\rF'_{i,j})$ then by Proposition \ref{P:entriesmor} and the fact that $\rF_{11}'(\theta)=\rF_{11}(\theta)=\rd\eta$, we can write $\bT^{\tr}$ 
of the form
\begin{equation}\label{E:W2}
\bT^{\tr}_{|t=\theta}=\begin{bmatrix}
\rd\eta & 0 & \dots & 0\\
0 & * &\dots& *\\
\vdots &\vdots&\ddots&\vdots\\
0 & * &\dots& *
\end{bmatrix}.
\end{equation}

\begin{proposition}\label{P:rings2}
	The map $\End(G_0) \to \End(\mathcal{E}_n)$ given by 
	$
	\eta \rightarrow \rP_{\eta}
	$ for each $\eta \in \End(G_0)$ is an isomorphism of rings and $\rd \rP_{\eta}$ is of the form \eqref{E:dEnd2}. Moreover, $\End(\mathcal{E}_n)$ can be identified with $\{\rd\rP \ \ |  \rP \in \End(\mathcal{E}_n)\}$ which is an integral  domain.
\end{proposition}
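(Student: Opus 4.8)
The plan is to establish three things: that $\eta \mapsto \rP_\eta$ is a ring homomorphism, that it is bijective, and that its image can be identified with the integral domain $\{\rd\rP : \rP \in \End(\mathcal{E}_n)\}$. For the ring homomorphism property, I would trace through the construction: by Proposition~\ref{P:fullyfaithful} the assignment $\eta \mapsto \epsilon_\eta$ is a ring homomorphism $\End(G_0) \to \End_{\oK[t,\sigma]}(\sH_\phi)$, and the correspondence $\rF \mapsto \rF^{\tr}$ sending $\End_{\oK[t,\sigma]}(\sH_\phi)$ to $\End_{\oK[t,\sigma]}(\sH_n)$ (which appears in the proof of Proposition~\ref{P:equalend2}, coming from $\Phi_n = \rB^{(-1)}\rA\rB^{-1}$ and the relation $(\rG^{\tr})^{(-1)}\Phi_\phi = \Phi_\phi\rG^{\tr}$) is an anti-isomorphism of rings; composing with $\rG \mapsto \rG^{\tr}$ on the $\sH_n$ side and using Proposition~\ref{P:fullyfaithful} again to pass back to $\End(\mathcal{E}_n)$, the two transposes cancel the anti-homomorphism twists and we get a genuine ring homomorphism. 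I would verify additivity and multiplicativity at the level of the matrices $\rF$ and then invoke fidelity of the functor $G \mapsto \sH_G$ (Proposition~\ref{P:fullyfaithful}) to descend the identity to $\End(\mathcal{E}_n)$.

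Next, for bijectivity: injectivity is immediate since $\rP_\eta = 0$ forces $\rF = 0$ (by \eqref{E:acthom22} and \eqref{E:actionhom2}), hence $\eta^* = 0$ by \eqref{E:actiondrinfelfnor}, hence $\eta = 0$. For surjectivity, given any $\rP \in \End(\mathcal{E}_n)$, Proposition~\ref{P:fullyfaithful} yields $\epsilon_\rP \in \End_{\oK[t,\sigma]}(\sH_n)$, and by Proposition~\ref{P:equalend2} (specifically the explicit correspondence $\rG \leftrightarrow \rG^{\tr}$ between $\End_{\oK[t,\sigma]}(\sH_n)$ and $\End_{\oK[t,\sigma]}(\sH_\phi)$ established in its proof) we obtain a matrix $\rF \in \Mat_r(\oK[t])$ representing an element of $\End_{\oK[t,\sigma]}(\sH_\phi)$; by Proposition~\ref{P:fullyfaithful} this corresponds to an $\eta \in \End(G_0)$, and chasing the constructions shows $\rP_\eta = \rP$. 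The statement that $\rd\rP_\eta$ has the form \eqref{E:dEnd2} is exactly the computation carried out in the discussion preceding the proposition (comparing \eqref{E:acthom22} with \eqref{E:actionhom2} via \eqref{E:actionmor2}), so I would simply cite that.

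For the final clause, I would show the $\oK$-linear map $\rP \mapsto \rd\rP$ is injective on $\End(\mathcal{E}_n)$: by Remark~\ref{R:isomorphismsofvss}(ii), $\End(G_0)$ embeds in $\CC_\infty$ via $\eta \mapsto \rd\eta$, and the block-triangular form \eqref{E:dEnd2}--\eqref{E:W2} shows that $\rd\rP_\eta$ has $\rd\eta$ in its top-left corner, so $\rd\rP_\eta = 0$ implies $\rd\eta = 0$ implies $\eta = 0$ implies $\rP_\eta = 0$. Hence $\rP \mapsto \rd\rP$ is a ring isomorphism onto its image $\{\rd\rP : \rP \in \End(\mathcal{E}_n)\}$. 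Since $\End(G_0)$ is an order in the division algebra $K_\phi$ (Remark~\ref{R:isomorphismsofvss}(ii)), it is an integral domain, and therefore so is $\End(\mathcal{E}_n)$ and its isomorphic image. I expect the main obstacle to be bookkeeping: carefully checking that the two applications of transposition (once in passing $\sH_\phi \rightsquigarrow \sH_n$, once in the $\epsilon_{(-)}$ construction via $B \mapsto B^*$ with $(BD)^* = D^*B^*$) compose to an honest ring homomorphism rather than an anti-homomorphism, and confirming that all the conjugations by $\rB$, $\chi_n$, and the powers of $(t-\theta)$ in \eqref{E:actionhom2} are consistent — none of this is deep, but it requires threading several change-of-basis matrices through without sign or transpose errors.
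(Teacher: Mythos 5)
Your treatment of the first assertion (ring homomorphism, injectivity via faithfulness of $G\mapsto\sH_G$, surjectivity via Proposition~\ref{P:fullyfaithful} and Proposition~\ref{P:equalend2}, and citing the preceding computation for the shape \eqref{E:dEnd2}) matches the paper's argument. The genuine gap is in your proof of the last clause, specifically for $n=0$. You argue that $\rP\mapsto\rd\rP$ is injective because ``\eqref{E:dEnd2}--\eqref{E:W2} shows that $\rd\rP_\eta$ has $\rd\eta$ in its top-left corner.'' That is not what \eqref{E:dEnd2} says: the top-left diagonal block of $\rd\rP_\eta$ is $(\bT')^{\tr}|_{t=\theta}$, where $\bT'$ is the \emph{lower-right} $(r-1)\times(r-1)$ block of $\bT$, i.e.\ exactly the block from which the entry $\rd\eta$ has been removed (by \eqref{E:W2}, $\rd\eta$ sits in the $(1,1)$ entry of $\bT^{\tr}$, and the rest of its first row and column vanish). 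For $n\geq 1$ your conclusion survives, because $\bT^{\tr}$ itself occurs among the remaining diagonal blocks of \eqref{E:dEnd2}, so $\rd\rP_\eta=0$ still forces $\rd\eta=0$; but for $\mathcal{E}_0$ the matrix $\rd\rP_\eta$ equals $(\bT')^{\tr}|_{t=\theta}$ alone and contains no copy of $\rd\eta$, so your chain ``$\rd\rP_\eta=0\Rightarrow\rd\eta=0$'' has no justification in that case.

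The paper closes exactly this case by a different argument: for $\mathcal{E}_0$, if $\rd\rP_\eta=0$, then the functional equation and uniqueness of the exponential give $\rP_\eta\Exp_{\mathcal{E}_0}=\Exp_{\mathcal{E}_0}\,\rd\rP_\eta=0$ in $\Mat_{r-1}(\oK)[[\tau]]$, and since the power series $\Exp_{\mathcal{E}_0}$ has invertible constant term this forces $\rP_\eta=0$, whence $\eta=0$ by the already-established injectivity of $\eta\mapsto\rP_\eta$. You should either add this (or an equivalent) argument for $n=0$, or restrict your corner-entry argument explicitly to $n\geq1$ and supply a separate proof for $\mathcal{E}_0$. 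Two further small points: even for $n\geq1$, $\rd\eta$ appears in the $\bT^{\tr}$ blocks, not in the top-left corner, so the wording needs correcting; and $K_\phi$ is a field (the fraction field of the commutative ring $\End(G_0)\subset\CC_\infty$ from Remark~\ref{R:isomorphismsofvss}(ii)), not a possibly noncommutative division algebra, though this does not affect the integral-domain conclusion.
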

\begin{proof} We start with proving the first part for $n\in \ZZ_{\geq 0}$. The injectivity of the map in the first assertion follows from the above discussion and the surjectivity can be also seen by using Proposition \ref{P:fullyfaithful} and Proposition \ref{P:equalend2}. We now prove the second assertion for $n\in \ZZ_{\geq 1}$. By the isomorphism given in Remark \ref{R:isomorphismsofvss}, the set $\{\rd\eta \ \ | \eta \in \End(G_0)\}$ forms an integral domain. Furthermore one can see, by using Proposition \ref{P:fullyfaithful}, \eqref{E:W2}, and the first part of the proposition, that the map $ \{\rd\eta \ \ | \eta \in \End(G_0)\}
	\to \{\rd \rP\ \  | \rP \in \End(\mathcal{E}_n)\}$ given by $\rd\eta\mapsto \rd \rP_{\eta}$ is a ring isomorphism and hence, $\{\rd\rP \ \ |  \rP \in \End(\mathcal{E}_n)\}$ is an integral domain. Finally, we prove the second assertion for $\mathcal{E}_0$. Consider the map $ \{\rd\eta \ \ | \eta \in \End(G_0)\}
	\to \{\rd \rP\ \  | \rP \in \End(\mathcal{E}_0)\}$ given by $\rd\eta\mapsto \rd \rP_{\eta}$. Proposition \ref{P:fullyfaithful} implies that it is a ring homomorphism. For injectivity, suppose that $\rd \rP_{\eta}=0$. Uniqueness of the exponential map implies the following equality
	\[
	\rP_{\eta}\Exp_{\mathcal{E}_n}=\Exp_{\mathcal{E}_n}\rd\rP_{\eta}
	\]
	which holds in $\Mat_{r-1}(\oK)[[\tau]]$. This implies that $\rP_{\eta}=0$. Thus, by the first part, we have $\eta=0$ and hence, $\rd\eta=0$. For the surjectivity, note from the first part of the proposition that for any $\rP\in \End(G_0)$, there exists a unique $\eta\in \End(G_0)$ so that $\rP=\rP_{\eta}$ and therefore, $\rd \eta$ becomes the preimage of $\rd\rP$.
\end{proof}

\section{The \texorpdfstring{$t$}{t}-module \texorpdfstring{$G_n$}{Gn}}\label{S:tmoduleGn}
Let $G_0=(\GG_{a/\oK}, \phi)$ be the Drinfeld $\bA$-module of rank $r\geq 2$ defined as in \eqref{E:DrinfeldDef}. Our goal  is to make a similar analysis as in \S\ref{S:tmoduleEn} for $t$-modules constructed by the tensor product of $G_0$ with Carlitz tensor powers (see \ref{E:tensisom}). The main object of this section can be defined as follows.

\begin{definition}For any positive integer $n$, we define the $t$-module $G_n:=(\mathbb{G}_{a/K}^{rn+1},\phi_n)$, where $\phi_n:\bA\to \Mat_{rn+1}(K)[\tau]$ is given by $\phi_n(t):=\theta \Id_{rn+1}+N+E\tau$ so that $N\in \Mat_{rn+1}(\mathbb{F}_q)$ and $E\in \Mat_{rn+1}(K) $ are  defined as

\begin{equation}\label{E:matrices}
N:=\begin{bmatrix}
0&\cdots &0&\bovermat{$rn+1-r$}{1&0& \cdots & 0}  \\
& \ddots& & \ddots& \ddots & &\vdots \\
& &\ddots& &\ddots & \ddots&0\\
& &  & 0&\cdots&0 & 1\\
& &  & & 0&\cdots& 0\\
& &  & & &\ddots& \vdots\\
& &  & & & & 0\\
\end{bmatrix}\begin{aligned}
&\left.\begin{matrix}
\\
\\
\\
\\
\end{matrix} \right\} 
rn+1-r\\ 
&\left.\begin{matrix}
\\
\\
\\
\end{matrix}\right\}
r\\
\end{aligned}
\end{equation}
and 
\begin{equation}\label{E:matricesA}
E:=\begin{bmatrix}
0&\cdots&\cdots & \cdots &\cdots & \cdots & 0\\
\vdots& & & & & &\vdots\\
0& & & & & & 0\\
1&0 &\cdots &\cdots &\cdots &\cdots &0\\
0&\ddots& \ddots&  & & &\vdots\\
\vdots& &1& \ddots & & & \vdots\\
a_1&\cdots&\cdots &a_r&0&\cdots& 0
\end{bmatrix}\begin{aligned}
&\left.\begin{matrix}
\\
\\
\\
\end{matrix} \right\} 
rn+1-r\\ 
&\left.\begin{matrix}
\\
\\
\\
\\
\end{matrix}\right\}
r\\
\end{aligned}.
\end{equation}
\end{definition}

Set $\Phi_n^{\tens}:= \Phi_\phi (t-\theta)^n$. For $n \geq 1$, we consider the tensor product $\smash{\sH_{\phi}\otimes_{\oK[t]} \sH_{C^{\otimes n}}}$ which is free over $\oK[t]$ of rank $r$ with basis $\{n_1\otimes \widetilde{n}, \dots, n_r\otimes \widetilde{n}\}$. Moreover, since $\sigma \widetilde{n} = (t-\theta)^n \widetilde{n}$, by \eqref{E:sigmaactDM} we have 
\begin{equation}\label{E:DMtensC}
	\sigma \begin{bmatrix} n_1\otimes \widetilde{n}\\
	n_2\otimes \widetilde{n}\\\vdots\\n_{r-1}\otimes \widetilde{n}\\n_r\otimes \widetilde{n}\end{bmatrix}=\Phi_\phi(t-\theta)^n\begin{bmatrix} n_1\otimes \widetilde{n}\\
	n_2\otimes \widetilde{n}\\\vdots\\n_{r-1}\otimes \widetilde{n}\\n_r\otimes \widetilde{n}\end{bmatrix}
 = \Phi_n^{\tens}\begin{bmatrix} n_1\otimes \widetilde{n}\\
	n_2\otimes \widetilde{n}\\\vdots\\n_{r-1}\otimes \widetilde{n}\\n_r\otimes \widetilde{n}\end{bmatrix}.
	\end{equation}

We consider the $\oK[t,\sigma]$-module $\sH_{G_n}=\Mat_{1\times rn+1}(\oK[\sigma])$ whose $\oK[t]$-module structure, by \S\ref{S:AFdual}, is given by
\begin{equation}\label{E:action}
ct \cdot h :=c h\phi_n(t)^{*}, \ \ c\in \oK, \ \ h\in \sH_{G_n}.
\end{equation}
Note that the vectors $\{s_{1,rn+1},\dots,s_{rn+1,rn+1}\}$ forms a $\oK[\sigma]$-basis for $\sH_{G_n}$. For $1\leq i \leq r$, we let $\ell_i^{\tens}:= s_{r(n-1)+i+1,rn+1}$. Observe that the set $\{\ell_1^{\tens}, \dots, \ell_r^{\tens}\}$ forms a $\oK[t]$-basis of $\sH_{n}^{\tens}$. Define the matrix $B^{\tens}\in \GL_{r}(\oK)$ so that its inverse is given by
\begin{equation}\label{E:BchangeofB}
(\rB^{\tens})^{-1} = \begin{bmatrix}
0 & a_2^{(-1)}&a_3^{(-2)} &\dots & a_{r-1}^{(-r+2)} & a_r^{(-r+1)}\\
0 & a_3^{(-1)}&a_4^{(-2)} & \dots& a_r^{(-r+2)}&\\
\vdots&\vdots&&\reflectbox{$\ddots$}&&\\
\vdots&\vdots&\reflectbox{$\ddots$}&&&\\
0 & a_r^{(-1)} &&&\\
1 &  & &&&
\end{bmatrix}.
\end{equation}
Set $\bsell^{\tens} :=[\ell_1^{\tens}, \dots, \ell_r^{\tens}]^{\tr}$ and define $\bsh_{G_n} = \rB^{\tens} \bsell^{\tens}=[h_1^{\tens},\dots,h_r^{\tens}]$. If $\rA^{\tens}\in\Mat_r(\oK[t])$ represents the $\sigma$-action on $\sH_{G_n}$, that is if $\sigma\bsell^{\tens} = \rA^{\tens} \bsell^{\tens}$, then 
\begin{equation*}\label{E:Ataction}
\rA^{\tens} = \begin{bmatrix}
0&\dots&\dots&0& \frac{-a_1^{(-1)}}{a_r^{(-1)}}(t-\theta)^n & (t-\theta)^{n+1}\\
(t-\theta)^n &\ddots& &\vdots& \frac{-a_2^{(-1)}}{a_r^{(-1)}}(t-\theta)^n &0 \\
&(t-\theta)^n &\ddots&\vdots&\frac{-a_3^{(-1)}}{a_r^{(-1)}}(t-\theta)^n&\vdots\\
&&\ddots&0&\vdots&\vdots\\
&&&(t-\theta)^n&\frac{-a_{r-1}^{(-1)}}{a_r^{(-1)}}(t-\theta)^n&\vdots\\
&&&&\frac{(t-\theta)^n}{a_r^{(-1)}}&0
\end{bmatrix}.
\end{equation*}
Observe that $\Phi_n^{\tens} =(\rB^{\tens})^{(-1)}\rA^{\tens} (\rB^{\tens})^{-1}$ and $\sigma \bsh_{G_n} =\Phi^{\tens}_n\bsh_{G_n} $. Since the entries $h_1^{\tens},\dots,h_r^{\tens}$ of $\bsh_{G_n} $ form an $\oK[t]$-basis for $\sH_{G_n}$, one can see by \eqref{E:DMtensC} that 
\begin{equation}\label{E:tensisom}
\sH_{G_n}\cong \sH_{\phi}\otimes_{\oK[t]} \sH_{C^{\otimes n}}
\end{equation}
and hence, we identify $\sH_{\phi}\otimes_{\oK[t]} \sH_{C^{\otimes n}}$ with $\sH_{G_n}$. By using the $\oK[t]$-module action on $\sH_{n}^{\tens}$ given in \eqref{E:action}, we have
\begin{equation}\label{basistnsigma}
\begin{split}
&(t-\theta)^{n-k} \ell_{i}^{\tens}=s_{r(k-1)+i+1,rn+1} \text{ for } 1\leq k \leq n,\text{ and } 1\leq i \leq r,\\
&(t-\theta)^n\ell_r^{\tens}=s_{1,rn+1}. 
\end{split}
\end{equation}
\begin{remark}\label{R:aspC} 
Let $n\geq 1$. By direct computation, one can show that the top cofficient of $\phi_n(t^{rn+1})$ is an invertible lower triangular matrix and therefore, the $t$-module $G_n$ is \emph{almost strictly pure} in the sense of \cite{NPapanikolas21}.
  \end{remark}

\subsection{The Galois group of \texorpdfstring{$G_n$}{Gn}}\label{S:GGofGn}
Let $n$ be a positive integer and set $\sH_n^{\tens}:= \sH_{G_n}$.  For any $n\in \ZZ_{\geq 0}$, define the matrix 
	\begin{equation*}
	\Psi_{n}^{\tens}:=\Psi_{\phi}\Omega^{n}\in \GL_{r}(\TT).
	\end{equation*}
    The $\bA$-finite dual $t$-motive $\sH_n^{\tens}$ has a rigid analytic trivialization given by $\Psi_n^{\tens}$ and hence, the pre-$t$-motive $H_{n}^{\tens} := \oK(t)\otimes_{\oK[t]}\sH_{n}^{\tens} $ associated to $\sH_{n}^{\tens}$ forms a $t$-motive. Recall the dual $t$-motive $\sH_{\phi}$ and the field $\rL_{\phi}$ defined in \S\ref{S:GGofEn}. We set $\rL^{\tens}_{n}:=\End_{\sT}(H^{\tens}_n)$ and state the following proposition. 

\begin{proposition}\label{P:equalend} We have $\End_{\oK[t, \sigma]}(\sH_{\phi}) = \End_{\oK[t, \sigma]}(\sH^{\tens}_{n})$ and $\rL_{\phi}= \rL^{\tens}_n$. In particular, $\rL^{\tens}_n$ forms a field.
\end{proposition}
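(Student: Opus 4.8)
The plan is to mimic the proof of Proposition~\ref{P:equalend2}, exploiting that the matrix $\Phi^{\tens}_n$ representing the $\sigma$-action on the $\oK[t]$-basis $\bsh_{G_n}$ of $\sH^{\tens}_n$ differs from the matrix $\Phi_\phi$ for $\sH_\phi$ only by the central scalar $(t-\theta)^n$. In fact this case is cleaner than that of $\cE_n$: since $\Phi^{\tens}_n=\Phi_\phi(t-\theta)^n$ involves no cofactor of $\Phi_\phi$, no transpose bookkeeping is required.

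First I would take $g\in\End_{\oK[t,\sigma]}(\sH^{\tens}_n)$ and let $\rG\in\Mat_r(\oK[t])$ be the matrix with $[g(h_1^{\tens}),\dots,g(h_r^{\tens})]^{\tr}=\rG\bsh_{G_n}$; it has entries in $\oK[t]$, not merely $\oK(t)$, because $\sH^{\tens}_n$ is free over $\oK[t]$ with basis $\bsh_{G_n}$ and $g$ is $\oK[t]$-linear. Since $\sigma g=g\sigma$ and $\sigma\bsh_{G_n}=\Phi^{\tens}_n\bsh_{G_n}$, the same computation as in the proof of Proposition~\ref{P:equalend2} gives $\rG^{(-1)}\Phi^{\tens}_n=\Phi^{\tens}_n\rG$. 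As $\Phi^{\tens}_n=\Phi_\phi(t-\theta)^n$ and $(t-\theta)^n$ is a nonzero element of $\oK[t]$ central in $\Mat_r(\oK(t))$, this simplifies to $\rG^{(-1)}\Phi_\phi=\Phi_\phi\rG$, so that $\rG$ defines an element of $\End_{\oK[t,\sigma]}(\sH_\phi)$ via $[g'(n_1),\dots,g'(n_r)]^{\tr}=\rG\bsn$. The assignment is reversible: multiplying $\rG^{(-1)}\Phi_\phi=\Phi_\phi\rG$ by $(t-\theta)^n$ recovers the relation for $\Phi^{\tens}_n$, so every endomorphism of $\sH_\phi$ produces one of $\sH^{\tens}_n$ with the same matrix $\rG$. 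Because this matrix identification is compatible with composition in exactly the same way for $\sH_\phi$ and for $\sH^{\tens}_n$ (both endomorphism rings being identified with $\{\rG\in\Mat_r(\oK[t])\mid \rG^{(-1)}\Phi_\phi=\Phi_\phi\rG\}$ equipped with the induced multiplication), it yields a ring isomorphism, i.e. $\End_{\oK[t,\sigma]}(\sH_\phi)=\End_{\oK[t,\sigma]}(\sH^{\tens}_n)$. Tensoring with $\FF_q(t)$ over $\bA$ and invoking \eqref{E:homsdualpre} (as recorded in Remark~\ref{R:isomorphismsofvss}(i)) then gives $\rL_\phi=\End_{\sT}(H_\phi)\cong\End_{\sT}(H^{\tens}_n)=\rL^{\tens}_n$. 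Finally, by Remark~\ref{R:isomorphismsofvss}(ii) $\rL_\phi$ is isomorphic to $K_\phi$, the fraction field of $\End(G_0)$, hence a field; therefore $\rL^{\tens}_n$ is a field as well.

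The argument is essentially routine; the only points needing care are the twisted-operator bookkeeping in deriving $\rG^{(-1)}\Phi^{\tens}_n=\Phi^{\tens}_n\rG$, the verification that the scalar factor $(t-\theta)^n$ genuinely cancels on both sides (which it does, as it lies in $\oK[t]$ and so commutes with every matrix over $\oK(t)$), and the remark that the representing matrix $\rG$ automatically has polynomial entries, so that it represents a morphism of $\bA$-finite dual $t$-motives in both directions.
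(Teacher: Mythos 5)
Your proposal is correct and follows essentially the same route as the paper: express an endomorphism by a matrix $\rG$ with respect to the $\oK[t]$-basis $\bsh_{G_n}$, use $\sigma$-equivariance to get $\rG^{(-1)}\Phi^{\tens}_n=\Phi^{\tens}_n\rG$, cancel the central factor $(t-\theta)^n$ to identify this with the condition $\rG^{(-1)}\Phi_\phi=\Phi_\phi\rG$ for $\sH_\phi$, and deduce the field property from Remark~\ref{R:isomorphismsofvss}. The only cosmetic differences are the direction in which you run the identification and that you obtain $\rL_\phi=\rL^{\tens}_n$ by tensoring with $\FF_q(t)$ via \eqref{E:homsdualpre} rather than repeating the matrix argument over $\oK(t)$, which is equally valid.
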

\begin{proof}
	Let $f\in \End_{\oK[t, \sigma]}(H_{\phi})$ and $\rF \in \Mat_{r}(\oK[t])$ be such that $[f(n_1),\dots,f(n_r)]^{{\tr}}= \rF\bsn$. Since $\sigma f = f \sigma$, we obtain $\rF^{(-1)} \Phi_\phi = \Phi_\phi \rF$ from which we observe that $\rF^{(-1)} \Phi^{\tens}_{n} = \Phi^{\tens}_{n} \rF$. Thus, by a slight abuse of notation, $f$ defines an element in $\End_{\oK[t, \sigma]}(\sH^{\tens}_{n})$ such that $[f(h_1^{\tens}),\dots,f(h_r^{\tens})]^{{\tr}}= \rF\bsh_{G_n}$. The other inclusion is obtained by retracing the steps and hence, we obtain the first equality. The second equality can be achieved by using the same argument. Finally the last assertion follows from using the second equality and noting that $\rL_{\phi}$ is a field by Remark \ref{R:isomorphismsofvss}.
\end{proof}
Recall, from the proof of Proposition \ref{P:simpletensor2}, the $t$-motive $C_n=\oK(t)\otimes_{\oK[t]}\sC_n$ where $\sC_n$ is the $\bA$-finite dual $t$-motive corresponding to the $t$-module $\mathbf{C}^{\otimes n}$. Since $H_n^{\tens}\cong H_{\phi}\otimes C_n$, the next proposition follows from the same idea of the proof of Proposition \ref{P:simpletensor2}.

\begin{proposition}\label{P:simpletensor}
	For any $n\in \ZZ_{\geq 1}$, the $t$-motive $H_n^{\tens}$ is simple.
\end{proposition}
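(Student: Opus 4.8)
The plan is to follow the strategy of the proof of Proposition~\ref{P:simpletensor2}, but in a simpler form, since here no dualization is needed. First I would recall that the $t$-motive $H_{\phi}$ attached to the Drinfeld module $G_0$ is simple as a left $\oK(t)[\sigma,\sigma^{-1}]$-module, by \cite[Cor.~3.3.3]{CP12}. The key structural input is the identification \eqref{E:tensisom}, which gives $H_n^{\tens}\cong H_{\phi}\otimes C_n$ as left $\oK(t)[\sigma,\sigma^{-1}]$-modules, together with the fact that tensoring by a power of the Carlitz motive is invertible.

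Concretely, by \cite[\S3.2.11]{P08} one has $C_n\otimes C_n^{\vee}\cong \textbf{1}_{\sP}$, so the functor $\mathfrak{G}\colon\sP\to\sP$ given by $P\mapsto P\otimes C_n^{\vee}$ is an equivalence of categories, with quasi-inverse $P\mapsto P\otimes C_n$; in particular $\mathfrak{G}$ is exact and fully faithful, hence preserves and reflects simple objects. Applying $\mathfrak{G}$ to \eqref{E:tensisom} gives $\mathfrak{G}(H_n^{\tens})\cong H_{\phi}\otimes C_n\otimes C_n^{\vee}\cong H_{\phi}$. Now, exactly as in the proof of Proposition~\ref{P:simpletensor2}, for any left $\oK(t)[\sigma,\sigma^{-1}]$-submodule $J\subseteq H_n^{\tens}$ with inclusion $\iota_J$, the image of $\mathfrak{G}(\iota_J)\colon\mathfrak{G}(J)\to H_{\phi}$ is a submodule of the simple module $H_{\phi}$, hence is $0$ or all of $H_{\phi}$; full faithfulness of $\mathfrak{G}$ then forces $\iota_J$ to be either the zero map (so $J=0$) or an isomorphism (so $J=H_n^{\tens}$). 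Hence $H_n^{\tens}$ is simple.

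I do not anticipate a genuine obstacle here. The only points deserving a line of justification are that the isomorphism \eqref{E:tensisom} respects the $\sigma$-action --- which is already built into its derivation via \eqref{E:DMtensC} --- and that $\mathfrak{G}$ is an equivalence, which is the standard Carlitz-motive invertibility already recorded in the proof of Proposition~\ref{P:simpletensor2}. Compared to that proof, the argument is in fact shorter, since one may work directly with $H_{\phi}$ rather than passing to the dual $H_{\phi}^{\vee}$.
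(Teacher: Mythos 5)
Your proposal is correct and follows essentially the same route as the paper, which simply notes that since $H_n^{\tens}\cong H_{\phi}\otimes C_n$, the statement follows by the same argument as Proposition~\ref{P:simpletensor2} (tensoring with $C_n^{\vee}$ is an equivalence, and simplicity of $H_{\phi}$ from \cite[Cor.~3.3.3]{CP12} transfers). Your observation that no passage to the dual $H_{\phi}^{\vee}$ is needed here is exactly the simplification the paper has in mind.
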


	Let $f\in \rL_n^{\tens}$ and $\rF\in \Mat_{r}(\oK(t))$ be such that $[f(h_1^{\tens}),\dots,f(h_r^{\tens})]^{{\tr}}= \rF\bsh_{G_n}$. Note that $\sigma$ fixes 
 \[
 \mathcal{F}_f:=(\Psi^{\tens}_n)^{-1}\rF\Psi^{\tens}_n,
 \]
 and thus, by \cite[Lem. 3.3.2]{P08}, we have $\mathcal{F}_f \in \Mat_{r}(\FF_q(t))$. Moreover, for any $n\in \ZZ_{\geq 1}$, by \cite[Prop. 3.3.8]{P08} we obtain $\End((H_n^{\tens})^{\text{Betti}})=\Mat_{r}(\FF_q(t))$ where we recall the definition of the $\mathbb{F}_q(t)$-vector space $(H_n^{\tens})^{\text{Betti}}$ from \S\ref{SS:t-motives}. Hence, we have the following injective map
$
\rL_n^{\tens} \to \End((H_n^{\tens})^{\text{Betti}}),
$ defined by 
\[
f \mapsto \mathcal{F}_f.
\]
Let $\rR$ be any $\FF_q(t)$-algebra. Since the representation $\mu: \Gamma_{ H_n^{\tens}} \rightarrow \GL((H_n^{\tens})^{\text{Betti}})$ is functorial in $H_n^{\tens}$ (\cite[\S3.5.2]{P08}),  for any $\gamma \in \Gamma_{H_n^{\tens}}(\rR)$ we have the commutative diagram (cf. \cite[$\S$3.2]{CP11}):
\begin{equation} \label{E:Com}
\begin{tikzcd}
\rR \otimes_{\FF_q(t)} (H_n^{\tens})^{\text{Betti}} \arrow{r}{\mu(\gamma)} \arrow{d}{1\otimes \mathcal{F}_f} & \rR \otimes_{\FF_q(t)} (H_n^{\tens})^{\text{Betti}} \arrow{d}{1 \otimes \mathcal{F}_f} \\
\rR \otimes_{\FF_q(t)}(H_n^{\tens})^{\text{Betti}} \arrow{r}{\mu(\gamma)} & \rR \otimes_{\FF_q(t)} (H_n^{\tens})^{\text{Betti}}
\end{tikzcd}.
\end{equation}
Recall the centralizer $\Cent_{\GL_r/\FF_q(t)}$ defined in \S\ref{S:GGofEn}. We have $\Gamma_{H_n^{\tens}} \subseteq \Cent_{\GL_r/\FF_q(t)}(\rL_n^{\tens})$ which follows from \eqref{E:Com}. Thus, the following theorem can be easily obtained by the same idea in the proof of Theorem \ref{T:equalitygamma2}.

\begin{theorem}\label{T:equalitygamma}
	We have $\Gamma_{H_n^{\tens}} = \Gamma_{H_{\phi}}=\Cent_{\GL_r/\FF_q(t)}(\rL_n^{\tens})$. In particular, $\dim \Gamma_{H_n^{\tens}}=r^2/\bs$, where $[K_{\phi}:K]=\bs=[\rL_{\phi}:\mathbb{F}_q(t)]$.
\end{theorem}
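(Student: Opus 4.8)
The plan is to mirror the proof of Theorem~\ref{T:equalitygamma2}, with the roles of $\sH_n=\sH_{\mathcal{E}_n}$ and $\Psi_n$ played by $\sH_n^{\tens}=\sH_{G_n}$ and $\Psi_n^{\tens}=\Psi_{\phi}\Omega^n$. First, Proposition~\ref{P:equalend} gives $\rL_n^{\tens}=\rL_{\phi}=\End_{\sT}(H_{\phi})$, and by \cite[Thm.~3.5.4]{CP12} we have $\Gamma_{H_{\phi}}=\Cent_{\GL_r/\FF_q(t)}(\rL_{\phi})$; combining this with the inclusion $\Gamma_{H_n^{\tens}}\subseteq \Cent_{\GL_r/\FF_q(t)}(\rL_n^{\tens})$ furnished by the commutative diagram~\eqref{E:Com} already yields
\[
\Gamma_{H_n^{\tens}}\subseteq \Cent_{\GL_r/\FF_q(t)}(\rL_n^{\tens})=\Gamma_{H_{\phi}}.
\]
It then suffices to check that $\dim\Gamma_{H_n^{\tens}}=\dim\Gamma_{H_{\phi}}=r^2/\bs$, since $\Cent_{\GL_r/\FF_q(t)}(\rL_n^{\tens})=\Gamma_{H_{\phi}}$ is a connected (hence irreducible) algebraic group and a closed subgroup of it of the same dimension must be the whole group.

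To compute $\dim\Gamma_{H_n^{\tens}}$ I would apply Theorem~\ref{T:TannakianMain} to the $t$-motive $H_n^{\tens}$ with rigid analytic trivialization $\Psi_n^{\tens}=\Psi_{\phi}\Omega^n=(\Upsilon V)^{-1}\Omega^n$, where $\Upsilon$ and $V\in\GL_r(\oK)$ are as in~\eqref{E:UpsilonNV}. Since $\Omega(\theta)=-\widetilde{\pi}^{-1}$, specialization at $t=\theta$ gives $\Psi_n^{\tens}(\theta)=(-1)^n\widetilde{\pi}^{-n}V^{-1}\Upsilon(\theta)^{-1}$. Using~\eqref{E:quasiperiod},~\eqref{E:periodLeg}, and \cite[(3.4.3)]{CP12} one has $\det\Upsilon(\theta)=(-1)^{r-1}\det\mathcal{P}=(-1)^{r-1}\widetilde{\pi}/\xi$ for some $\xi\in\oK^{\times}$, together with $\oK(\Upsilon(\theta))=\oK(\mathcal{P})$ and $\widetilde{\pi}\in\oK(\mathcal{P})$ (the last from the Legendre relation~\eqref{E:periodLeg}). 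Hence $\det\Psi_n^{\tens}(\theta)\in\oK^{\times}\cdot\widetilde{\pi}^{-nr-1}$, so that $\widetilde{\pi}$ is algebraic over $\oK(\Psi_n^{\tens}(\theta))$, while $\oK(\Psi_n^{\tens}(\theta))\subseteq\oK(\mathcal{P})$ and
\[
\oK(\mathcal{P})=\oK(\Upsilon(\theta))=\oK\bigl(\Upsilon(\theta)^{-1}\bigr)=\oK\bigl(\Psi_n^{\tens}(\theta),\widetilde{\pi}\bigr).
\]
Thus $\oK(\mathcal{P})$ is a finite algebraic extension of $\oK(\Psi_n^{\tens}(\theta))$, and by \cite[Thm.~3.5.4]{CP12} and Theorem~\ref{T:TannakianMain},
\[
\dim\Gamma_{H_n^{\tens}}=\trdeg_{\oK}\oK(\Psi_n^{\tens}(\theta))=\trdeg_{\oK}\oK(\mathcal{P})=r^2/\bs .
\]

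Combining the two steps gives $\Gamma_{H_n^{\tens}}=\Gamma_{H_{\phi}}=\Cent_{\GL_r/\FF_q(t)}(\rL_n^{\tens})$ together with $\dim\Gamma_{H_n^{\tens}}=r^2/\bs$, as claimed. I do not expect a genuine obstacle here: the argument is a direct transcription of the $\mathcal{E}_n$ case, and the only point needing (routine) care is that $\Psi_n^{\tens}$ differs from the period matrix $\mathcal{P}$ of $G_0$ only through the constant invertible matrix $V$, inversion, and a power of the Carlitz period $\widetilde{\pi}$, none of which changes the transcendence degree over $\oK$ of the field generated by the relevant specialization at $t=\theta$.
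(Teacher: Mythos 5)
Your proposal is correct and follows essentially the paper's own route: the paper proves this theorem by declaring it "the same idea as Theorem~\ref{T:equalitygamma2}", namely the inclusion $\Gamma_{H_n^{\tens}}\subseteq\Cent_{\GL_r/\FF_q(t)}(\rL_n^{\tens})=\Gamma_{H_\phi}$ from Proposition~\ref{P:equalend}, \eqref{E:Com} and \cite[Thm.~3.5.4]{CP12}, then a dimension count via Theorem~\ref{T:TannakianMain} using that $\Psi_n^{\tens}(\theta)$ differs from the period/quasi-period data of $G_0$ only by $V$, inversion, and powers of $\widetilde{\pi}$ (all algebraic over the relevant fields), and finally connectedness of the centralizer to force equality. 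Your transcription of that argument to the $G_n$ setting, including the determinant computation showing $\widetilde{\pi}$ is algebraic over $\oK(\Psi_n^{\tens}(\theta))$, is exactly what the authors intend.
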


\subsection{Endomorphisms of \texorpdfstring{$G_n$}{Gn}}\label{S:EndGn}
Let $\eta\in \End(G_0)$, $\epsilon_{\eta}\in \End_{\oK[t, \sigma]}(\sH_{\phi})$ and $\rF=(\rF_{ij}) \in \Mat_{r}(\oK[t])$ be defined as in \S\ref{S:EndEn} satisfying \eqref{E:actiondrinfelfnor}. By Proposition \ref{P:fullyfaithful} and Proposition~\ref{P:equalend}, there exist a unique element $\rP_{\eta}^{\tens}\in \End(G_n)$ and a map  $\epsilon_{\rP_\eta^{\tens}}\in \End(\sH_n^{\tens})$ such that
$[\epsilon_{\rP_{\eta}}(h_1^{\tens}),\dots,\epsilon_{\rP_{\eta}}(h_r^{\tens})]^{{\tr}}= \rF\bsh_{G_n}$ and 
\begin{equation}\label{E:acthom2}
\begin{bmatrix}
\epsilon_{\rP^{\tens}_{\eta}}(s_{1,rn+1})\\ \vdots\\\vdots\\ \epsilon_{\rP^{\tens}_{\eta}}(s_{rn+1,rn+1})\end{bmatrix} =\begin{bmatrix}s_{1,rn+1}(\rP^{\tens}_{\eta})^{*}\\ \vdots\\\vdots\\s_{rn+1,rn+1}(\rP^{\tens}_{\eta})^{*}
\end{bmatrix}.
\end{equation}
For the matrix $\rB^{\tens}$ defined in \eqref{E:BchangeofB}, let $((\rB^{\tens})^{-1}\rF)_r$ be the $r$-th row of $(\rB^{\tens})^{-1}\rF$. By using \eqref{basistnsigma} and noting that $\bsh_{G_n} = \rB^{\tens} \bsell^{\tens}$, we obtain
\begin{equation}\label{E:actionhom}
\begin{bmatrix}
\epsilon_{\rP^{\tens}_{\eta}}(s_{1,rn+1}) \\ \epsilon_{\rP^{\tens}_{\eta}}(s_{2,rn+1}) \\ \vdots \\ \vdots\\\epsilon_{\rP^{\tens}_{\eta}}(s_{rn,rn+1})\\ \epsilon_{\rP^{\tens}_{\eta}}(s_{rn+1,rn+1})
\end{bmatrix} = 
\begin{bmatrix}
0&\dots&\dots&\dots&0& ((\rB^{\tens})^{-1}\rF)_{r}(\rB^{\tens})(t-\theta)^n\\
\vdots& &&&\vdots& (\rB^{\tens})^{-1}\rF (\rB^{\tens})(t-\theta)^{n-1}\\
\vdots&& &&\vdots& \vdots\\
\vdots&&& &\vdots& (\rB^{\tens})^{-1}\rF (\rB^{\tens})(t-\theta)\\
0&\dots&\dots&\dots&0& (\rB^{\tens})^{-1}\rF (\rB^{\tens})\\
\end{bmatrix}\begin{bmatrix}
s_{1,rn+1} \\ s_{2,rn+1} \\ \vdots \\ \vdots\\s_{rn,rn+1}\\ s_{rn+1,rn+1}
\end{bmatrix}.
\end{equation}
On the other hand, observe, by using the $\oK[t]$-module action on $\sH^{\tens}_{n}$ given in \eqref{E:action}, that for each $i\geq 0$ and $1\leq j\leq rn+1$ the $\ell$-th entry of $\rd(t^i\cdot s_{j,rn+1})\in \sH^{\tens}_n$ is equal to 
\begin{equation}\label{E:actionmor}
\begin{cases} 
\pd_t^k(t^i)|_{t=\theta}, &  \text{if}\, \, \ell =j-rk  \\
0, &\text{if} \, \,  \ell \neq j-rk.
\end{cases}
\end{equation}
Let us set $\bQ:=(\rB^{\tens})^{-1}\rF \rB^{\tens}=(\rB''_{ij})\in \Mat_r(\oK[t])$ and let $\rS:= [\rB''_{r1}, \dots, \rB''_{rr}]^{\tr}$ be its last column. Using \eqref{E:actionmor}, one can compare \eqref{E:acthom2} and \eqref{E:actionhom} to calculate that 
\begin{equation}\label{E:dEnd1}
\rd\rP_{\eta}^{\tens} = 
\left.\begin{bmatrix}
\rB''_{rr} & \pd_t^{1}(\rS)^{\tr} & \dots & \dots & \dots & \pd_{t}^{n-1}(\rS)^{\tr} & \pd_t^n(\rS)^{\tr}\\
& \bQ^{\tr} & \pd_t^1(\bQ)^{\tr} & \dots & \dots & \pd_t^{n-2}(\bQ)^{\tr}& \pd_t^{n-1}(\bQ)^{\tr}\\
&&\ddots&\ddots&\ddots&\vdots&\pd_t^{n-2}(\bQ)^{\tr}\\
&&&\ddots&\ddots&\vdots&\vdots\\
&&&&\ddots&\vdots&\vdots\\
&&&&&\bQ^{\tr}&\pd_t^1(\bQ)^{\tr}\\
&&&&&&\bQ^{\tr}
\end{bmatrix}\right|_{t=\theta} \in \Mat_{rn+1}(\oK).
\end{equation}
By direct calculation and using \cite[Prop.~4.1.1(b), (c)]{CP12}, Remark \ref{R:isomorphismsofvss}(ii) and  \eqref{E:actiondrinfelfnor}, the last row of $\bQ^{\tr}$ is equal to 
\[
\rd \eta =(\sum_{j=2}^r a_j^{(-j+1)} F_{j1}(\theta),\sum_{j=2}^{r-1} a_{j+1}^{(-j+1)} F_{j1}(\theta), \dots, \sum_{j=2}^3 a_{j+(r-3)}^{(-j+1)} F_{j1}(\theta), a_r^{(-1)} F_{21}(\theta),  F_{11}(\theta)), 
\]
and therefore the last row $\Delta\in \Mat_{1\times rn+1}(\oK)$ of 
$\rd\rP^{\tens}_{\eta}$ is equal to 
\begin{equation}\label{E:W}
\begin{split}
\Delta &=[0, \dots, 0, \rd\eta]
\end{split}
\end{equation}

The following proposition is simply a consequence of the methods used in the proof of Proposition \ref{P:rings2}.
\begin{proposition}\label{P:rings}
	The map $\End(G_0) \to \End(G_n)$ given by 
	$\eta \mapsto \rP_{\eta}^{\tens}
	$ for $\eta \in \End(G_0)$ is an isomorphism of rings. The matrix $\rd \rP_\eta^{\tens}$ and its last row are of the form \eqref{E:dEnd1} and \eqref{E:W} respectively. Moreover, $\End(G_n)$ can be identified with $\{\rd\rP^{\tens} \ \ |  \rP^{\tens} \in \End(G_n)\}$ which is an integral  domain.
\end{proposition}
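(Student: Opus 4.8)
The plan is to mirror, with simplifications, the proof of Proposition~\ref{P:rings2}; since $G_n$ is only defined for $n\in\ZZ_{\geq 1}$, the separate treatment of an $n=0$ case is unnecessary here. First I would verify that $\eta\mapsto\rP_\eta^{\tens}$ is a well-defined ring homomorphism. For $\eta\in\End(G_0)$, Proposition~\ref{P:fullyfaithful} attaches the endomorphism $\epsilon_\eta\in\End_{\oK[t,\sigma]}(\sH_{\phi})$; under the identification $\sH_{G_n}\cong\sH_{\phi}\otimes_{\oK[t]}\sH_{C^{\otimes n}}$ of \eqref{E:tensisom} together with Proposition~\ref{P:equalend} (which gives $\End_{\oK[t,\sigma]}(\sH_{\phi})=\End_{\oK[t,\sigma]}(\sH_{n}^{\tens})$), this corresponds to an element of $\End_{\oK[t,\sigma]}(\sH_{n}^{\tens})$, which by Proposition~\ref{P:fullyfaithful} is $\epsilon_{\rP_\eta^{\tens}}$ for a unique $\rP_\eta^{\tens}\in\End(G_n)$, exactly as set up in \S\ref{S:EndGn}. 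Because $\epsilon_{(-)}$ and the functor $G\mapsto\sH_G$ are functorial, the assignment $\eta\mapsto\rP_\eta^{\tens}$ respects addition and composition. Injectivity is then clear ($\rP_\eta^{\tens}=0$ forces $\epsilon_\eta=0$, hence $\eta=0$ by faithfulness), and surjectivity follows because every $\rP^{\tens}\in\End(G_n)$ yields via Proposition~\ref{P:fullyfaithful} an element of $\End_{\oK[t,\sigma]}(\sH_{n}^{\tens})=\End_{\oK[t,\sigma]}(\sH_{\phi})$, which by fullness comes from a (unique) $\eta\in\End(G_0)$.

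Second, the shape of $\rd\rP_\eta^{\tens}$ is precisely the computation already carried out in \S\ref{S:EndGn}: comparing \eqref{E:acthom2} with \eqref{E:actionhom} via the formula \eqref{E:actionmor} for the entries of $\rd(t^i\cdot s_{j,rn+1})$ puts $\rd\rP_\eta^{\tens}$ in the block upper-triangular form \eqref{E:dEnd1}, and the explicit description of the last row of $\bQ^{\tr}$ obtained from \cite[Prop.~4.1.1(b),(c)]{CP12}, Remark~\ref{R:isomorphismsofvss}(ii) and \eqref{E:actiondrinfelfnor} shows that the last row of $\rd\rP_\eta^{\tens}$ is $[0,\dots,0,\rd\eta]$, which is \eqref{E:W}. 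No new input is required beyond what already precedes the statement.

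Third, to identify $\End(G_n)$ with $\{\rd\rP^{\tens}\mid\rP^{\tens}\in\End(G_n)\}$, note that $\rd\colon\Mat_{rn+1}(\oK)[\tau]\to\Mat_{rn+1}(\oK)$ is a ring homomorphism, so $\rP^{\tens}\mapsto\rd\rP^{\tens}$ is a surjective ring homomorphism onto that set, which is therefore a ring. For injectivity I would use uniqueness of $\Exp_{G_n}$: the identity $\rP^{\tens}\Exp_{G_n}=\Exp_{G_n}\,\rd\rP^{\tens}$ holds in $\Mat_{rn+1}(\oK)[[\tau]]$, and since the constant term of $\Exp_{G_n}$ is $\Id_{rn+1}$, comparison of lowest-degree $\tau$-coefficients gives $\rd\rP^{\tens}=0\Rightarrow\rP^{\tens}=0$. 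Hence $\End(G_n)\cong\{\rd\rP^{\tens}\}$ as rings. Chaining this with the first part and with Remark~\ref{R:isomorphismsofvss}(ii), which exhibits $\End(G_0)$, and therefore $\{\rd\eta\mid\eta\in\End(G_0)\}$, as a subring of the field $\CC_\infty$, produces ring isomorphisms $\{\rd\eta\}\cong\End(G_0)\cong\End(G_n)\cong\{\rd\rP^{\tens}\}$, and the image of an integral domain under a ring isomorphism is again an integral domain.

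The matrix bookkeeping behind \eqref{E:dEnd1} and \eqref{E:W} is routine but somewhat fiddly; the only genuinely conceptual point, and the one I would be most careful about, is the injectivity of $\rd$ on $\End(G_n)$, which rests on the intertwining relation $\rP^{\tens}\Exp_{G_n}=\Exp_{G_n}\,\rd\rP^{\tens}$ coming from uniqueness of the exponential and on the normalization $\alpha_0=\Id_{rn+1}$.
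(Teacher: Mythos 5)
Your proposal is correct and takes essentially the same route as the paper, which proves this proposition simply by invoking the methods of Proposition~\ref{P:rings2} together with the computations of \S\ref{S:EndGn} (equations \eqref{E:dEnd1} and \eqref{E:W}), exactly as you reconstruct. The one harmless deviation is that for the injectivity of $\rP^{\tens}\mapsto\rd\rP^{\tens}$ you use the exponential-function argument (the one the paper deploys for the $\mathcal{E}_0$ case of Proposition~\ref{P:rings2}), whereas for $n\geq 1$ the paper's argument reads $\rd\eta$ off the last row \eqref{E:W} and transports the integral-domain structure through the isomorphism $\rd\eta\mapsto\rd\rP_{\eta}^{\tens}$; both arguments are valid.
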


\section{The proof of the main result for \texorpdfstring{$\mathcal{E}_n$}{En}}\label{S:MainResultsEn}
In this section, our goal is to prove Theorem \ref{T:1}. 

\subsection{Construction of \texorpdfstring{$\bg_{\bsalpha}$}{ga} and \texorpdfstring{$\bh_{\bsalpha}$}{ha} for \texorpdfstring{$\mathcal{E}_n$}{En}}\label{SS:gahaextn}
Throughout this subsection, empty sums are considered to be zero. Let $\bsy = [y_1, \dots, y_{rn+r-1}]^{\tr}\in \Lie(\mathcal{E}_n)(\CC_{\infty})$ and $\bsalpha=[\alpha_1, \dots, \alpha_{rn+r-1}]^{\tr}\in \mathcal{E}_n(\oK)$ so that $\Exp_{\mathcal{E}_n}(\bsy)=\bsalpha$. Let $\cG_{\bsy}(t)=[g_1, \dots, g_{rn+r-1}]^{\tr}\in \TT^{rn+r-1}$ be the Anderson generating function of $\mathcal{E}_n$ at $\bsy$ given as in \eqref{E:AGF}.

Recall the matrices $E_1$ and $E_2\in \Mat_{r-1}(\oK)$ given in  \eqref{E:matrices20} and $E'\in \Mat_{rn+r-1}(\oK)$  given in \eqref{E:matrices2A}. By Lemma~\ref{L:AGFNP21}(ii) we have 
\begin{equation}\label{E:AGFQuasi2}
(t\Id_{rn+r-1}-\rd_{\varphi_{n}}(t))\cG_{\bsy}(t) + \bsalpha = C_1\cG_{\bsy}^{(1)}(t)+C_2\cG_{\bsy}^{(2)}(t)
\end{equation}
where $C_1:=E_1$ and $C_2:=E_2$ if $n=0$ and $C_1:=E'$ and $C_2:=0$ if $n\geq 1$.

We now introduce the vectors $\bg_{\bsy}$ and $\bh_{\bsalpha}$  with coefficients in $\TT$ to be able to construct a certain $t$-motive. For more details on their construction, one can consult \cite[Sec. 4.4]{NPapanikolas21}. 

Set 
\begin{equation}\label{E:gformathcal{E}}
\bg_{\bsy}:=-[\bG_{1,1},g_{r-1}^{(1)},\dots,g_1^{(1)}](V^{-1})^{\tr}\in \Mat_{1 \times r}(\TT)
\end{equation}
where $\bG_{1,1}:=(-1)^{r-1}g_1^{(2)}$ if $n=0$ and  $\bG_{1,1}:=g_r^{(1)}$ otherwise. Recall $V\in \GL_{r}(\oK)$ from \eqref{E:UpsilonNV} and note that $V^{-1}$ is of the form
\[
V^{-1}= \begin{bmatrix}
&&& \mathfrak{v}_{1,r}\\
&& \mathfrak{v}_{2,r-1}& \mathfrak{v}_{2,r}\\
&\reflectbox{$\ddots$}&&\vdots\\
\mathfrak{v}_{r,1} & \mathfrak{v}_{r, 2}&\cdots&\mathfrak{v}_{r,r}
\end{bmatrix}.
\]

Note that the last $r$ rows of $N'\in \Mat_{rn+r-1}(\mathbb{F}_q)$ given in \eqref{E:matrices2} have only zeros. Therefore, by using \eqref{E:AGFQuasi2}, we see that
\[
\bg_{\bsy}= \begin{bmatrix}
\bg_1(\bsy)\\
-\mathfrak{v}_{2,r-1}(-1)^{r-1}a_r^{-1}\left((t-\theta)g_{rn+1}+\alpha_{rn+1}\right)\\
-\sum_{i=1}^{2} \mathfrak{v}_{3,r-i}(-1)^{r-1}a_r^{-1}\left((t-\theta)g_{rn+i}+\alpha_{rn+i}\right)\\
\vdots \\
-\sum_{i=1}^{r-2} \mathfrak{v}_{r-1,r-i}(-1)^{r-1}a_r^{-1}\left((t-\theta)g_{rn+i}+\alpha_{rn+i}\right)\\
-\sum_{i=1}^{r-1} \mathfrak{v}_{r,r-i}(-1)^{r-1}a_r^{-1}\left((t-\theta)g_{rn+i}+\alpha_{rn+i}\right)
\end{bmatrix}^{\tr} \in \Mat_{1 \times r}(\TT)
\]
where $\bg_{1}(\bsy):=-a_r^{-1}g_{1}^{(1)}$ if $n=0$ and $\bg_{1}(\bsy):=-(-1)^{r-1}a_r^{-1}\left((t-\theta)g_{rn}- \alpha_{rn}\right)$ otherwise.

Consider the quasi-periodic function $F_{\mathcal{E}_0,\delta}$ associated to the $\phi_0$-biderivation $\delta$ uniquely determined by $\delta_t= (\tau, 0,\dots,0)$. By \cite[Prop. 4.3.5(a)]{NPapanikolas21} we have \[
F_{\mathcal{E}_0,\delta}(\bsy) = g_1^{(1)}|_{t=\theta}.
\]
By using Lemma \ref{L:AGFNP21}(i) (see also \cite[Prop. 4.2.7]{NPapanikolas21}), we obtain
\begin{equation}\label{E:gtheta1}
\bg_{\bsy}(\theta)= \begin{bmatrix}
\bg_{1}(\bsy)|_{t=\theta} \\
\mathfrak{v}_{2,r-1}(-1)^{r-1}a_r^{-1}\left(y_{rn+1}-\alpha_{rn+1}\right)\\
\sum_{i=1}^{2} \mathfrak{v}_{3,r-i}(-1)^{r-1}a_r^{-1}\left(y_{rn+i}-\alpha_{rn+i}\right)\\
\vdots\\
\sum_{i=1}^{r-2} \mathfrak{v}_{r-1,r-i}(-1)^{r-1}a_r^{-1}\left(y_{rn+i}-\alpha_{rn+i}\right)\\
\sum_{i=1}^{r-1} \mathfrak{v}_{r,r-i}(-1)^{r-1}a_r^{-1}\left(y_{rn+i}-\alpha_{rn+i}\right)
\end{bmatrix}^{\tr} \in \Mat_{1 \times r}(\mathbb{C}_{\infty})
\end{equation}
where $\bg_{1}(\bsy)|_{t=\theta}:=-a_r^{-1}F_{\mathcal{E}_0,\delta}(\bsy)$ if $n=0$ and $\bg_{1}(\bsy)|_{t=\theta}:=(-1)^{r-1}a_r^{-1}(y_{rn} - \alpha_{rn})$ otherwise.

We now set 
\[
\begin{split}
\bh_{\bsalpha}:= 
\begin{bmatrix}
\sum_{j=0}^{n-1} (-1)^{r-1}a_r^{-1} (t-\theta)^j\alpha_{r(n-j)} \\
\mathfrak{v}_{2,r-1}(-1)^{r-1}a_r^{-1}\left(\alpha_{rn+1}+\sum_{j=0}^{n-1} (t-\theta)^{j+1}\alpha_{r(n-j-1)+1}\right)\\
\sum_{i=1}^2\mathfrak{v}_{3,r-i}(-1)^{r-1}a_r^{-1}\left(\alpha_{rn+i}+\sum_{j=0}^{n-1} (t-\theta)^{j+1}\alpha_{r(n-j-1)+i}\right)\\
\sum_{i=1}^3 \mathfrak{v}_{4,r-i}(-1)^{r-1}a_r^{-1}\left(\alpha_{rn+i}+\sum_{j=0}^{n-1} (t-\theta)^{j+1}\alpha_{r(n-j-1)+i}\right)\\
\vdots\\
\sum_{i=1}^{r-2}\mathfrak{v}_{r-1,r-i}(-1)^{r-1}a_r^{-1}\left(\alpha_{rn+i}+\sum_{j=0}^{n-1} (t-\theta)^{j+1}\alpha_{r(n-j-1)+i}\right)\\
\sum_{i=1}^{r-1}\mathfrak{v}_{r,r-i}(-1)^{r-1}a_r^{-1}\left(\alpha_{rn+i}+\sum_{j=0}^{n-1} (t-\theta)^{j+1}\alpha_{r(n-j-1)+i}\right)\\
\end{bmatrix}^{\tr}\in \Mat_{1\times r}(\oK[t]).
\end{split}
\]
By using the definition of $\bg_{\bsy}$ and \eqref{E:AGFQuasi2}, we have  
\begin{equation}\label{E:RATg2}
\bg_{\bsy}^{(-1)}\Phi_n-\bg_{\bsy} = \bh_{\bsalpha}.
\end{equation}

Since $\sH_{n}$ is rigid analytically trivial, by \cite[Lem. 2.4.1, Thm. 2.5.32]{HartlJuschka16}, we obtain that $\Lambda_{\mathcal{E}_n}$ is a free $\bA$-module of rank $r$. Let $\{\bsomega_1, \dots, \bsomega_r\}$ be a fixed $\bA$-basis of $\Lambda_{\mathcal{E}_n}$. For each $1\leq k \leq r$, denote $\cG_{\bsomega_k}(t)=[g_{k,1},\dots,g_{k,rn+r-1}]^{\tr}$. Consider
\[
\mathfrak{B}_n:= \begin{bmatrix}
\bg_{1,1}(\bsomega_1)& g_{1,r-1}^{(1)} & \dots &g_{1,1}^{(1)}\\
\vdots &  \vdots & &  \vdots\\
\bg_{1,1}(\bsomega_r)& g_{r,r-1}^{(1)} &\dots & g_{r,1}^{(1)}
\end{bmatrix} (V^{-1})^{\tr}\in \Mat_{r}(\TT),
\] 
where for $1\leq j \leq r$, we set $\bg_{1,1}(\bsomega_j) := (-1)^{r-1}g_{j,1}^{(2)}$ if $n=0$ and $\bg_{1,1}(\bsomega_j) := g_{j,r}^{(1)}$ otherwise. It follows from \cite[Prop. 4.3.10]{NPapanikolas21} that $\mathfrak{B}_n\in \GL_r(\TT)$. Using \cite[Prop. 4.4.14]{NPapanikolas21} and setting $\Pi_n := \mathfrak{B}_n^{-1}$, one further sees that $\Pi_n^{(-1)} = \Phi_{n}\Pi_n$. Thus, $\Pi_n$ is a rigid analytic trivialization of $\sH_n$ and hence, by the discussion in \cite[Sec. 4.1.6]{P08}, there exists a  matrix $D=(D_{ij})\in \GL_r(\mathbb{F}_q(t))$ satisfying 
\[
\Pi_n D^{-1} = \Psi_{n}.
\]
Using a similar calculation as in \eqref{E:gtheta1} and setting  $\bsomega_i = [\omega_{i, 1}, \dots, \omega_{i,rn+r-1}]^{\tr}\in \Lie(\mathcal{E}_n)(\CC_{\infty})$ for $1 \leq i \leq r$, we see that the $i$-th row of $\Psi_n^{-1}(\theta) = D(\theta)\mathfrak{B}_n(\theta)$ is equal to
\begin{equation}\label{E:matrix22}
\begin{bmatrix}
\rR_{11} \\
(-1)^{r-1}a_r^{-1}(D_{i1}(\theta)\mathfrak{v}_{2,r-1}\omega_{1,rn+1}+\dots+D_{ir}(\theta)\mathfrak{v}_{2,r-1}\omega_{r,rn+1})  \\
(-1)^{r-1}a_r^{-1}(D_{i1}(\theta)\sum_{i=1}^{2} \mathfrak{v}_{3,r-i}\omega_{1,rn+i}+\dots+D_{ir}(\theta)\sum_{i=1}^{2} \mathfrak{v}_{3,r-i}\omega_{r,rn+i}) \\
\vdots \\
(-1)^{r-1}a_r^{-1}(D_{i1}(\theta)\sum_{i=1}^{r-1}\mathfrak{v}_{r,r-i}\omega_{1,rn+i}+\dots+D_{ir}(\theta)\sum_{i=1}^{r-1}\mathfrak{v}_{r,r-i}\omega_{r,rn+i})
\end{bmatrix}^{\tr},
\end{equation}
where 
\[
\rR_{11}:=\begin{cases} a_r^{-1}D_{i1}(\theta)F_{\mathcal{E}_0,\delta}(\bsomega_1)+\dots+a_r^{-1}D_{ir}(\theta)F_{\mathcal{E}_0,\delta}(\bsomega_r) \text{ if } n=0 \\  (-1)^{r-1}a_r^{-1}(D_{i1}(\theta)\omega_{1,rn}+\dots+D_{ir}(\theta)\omega_{r,rn}) \text{  otherwise}
\end{cases}.
\]

\par
Consider the $\CC_{\infty}[t,\sigma]$-module $\Mat_{1\times rn+r-1}(\CC_{\infty}[\sigma])$ whose $\CC_{\infty}[t]$-module action is given by 
\[
\mathfrak{c}t \cdot \mathfrak{u} := \mathfrak{c}\mathfrak{u}\varphi_n(t)^{*}, \ \ \mathfrak{c}\in \CC_{\infty}, \ \ \mathfrak{u}\in \Mat_{1\times rn+r-1}(\CC_{\infty}[\sigma]).
\]
Recalling the elements $\ell_{1,n},\dots,\ell_{r,n}$ given in \eqref{E:basisEn}, we define the map
\[
\iota: \Mat_{1\times r}(\CC_{\infty}[t]) \rightarrow  \Mat_{1\times rn+r-1}(\CC_{\infty}[\sigma])
\]
by setting for $[\mathfrak{g}_1, \dots, \mathfrak{g}_r] \in \Mat_{1\times r}(\oK[t])$,
\[
\iota([\mathfrak{g}_1, \dots, \mathfrak{g}_r]) := \mathfrak{g}_1 \cdot \ell_{1,n} + \dots+\mathfrak{g}_r \cdot \ell_{r,n}.
\]
We also define the $\CC_{\infty}$-linear map 
\[\delta_0:\Mat_{1\times rn+r-1}(\CC_{\infty}[\sigma])\to \Mat_{rn+r-1\times 1}(\CC_{\infty})
\]
given by
\[
\delta_0\bigg(\sum_{i\geq 0}C_i\sigma^i\bigg) := C_0^{\tr},\ \ C_i\in \Mat_{1\times rn+r-1}(\CC_{\infty}), \ \ C_i=0 \text{ for } i\gg0 .
\]
By using the ideas of the proof of \cite[Prop.~5.2]{G20} we have 
\begin{multline}\label{E:deltaiota2}
\delta_0\circ\iota([\mathfrak{g}_1, \dots, \mathfrak{g}_r]) = [\pd_t^n(\mathfrak{g}_2), \dots,\pd_t^n(\mathfrak{g}_r),\pd_t^{n-1}(\mathfrak{g}_1), \dots, \pd_t^{n-1}(\mathfrak{g}_r),\dots,\\ \pd_t^1(\mathfrak{g}_1), \dots,  \pd_t^1(\mathfrak{g}_r), \mathfrak{g}_1, \dots, \mathfrak{g}_r ]^{\tr} |_{t=\theta}.
\end{multline}
Moreover, Anderson proved that (see \cite[Prop.~2.5.8]{HartlJuschka16}), we have a unique extension of $\delta_0 \circ \iota$ given by
\[
\widehat{\delta_0 \circ \iota}: \Mat_{1\times r}(\TT_{\theta}) \rightarrow \Mat_{rn+r-1\times 1}(\CC_{\infty})
\]
where $\TT_{\theta}$ is the set of elements $f=\sum c_it^i\in \TT$ so that as a function of $t$, $f$ converges in the disk $\{z\in \CC_{\infty}|\ \  \inorm{z}\leq q\}$.

Set $\widetilde{\bg}_{\bsy}:= \bg_{\bsy}\rB$ and $\widetilde{\bh}_{\bsalpha}:= \bh_{\bsalpha}\rB$, where $\rB$ is as in \eqref{E:CchangeofC}. By using Lemma \ref{L:AGFNP21}(i), \eqref{E:RATg2} and \eqref{E:deltaiota2} one can obtain
\begin{equation}\label{E:epsiotay2}
\widehat{\delta_0 \circ \iota}(\widetilde{\bg}_{\bsy}+\widetilde{\bh}_{\bsalpha}) = \bsy.
\end{equation}
Similarly, for $\widetilde{\Gamma}:= \mathfrak{B}_n \rB$ we have 
\begin{equation}\label{E:epsiotaomega2}
\widehat{\delta_0 \circ \iota}(\widetilde{\Gamma}_i) = -\bsomega_i,
\end{equation}
where $\widetilde{\Gamma}_i$ is the $i$-th row of $\widetilde{\Gamma}$.\par

\subsection{Logarithms of \texorpdfstring{$\mathcal{E}_n$}{En} and \texorpdfstring{$\Ext_\sT^1(\mathbf{1}_{\sP}, H_n)$}{ExtHn}}\label{SS:tmotiveconsextn}
We now define the pre-$t$-motive $Y_{\bsalpha}$  so that it is of dimension $r+1$ over $\oK(t)$ and for a chosen $\oK(t)$-basis $\bsy_{\alpha}$ of $Y_{\bsalpha}$, we have $\sigma \bsy_{\alpha}=\Phi_{Y_{\bsalpha}}\bsy_{\alpha}$ where 
\[
\Phi_{Y_{\bsalpha}} := \begin{bmatrix}
\Phi_{n} & {\bf{0}}\\
\bh_{\bsalpha} & 1
\end{bmatrix}\ \in \Mat_{r+1}(\oK[t]).
\]
It is easy to see  by using \eqref{E:RATg2} that $Y_{\bsalpha}$ has a rigid analytic trivialization $\Psi_{Y_{\bsalpha}} \in \GL_{r+1}(\TT)$ given by 
\[
\Psi_{Y_{\bsalpha}} = \begin{bmatrix}
\Psi_{n} & {\bf{0}}\\
\bg_{\bsy}\Psi_{n} & 1
\end{bmatrix}.
\]

\begin{lemma}[{cf.\ \cite[Prop.~6.1.3]{P08}}]\label{L:Yalpha2} 
The pre-$t$-motive $Y_{\bsalpha}$ is  a $t$-motive.
\end{lemma}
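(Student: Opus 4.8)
The plan is to realize $Y_{\bsalpha}$ as the pre-$t$-motive attached to a rigid analytically trivial $\bA$-finite dual $t$-motive, so that membership in $\sT$ follows at once from the definition of the category of $t$-motives; this is the same strategy as in \cite[Prop.~6.1.3]{P08}. Note first that, by the block lower-triangular shape of $\Phi_{Y_{\bsalpha}}$, the submodule of $Y_{\bsalpha}$ spanned by the first $r$ basis vectors is isomorphic to $H_n$, with quotient $\mathbf{1}_{\sP}$, so $Y_{\bsalpha}$ represents a class in $\Ext^1_{\sP}(\mathbf{1}_{\sP},H_n)$; but since a mere extension of pre-$t$-motives need not be a $t$-motive, one genuinely has to produce the dual $t$-motive structure.

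First I would record the two structural features of $\Phi_{Y_{\bsalpha}}$. Since $\Phi_n=\Phi_0(t-\theta)^n$ with $\Phi_0$ the cofactor matrix of the companion-type polynomial matrix $\Phi_{\phi}$ of \eqref{E:sigmaactDM}, and since $\bh_{\bsalpha}\in\Mat_{1\times r}(\oK[t])$ by its explicit construction in \S\ref{SS:gahaextn}, we get $\Phi_{Y_{\bsalpha}}\in\Mat_{r+1}(\oK[t])$. As $\Phi_{Y_{\bsalpha}}$ is block lower-triangular with a $1$ in the bottom-right corner, $\det\Phi_{Y_{\bsalpha}}=\det\Phi_n=\det(\Phi_{\phi})^{r-1}(t-\theta)^{rn}$; expanding $\det\Phi_{\phi}$ along its first column gives $\det\Phi_{\phi}=(-1)^{r-1}(t-\theta)/a_r^{(-r)}$, so $\det\Phi_{Y_{\bsalpha}}=c\,(t-\theta)^{rn+r-1}$ with $c=\big((-1)^{r-1}/a_r^{(-r)}\big)^{r-1}\in\oK^{\times}$ and $rn+r-1>0$ since $r\ge 2$. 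Let $\mathfrak{Y}_{\bsalpha}$ be the free $\oK[t]$-module of rank $r+1$ on which $\sigma$ acts through $\Phi_{Y_{\bsalpha}}$; these two properties are precisely the hypotheses ensuring that $\mathfrak{Y}_{\bsalpha}$ is also finitely generated and free over $\oK[\sigma]$, hence an $\bA$-finite dual $t$-motive (this is the point where one invokes the structural fact underlying Example~\ref{Ex:tmotive}; cf.\ \cite[Prop.~6.1.3]{P08} and \cite{HartlJuschka16}).

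Next I would verify rigid analytic triviality: $\Psi_n\in\GL_r(\TT)$ trivializes $\sH_n$ and the entries of $\bg_{\bsy}$ lie in $\TT$, so $\Psi_{Y_{\bsalpha}}\in\Mat_{r+1}(\TT)$ with $\det\Psi_{Y_{\bsalpha}}=\det\Psi_n\in\TT^{\times}$, i.e.\ $\Psi_{Y_{\bsalpha}}\in\GL_{r+1}(\TT)$; and the functional equation \eqref{E:RATg2}, $\bg_{\bsy}^{(-1)}\Phi_n-\bg_{\bsy}=\bh_{\bsalpha}$, is exactly what one needs in order to check block-entrywise that $\Psi_{Y_{\bsalpha}}^{(-1)}=\Phi_{Y_{\bsalpha}}\Psi_{Y_{\bsalpha}}$ (the top-left block being the known identity $\Psi_n^{(-1)}=\Phi_n\Psi_n$). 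Thus $\mathfrak{Y}_{\bsalpha}$ is a rigid analytically trivial $\bA$-finite dual $t$-motive, and since $Y_{\bsalpha}\cong\oK(t)\otimes_{\oK[t]}\mathfrak{Y}_{\bsalpha}$ by construction, $Y_{\bsalpha}$ lies in $\sT$, i.e.\ it is a $t$-motive. The one step that requires care is the assertion that $\mathfrak{Y}_{\bsalpha}$ is finitely generated and free over $\oK[\sigma]$: this is where it is essential that $\bh_{\bsalpha}$ has \emph{polynomial} entries and that $\det\Phi_{Y_{\bsalpha}}$ is, up to a nonzero constant in $\oK$, a power of $t-\theta$. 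Everything else is the block-matrix bookkeeping already assembled in \S\ref{SS:gahaextn}.
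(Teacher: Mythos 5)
Your reduction to producing an $\bA$-finite dual $t$-motive structure is the right strategy, and your verification of rigid analytic triviality and of $\det\Phi_{Y_{\bsalpha}}=c(t-\theta)^{rn+r-1}$ is correct. But the key step fails: it is \emph{not} true that $\Phi_{Y_{\bsalpha}}\in\Mat_{r+1}(\oK[t])$ together with $\det\Phi_{Y_{\bsalpha}}=c(t-\theta)^{rn+r-1}$ forces the naive lattice $\mathfrak{Y}_{\bsalpha}=\oplus_{i=1}^{r+1}\oK[t]\bse_i$ (with $\sigma\bse=\Phi_{Y_{\bsalpha}}\bse$) to be finitely generated over $\oK[\sigma]$. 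Indeed it is not: because of the $1$ in the bottom-right corner, the $\oK[t]$-span of $\bse_1,\dots,\bse_r$ is a $\sigma$-stable submodule, and the quotient is $\oK[t]\bar{\bse}_{r+1}$ with $\sigma(f\bar{\bse}_{r+1})=f^{(-1)}\bar{\bse}_{r+1}$. On this quotient the $\oK[\sigma]$-action never raises degrees in $t$, so any finite set of generators has bounded $t$-degree and the quotient is not finitely generated over $\oK[\sigma]$; since quotients of finitely generated modules are finitely generated, $\mathfrak{Y}_{\bsalpha}$ itself cannot be finitely generated over $\oK[\sigma]$. (A minimal instance of the same phenomenon: $\sigma e_1=(t-\theta)e_1$, $\sigma e_2=e_2$ has determinant $t-\theta$ but is not $\oK[\sigma]$-finite.) So the determinant condition is necessary but by no means sufficient, and your appeal to "the structural fact underlying Example~\ref{Ex:tmotive}" does not apply here.

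This gap is precisely why the paper (following \cite[Prop.~6.1.3]{P08}) does not work with $\Phi_{Y_{\bsalpha}}$ directly but first twists by the Carlitz motive: one defines $\cN$ free of rank $r+1$ over $\oK[t]$ with $\sigma\bse=(t-\theta)\Phi_{Y_{\bsalpha}}\bse$, so that the troublesome trivial quotient becomes the Carlitz dual $t$-motive $\sC$, which \emph{is} free and finitely generated over $\oK[\sigma]$. The short exact sequence $0\to\sH_n\otimes_{\oK[t]}\sC\to\cN\to\sC\to 0$ together with \cite[Prop.~4.3.2]{ABP04} then gives $\oK[\sigma]$-finiteness and freeness of $\cN$, one checks $(t-\theta)^v\cN\subseteq\sigma\cN$, and hence $\cN$ is an $\bA$-finite dual $t$-motive whose associated pre-$t$-motive is $Y_{\bsalpha}\otimes C_1$. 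Finally, since $\sT$ is Tannakian and contains $C_1$ (hence $C_1^{\vee}$), one descends from $Y_{\bsalpha}\otimes C_1\in\sT$ to $Y_{\bsalpha}\in\sT$ via \cite[\S3.4.10]{P08}. To repair your argument you must insert this Carlitz twist (or an equivalent device); the rest of your write-up can stay as is.
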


\begin{proof}
Let $\cN:= \oplus_{i=1}^{r+1} \oK[t]\bse_i$ be the free $\oK[t]$-module with $\oK[t]$-basis given by the entries of $\bse :=(\bse_1, \dots, \bse_{r+1})^{\tr}$.  We equip $\cN$ with a left $\oK[t, \sigma]$-module structure by setting \[
\sigma \bse := (t-\theta)\Phi_{Y_{\bsalpha}}\bse.
\]
Recall from the proof of Proposition \ref{P:equalend2} that $\sC:=\sC_1$ is the $\bA$-finite dual $t$-motive associated  to the Carlitz module $\mathbf{C}^{\otimes 1}=(\GG_{a.K}, C^{\otimes 1})$. Then $\sigma \cdot f = (t-\theta)f^{(-1)}$ for all $f \in \sC$. We obtain the following short exact sequence of left $\oK[t, \sigma]$-modules:
\begin{equation}\label{SEStensor}
0\rightarrow \sH_n \otimes_{\oK[t]} \sC\rightarrow \cN \rightarrow \sC\rightarrow 0.
\end{equation}
Since $\sC$ and $\sH_n \otimes_{\oK[t]} \sC$ are finitely generated left $\oK[\sigma]$-modules, it follows from \cite[Prop.~4.3.2]{ABP04} that $\cN$ is free and finitely generated as a left $\oK[\sigma]$-module. Since $\sH_n \otimes_{\oK[t]} \sC$ is an $\bA$-finite dual $t$-motive, we have $(t-\theta)^u(\sH_n \otimes_{\oK[t]} \sC) \subseteq \sigma(\sH_n \otimes_{\oK[t]} \sC)$ for some $u \in \ZZ_{\geq 1}$. Moreover, $(t-\theta)\sC=\sigma \sC$. By \eqref{SEStensor}, we obtain $(t-\theta)^v\cN \subseteq \sigma\cN$ for some $v \in \ZZ_{\geq 1}$. Thus, we see that $\cN$ is an $\bA$-finite dual $t$-motive. Since $\cN \otimes_{\oK[t]}\oK(t) \cong Y_{\bsalpha} \otimes_{\oK(t)} \sC$ as left $\oK(t)[\sigma, \sigma^{-1}]$-modules, $Y_{\bsalpha} \otimes_{\oK(t)} \sC$ is a $t$-motive and hence, it follows from the discussion in \cite[Sec. 3.4.10]{P08} that $Y_{\bsalpha}$ is a $t$-motive.
\end{proof}

Recall that $\mathbf{1}_{\sP}$ is the trivial object in $\sP$ and  $\rL_n=\End_{\sT}(H_n)$. One can define a $\rL_n$-vector space structure on the group $\Ext_\sT^1(\mathbf{1}_{\sP}, H_n)$ as follows: Let $Y_1$ and $Y_2$ represent classes in $\Ext_\sT^1(\mathbf{1}_{\sP}, H_n)$. If the multiplication by $\sigma$ on suitable $\oK(t)$-bases of $Y_1$ and $Y_2$ are represented by 
$\begin{bsmallmatrix}\Phi_n & \mathbf{0}\\ \bv_1 & 1\end{bsmallmatrix}\in \Mat_{r+1}(\oK(t)) \, {\mathrm{and}} \,  \begin{bsmallmatrix}\Phi_n & \mathbf{0}\\ \bv_2 & 1\end{bsmallmatrix}\in \Mat_{r+1}(\oK(t))$
respectively, then their Baer sum $Y_1+Y_2$ represents a class in $\Ext_\sT^1(\mathbf{1}_{\sP},H_n)$ so that the multiplication by $\sigma$ on a chosen $\oK(t)$-basis of $Y_1+Y_2$ is represented by the matrix 
$\begin{bsmallmatrix}\Phi_n & \mathbf{0}\\ \bv_1+\bv_2 & 1\end{bsmallmatrix} \in \Mat_{r+1}(\oK(t))$.

Recall the $\oK[t]$-basis $\bsh_{\mathcal{E}_n}$ of $H_n$ from \S\ref{S:tmoduleEn}. We now suppose that the element $e\in \rL_n$ is represented by $\mathfrak{E} \in \Mat_{r}(\oK(t))$, that is $[e(h_1),\dots,e(h_r)]^{\tr}=\mathfrak{E}\bsh_{\mathcal{E}_n}$. Then the element $e\cdot Y_1=e_*Y_1$, the action of $e$ on $Y_1$, defines a class in $\Ext_\sT^1(\mathbf{1}_{\sP}, H_n)$ such that the multiplication by $\sigma$ on a chosen $\oK(t)$-basis of $e_*Y_1$ is represented by the matrix 
$\begin{bsmallmatrix}\Phi_n & \mathbf{0}\\ \bv_1 \mathfrak{E} & 1\end{bsmallmatrix}\in \Mat_{r+1}(\oK(t))$.
Note that by the construction and Lemma \ref{L:Yalpha2}, $Y_{\bsalpha}$ represents a class in $\Ext_{\sT}(\mathbf{1}_{\sP}, H_n)$.

For the rest of the paper,  we set $K_n$ to be the fraction field of $\End(\mathcal{E}_n)$. Before introducing the main result of this subsection, we say that  the elements $\bsz_1,\dots,\bsz_k\in \Lie(\mathcal{E}_n)(\CC_{\infty})$ are \textit{linearly independent over $K_n$} if whenever $\rd \rP_1\bsz_1+\dots+\rd \rP_k\bsz_k=0$ for some $\rP_1,\dots,\rP_k\in K_n$, we have $\rP_1=\dots=\rP_k=0$. 
 
 Recall the integer $\bs=[\rL_{\phi}:\mathbb{F}_q(t)]=[K_{\phi}:K]$. By Proposition~\ref{P:equalend2} and Proposition \ref{P:rings2}, $\bs = [\rL_{n}:\mathbb{F}_q(t)]=[K_{n}:K]$. Suppose that $\{\bsomega_1, \dots, \bsomega_{r/\bs}\}$ is the maximal $K_n$-linearly independent subset of $\{\bsomega_1, \dots, \bsomega_{r}\}$.

\begin{theorem}\label{T:Trivial2}
	For each $1 \leq \ell \leq m$, let $\bsy_{\ell}\in \Lie(\mathcal{E}_n)(\CC_{\infty})$ be such that $\Exp_{G_{n}}(\bsy_{\ell})=\bsalpha_{\ell}\in \mathcal{E}_n(\oK)$. Suppose that the set $\{\bsomega_1, \dots, \bsomega_{r/\bs}, \bsy_1, \dots, \bsy_m\}$ is linearly independent over $K_n$. We further let $Y_{\ell}:=Y_{\bsalpha_{\ell}}$. Then, for any choice of endomorphisms $e_1, \dots, e_m \in \rL_n$ not all zero, the equivalence class $\mathcal{S}:=e_{1*}Y_{1} + \dots + e_{m*}Y_{m}\in \Ext_\sT^1(\mathbf{1}_{\sT},H_n)$ is non-trivial.
\end{theorem}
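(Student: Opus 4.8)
The plan is to argue by contradiction, following the strategy of Chang--Papanikolas \cite{CP12} and of Namoijam--Papanikolas \cite{NPapanikolas21}: assuming $\mathcal{S}$ splits, I will descend its rigid analytic trivialization along the map $\widehat{\delta_0 \circ \iota}$ of \S\ref{SS:gahaextn} to produce a nontrivial $K_n$-linear relation among $\bsomega_1, \dots, \bsomega_{r/\bs}, \bsy_1, \dots, \bsy_m$, contradicting the hypothesis. To set this up, let $\mathfrak{E}_\ell \in \Mat_r(\oK(t))$ represent $e_\ell$, so that $[e_\ell(h_1), \dots, e_\ell(h_r)]^{\tr} = \mathfrak{E}_\ell \bsh_{\mathcal{E}_n}$ and $\mathfrak{E}_\ell^{(-1)} \Phi_n = \Phi_n \mathfrak{E}_\ell$; then $\mathcal{S}$ has cocycle $\widetilde{\bh} := \sum_\ell \bh_{\bsalpha_\ell} \mathfrak{E}_\ell$ and a rigid analytic trivialization whose bottom row is $\widetilde{\bg}\,\Psi_n$, where $\widetilde{\bg} := \sum_\ell \bg_{\bsy_\ell} \mathfrak{E}_\ell$ satisfies $\widetilde{\bg}^{(-1)} \Phi_n - \widetilde{\bg} = \widetilde{\bh}$ by \eqref{E:RATg2}. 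Since each $Y_\ell$, hence $\mathcal{S}$, is a $t$-motive (Lemma~\ref{L:Yalpha2}) and $\sT$ is a full subcategory of $\sP$, the class $\mathcal{S}$ is trivial in $\Ext^1_{\sT}(\mathbf{1}_{\sT}, H_n)$ precisely when there is $\mathfrak{c} \in \Mat_{1 \times r}(\FF_q(t))$ with $\bu := \widetilde{\bg} - \mathfrak{c}\Psi_n^{-1} \in \Mat_{1 \times r}(\oK(t))$ (cf.\ \cite[\S6]{P08}). Comparing the poles of the rational vector $\bu$ with those of $\bu^{(-1)}\Phi_n = \bu + \widetilde{\bh}$, and using that a nonempty finite set of places of $\oK(t)$ stable under $t \mapsto t^{q^{-1}}$ cannot contain a place transcendental over $\FF_q$, one sees that all poles of $\bu$ lie over $\oFq$. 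Hence one may choose $b \in \bA \setminus \{0\}$ (automatically with $b(\theta) \neq 0$) such that $b\bu$ and all $b\mathfrak{E}_\ell$ are polynomial, each $b e_\ell$ is the image of an $\rP_\ell \in \End(\mathcal{E}_n)$ under Proposition~\ref{P:rings2}, and, writing $\Psi_n^{-1}\rB = D\,\widetilde{\Gamma}$ with $D \in \GL_r(\FF_q(t))$ and $\widetilde{\Gamma} = \mathfrak{B}_n\rB$, the entries of $b\,\mathfrak{c}D =: (g_1, \dots, g_r)$ all lie in $\bA$.

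Next I would run the descent. Multiplying $b\widetilde{\bg} = b\bu + b\mathfrak{c}\Psi_n^{-1}$ on the right by $\rB$, adding $b\widetilde{\bh}\rB$ to both sides, and applying $\widehat{\delta_0 \circ \iota}$, I would use the $\bA$-equivariance $\widehat{\delta_0 \circ \iota}(a \cdot X) = \rd_{\varphi_n}(a)\,\widehat{\delta_0 \circ \iota}(X)$, the identities \eqref{E:epsiotay2} and \eqref{E:epsiotaomega2}, and the compatibility between right multiplication of a $\bg$-vector by $b\mathfrak{E}_\ell$ and application of $\rd\rP_\ell$ (this is the computation of $\rd\rP_\eta$ in \eqref{E:dEnd2}, read backwards). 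This yields
\[
\sum_{\ell=1}^m \rd\rP_\ell\, \bsy_\ell + \sum_{i=1}^r \rd\big(\varphi_n(g_i)\big)\, \bsomega_i = \widehat{\delta_0 \circ \iota}\big(b(\bu + \widetilde{\bh})\rB\big).
\]
The heart of the proof is that the right-hand side is $\mathbf{0}$: one has $b(\bu + \widetilde{\bh})\rB = b\,\bu^{(-1)}\Phi_n\rB = b\,(\bu\rB)^{(-1)}\rA \in \Mat_{1 \times r}(\oK[t])$, and since $\rA|_{t=\theta}$ has every column after the first equal to zero, the explicit formula \eqref{E:deltaiota2} for $\delta_0 \circ \iota$ forces $\widehat{\delta_0 \circ \iota}(b(\bu + \widetilde{\bh})\rB) = \mathbf{0}$. (For $n \geq 1$ one first extracts the factor $(t-\theta)^n$ from $\rA$, writing $b(\bu + \widetilde{\bh})\rB = (t-\theta)^n \mathfrak{q}$ with $\mathfrak{q} = b\,(\bu\rB)^{(-1)}\widetilde{\rA}$; then $\widehat{\delta_0 \circ \iota}((t-\theta)^n \mathfrak{q})$ is supported in its top $r-1$ coordinates, which equal $(\mathfrak{q}_2(\theta), \dots, \mathfrak{q}_r(\theta)) = \mathbf{0}$ because $\widetilde{\rA}|_{t=\theta}$, too, has only its first column nonzero.) Hence $\sum_\ell \rd\rP_\ell\, \bsy_\ell + \sum_i \rd(\varphi_n(g_i))\, \bsomega_i = \mathbf{0}$ in $\Lie(\mathcal{E}_n)(\CC_\infty)$.

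Finally, since $\{\bsomega_1, \dots, \bsomega_{r/\bs}\}$ is a maximal $K_n$-linearly independent subset of the $\bA$-basis $\{\bsomega_1, \dots, \bsomega_r\}$ of $\Lambda_{\mathcal{E}_n}$, every $\bsomega_i$ with $i > r/\bs$ is a $K_n$-linear combination of $\bsomega_1, \dots, \bsomega_{r/\bs}$, so the last identity becomes a $K_n$-linear relation among $\bsomega_1, \dots, \bsomega_{r/\bs}, \bsy_1, \dots, \bsy_m$ with $\rP_\ell \in K_n$ the coefficient of $\bsy_\ell$. By the linear independence hypothesis all coefficients vanish, so $\rP_\ell = 0$ and hence $e_\ell = 0$ for every $\ell$ (Propositions~\ref{P:equalend2} and \ref{P:rings2}), contradicting the choice of $e_1, \dots, e_m$. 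The main obstacle is the bookkeeping of the first two steps — keeping the twisting matrices $\rB$, $\chi_n$, $\Psi_n$, $\mathfrak{B}_n$ straight, matching the block description \eqref{E:dEnd2} of $\rd\rP_\ell$ against the hyperderivative formula \eqref{E:deltaiota2}, and above all verifying that $b(\bu + \widetilde{\bh})\rB$ is polynomial and (for $n\ge1$) divisible by $(t-\theta)^n$, which is exactly what makes the descended relation homogeneous.
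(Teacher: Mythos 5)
Your proposal is correct and follows essentially the same route as the paper's proof: assume the class splits, combine the rational splitting vector with the $\FF_q(t)$-comparison of rigid analytic trivializations, push the resulting identity through $\widehat{\delta_0 \circ \iota}$ using \eqref{E:epsiotay2}, \eqref{E:epsiotaomega2}, the block structure \eqref{E:dEnd2}, and the $(t-\theta)$-divisibility of $\rA$ to kill the term $\bu^{(-1)}\Phi_n\rB$, and thereby produce a $K_n$-linear relation contradicting the hypothesis. Your only deviations — clearing denominators with $b \in \bA$ and invoking $\bA$-equivariance of $\widehat{\delta_0 \circ \iota}$ instead of working with the diagonal matrices $\Delta_i$ and the endomorphisms $f_i$, and sketching the pole-comparison argument rather than citing \cite[p.~146]{P08} — are cosmetic.
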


\begin{proof} Our proof adapts the ideas of the proof of \cite[Thm.~4.2.2]{CP12}.  Suppose on the contrary that $\mathcal{S}$ is trivial in $\Ext_\sT^1(\mathbf{1}_{\sP}, H_n)$ and consider  $E_{\ell} = ((E_{\ell})_{u,v})\in \Mat_{r}(\oK(t))$ satisfying $[e_{\ell}(h_1),\dots,e_{\ell}(h_r)]^{\tr}=E_{\ell}\bsh_{\mathcal{E}_n}$. Set $\bg_{\ell}:=\bg_{\bsy_\ell}$ and $\bh_{\ell}=\bh_{\bsalpha_{\ell}}$. Note that by choosing an appropriate $\oK(t)$-basis~$\widetilde{\bsh}$ for $\mathcal{S}$, the multiplication by $\sigma$ on $\widetilde{\bsh}$ and a corresponding rigid analytic trivialization may be represented by 
\[\Phi_\mathcal{S}~:=~\begin{bmatrix}
\Phi_n & \mathbf{0} \\
\sum_{\ell=1}^m\bh_{\ell} E_{\ell} & 1
\end{bmatrix} \in \GL_{r+1}(\oK(t)), \, \, \,
\Psi_\mathcal{S}~:=~\begin{bmatrix}
\Psi_n & \mathbf{0} \\
\sum_{\ell=1}^m\bg_{\ell} E_\ell\Psi_n& 1
\end{bmatrix} \in \GL_{r+1}(\TT).\]

  Since $\mathcal{S}$ is trivial, there exists $\bsgamma = \begin{bsmallmatrix} \Id_{r} & \mathbf{0}\\ \gamma_{1} \dots \gamma_{r} & 1\end{bsmallmatrix} \in \GL_{r+1}(\oK(t))$ such that letting $\widetilde{\bsh}' := \bsgamma\widetilde{\bsh}$ implies that $\sigma \widetilde{\bsh}'= (\Phi_n \oplus (1))\widetilde{\bsh}'$, where $\Phi_n\oplus (1) \in \GL_{r+1}(\oK(t))$ is the block diagonal matrix with $\Phi_n$ and $1$ down the diagonal blocks and all other entries are zero. Thus, we obtain 
\begin{equation}\label{E:Spe1}
\bsgamma^{(-1)}\Phi_\mathcal{S} = (\Phi_n \oplus (1)) \bsgamma.
\end{equation}
Note that by the same argument in \cite[p. 146]{P08} applied to an equation of the form \eqref{E:Spe1}, for $1\leq i \leq r$, the denominator of each $\gamma_{i}$ is in $\bA$ and hence is regular at $t = \theta, \theta^q, \theta^{q^2}, \dots$. For each $\ell$, we let $\bsalpha_{\ell}=[\alpha_{\ell,1}, \dots, \alpha_{\ell,rn+r-1}]^{\tr}$. Comparing the $(r+1)$-st row of both sides of \eqref{E:Spe1} yields 
\begin{equation}\label{E:entry}
[\gamma_1, \dots, \gamma_r]^{(-1)}\Phi_n + \sum_{\ell=1}^m \bh_{\ell}E_{\ell} = [\gamma_1, \dots, \gamma_r].
\end{equation}
Multiplying both sides of \eqref{E:Spe1} from the right by $\Psi_\mathcal{S}$ implies that $(\bsgamma \Psi_\mathcal{S})^{(-1)} = (\Phi_n \oplus (1)) (\bsgamma \Psi_\mathcal{S})$ and therefore, by \cite[$\S$4.1.6]{P08}, for some 
$\bsdelta = \begin{bsmallmatrix} \Id_{r} & \mathbf{0}\\ \delta_{1} \dots \delta_{r}&1\end{bsmallmatrix}~\in~\GL_{r+1}(\FF_q(t))$,
we have 
$
\bsgamma \Psi_\mathcal{S} = (\Psi_n \oplus (1))\bsdelta.
$
This implies that
\begin{equation}\label{ExtProof2}
[\gamma_{1},\dots,\gamma_{r}] + \sum_{\ell=1}^m\bg_{\ell} E_{\ell} = [\delta_{1}, \dots, \delta_{r}]\Psi_n^{-1}.
\end{equation}
Adding \eqref{E:entry} to \eqref{ExtProof2} and right multiplication of both sides by $\rB$ imply
\begin{equation}\label{E:eps11}
[\gamma_1, \dots, \gamma_r]^{(-1)}\Phi_n\rB  + \sum_{\ell=1}^m(\widetilde{\bg}_{\ell}+\widetilde{\bh}_{\ell}) \rB^{-1}E_{\ell}\rB = [\delta_{1}, \dots, \delta_{r}]D\widetilde{\Gamma}.
\end{equation}
Using \eqref{E:deltaiota2}, we obtain
\begin{equation}\label{E:eps12}
\delta_0 \circ \iota\left([\gamma_1, \dots, \gamma_r]^{(-1)}\Phi_n \rB\right) = 0. 
\end{equation}
Set $\bQ_{\ell}:=\rB^{-1}E_{\ell} \rB=((\rB'_{\ell})_{i,j})\in \Mat_r(\oK[t])$ and let $\rS_{\ell}:= [(\rB'_{\ell})_{r,1}, \dots, (\rB'_{\ell})_{r,r}]^{\tr}$ be its last column. Then, by using \eqref{E:deltaiota2} for $1\leq \ell \leq m$ we have
\[
\begin{split}
\widehat{\delta_0 \circ \iota}&\left((\widetilde{\bg}_{\ell}+\widetilde{\bh}_{\ell}) \bQ_{\ell} \right)\\&=
\left.\begin{bmatrix}
\rS_{\ell}^{\tr} & \pd_t^{1}(\rS_{\ell})^{\tr} & \dots & \dots  & \pd_{t}^{n-1}(\rS_{\ell})^{\tr} & \pd_t^n(\rS_{\ell})^{\tr}\\
         & \bQ_{\ell}^{\tr} & \pd_t^1(\bQ_{\ell})^{\tr} & \dots  & \pd_t^{n-2}(\bQ_{\ell})^{\tr}& \pd_t^{n-1}(\bQ_{\ell})^{\tr}\\
         &&\ddots&\ddots&\vdots&\pd_t^{n-2}(\bQ_{\ell})^{\tr}\\
         &&&\ddots&\vdots&\vdots\\
         &&&&\bQ_{\ell}^{\tr}&\pd_t^1(\bQ_{\ell})^{\tr}\\
         &&&&&\bQ_{\ell}^{\tr}
\end{bmatrix}\begin{bmatrix}
\pd_t^n(\widetilde{\bg}_{\ell}+\widetilde{\bh}_{\ell})^{\tr})\\
\pd_t^{n-1}(\widetilde{\bg}_{\ell}+\widetilde{\bh}_{\ell})^{\tr}\\
\vdots\\
\pd_t^1(\widetilde{\bg}_{\ell}+\widetilde{\bh}_{\ell})^{\tr}\\
(\widetilde{\bg}_{\ell}+\widetilde{\bh}_{\ell})^{\tr}
\end{bmatrix}\right|_{t=\theta}.
\end{split}
\]
Let $(\widetilde{\bg}_{\ell}+\widetilde{\bh}_{\ell})_r$ be the last entry of $\widetilde{\bg}_{\ell}+\widetilde{\bh}_{\ell}$ and let $\rP_{\ell}\in \End(G_n)$ be the morphism defined by $e_{\ell}$ via Proposition~\ref{P:fullyfaithful}. Using \cite[Prop.~4.1.1(b)]{CP12}, one finds that $\rS_{\ell}|_{t=\theta}=[0, \dots, 0, (\rB'_{\ell})_{r,r}(\theta)]^{\tr}$. Hence, we obtain by using \eqref{E:dEnd2} and \eqref{E:epsiotay2} that
\begin{equation}\label{E:linearrely}
\widehat{\delta_0 \circ \iota}\left((\widetilde{\bg}_{\ell}+\widetilde{\bh}_{\ell}) \bQ_{\ell} \right) = \rd\rP_{\ell} \, \, \widehat{\delta_0 \circ \iota} \left(\widetilde{\bg}_{\ell}+\widetilde{\bh}_{\ell}\right)
= \rd\rP_{\ell}\bsy_{\ell}.
\end{equation}
For $1\leq i \leq r$, let $\Delta_i \in \Mat_r(\FF_q(t))$ denote the diagonal matrix with $\delta_1 D_{i,1}+\cdots+ \delta_r D_{i,r}$ in the diagonal entries and zero everywhere else and let $\widetilde{\Gamma}_i$ be the $i$-th row of $\widetilde{\Gamma}$. Then,
\[
[\delta_{1}, \dots, \delta_{r}]D\widetilde{\Gamma}= \widetilde{\Gamma}_1\Delta_1+\dots+\widetilde{\Gamma}_r \Delta_r.
\]
Note that $[f_i(h_1),\dots,f_i(h_r)]^{\tr}=\Delta_i\bsh_{\mathcal{E}_n}$ for some $f_i \in \rL_n$ and $\rB^{-1}\Delta_i \rB= \Delta_i$. Let $\rP_{f_i} \in \End(G_n)$ be the morphism defined by $f_i$ via Proposition~\ref{P:fullyfaithful} and \eqref{E:actionhom2}. Then, by a similar calculation to \eqref{E:linearrely} and using \eqref{E:epsiotaomega2} we obtain
\begin{equation}\label{E:eps13}
\widehat{\delta_0 \circ \iota}\left([\delta_{1}, \dots, \delta_{r}]D\widetilde{\Gamma}\right) = -\rd\rP_{f_1}\bsomega_1-\dots-\rd\rP_{f_r}\bsomega_r.
\end{equation}
Finally, \eqref{E:eps11}, \eqref{E:eps12}, \eqref{E:linearrely}, and \eqref{E:eps13} yield
\[
\sum_{\ell=1}^m \rd\rP_{\ell}\bsy_{\ell}= -\rd\rP_{f_1}\bsomega_1-\dots-\rd\rP_{f_r}\bsomega_r.
\]
Since $e_1, \dots, e_m$ are not all zero, $\rP_\ell$ is nonzero for some $1 \leq \ell \leq m$. Moreover, since $D\in \GL_r(\FF_q(t))$ we have $\rP_{f_i}$ is nonzero for some $1 \leq i \leq r$. Thus, we get a contradiction to our assumption in the statement of the theorem. 
\end{proof}

\subsection{The proof of Theorem \ref{T:1}}\label{S:Proof1}
For $1 \leq \ell \leq m$, we continue with $\bg_\ell$,  $\bh_\ell$ and $Y_{\ell}$ as in the proof of Theorem~\ref{T:Trivial2}. Then, the block diagonal matrix $\Phi :=\oplus_{\ell=1}^m \Phi_{Y_\ell}\in \Mat_{(r+1)m}(\oK[t])$ represents the multiplication by $\sigma$ on a chosen $\oK(t)$-basis of the $t$-motive $Y :=\oplus_{\ell=1}^m Y_{\ell}$ constructed as the direct sum of $t$-motives $Y_{1},\dots,Y_m$. Note that the block diagonal matrix $\Psi:= \oplus_{\ell=1}^m\Psi_{Y_{\ell}}\in \GL_{(r+1)m}(\TT)$ is a rigid analytic trivialization of $Y$. \par
 
Define the $t$-motive $\textbf{X}$ such that the multiplication by $\sigma$ on a $\oK(t)$-basis is given by $\Phi_{\textbf{X}} \in \GL_{rm+1}(\oK(t))$ along with a rigid analytic trivialization $\Psi_{\textbf{X}} \in \GL_{rm+1}(\TT)$ where
 \begin{equation}\label{E:PhiPsiN}
 \Phi_{\textbf{X}} := \begin{bmatrix} \Phi_n && & \\ 
 &\ddots &&\\
& &\Phi_n &\\ 
 \bh_{1} & \dots& \bh_{m} & 1 \end{bmatrix} \quad \text{and} \quad \Psi_{\textbf{X}} := \begin{bmatrix} \Psi_n & && \\
 & \ddots &&\\
 & &\Psi_n&\\ 
 \bg_{1}\Psi_n & \dots& \bg_{m}\Psi_n & 1 \end{bmatrix}.
 \end{equation}
Observe that $\textbf{X}$ is an extension of $\mathbf{1}_{\sP}$ by $H_n^m$. Moreover, $\textbf{X}$ is a pullback of the surjective map $Y \twoheadrightarrow \mathbf{1}_{\sP}^m$ and the diagonal map $\mathbf{1}_{\sP} \rightarrow \mathbf{1}_{\sP}^m$. Thus, the two $t$-motives $Y$ and $\textbf{X}$ generate the same Tannakian subcategory of $\sT$ and hence, the Galois groups $\Gamma_{Y}$ and $\Gamma_{\textbf{X}}$ are isomorphic. For any $\FF_q(t)$-algebra $\rR$, an element of $\Gamma_{\textbf{X}}(\rR)$ is of the form 
 \[
\nu = \begin{bmatrix} \mu &&& \\ 
& \ddots &&\\
& & \mu &\\ 
\bv_{1} &\dots &\bv_m & 1\end{bmatrix},
 \]
 where $\mu \in \Gamma_{H_n}(\rR)$ and each $\bv_1, \dots, \bv_m \in \GG_a^{r}$. Since $H_n^m$ is a sub-$t$-motive of $\textbf{X}$, we have the following short exact sequence of affine group schemes over $\FF_q(t)$,
\begin{equation}\label{E:SESlog2}
1 \rightarrow W \rightarrow \Gamma_{\textbf{X}} \xrightarrow{\pi} \Gamma_{H_n} \rightarrow 1,
\end{equation}
where $\pi^{(\rR)}:\Gamma_{\textbf{X}}(\rR)\rightarrow \Gamma_{H_n}(\rR)$ is the map $\nu \mapsto \mu$ (cf.~\cite[p. 138]{CP12}). Note that via conjugation  \eqref{E:SESlog2} gives an action of any $\mu \in \Gamma_{H_n}(\rR)$ on 
\[
\bv = \begin{bmatrix} \Id_{r} && &\\ 
& \ddots &&\\
&&\Id_{r} & \\ 
\bu_1 & \dots & \bu_m & 1 \end{bmatrix} \in W(\rR)
\]
defined by 
\begin{equation}\label{E:Actionlog2}
\nu \bv \nu^{-1}= \begin{bmatrix}
\Id_{r} && &\\ 
&\ddots&&\\
&&\Id_{r} & \\ 
\bu_1\mu^{-1} & \dots & \bu_m \mu^{-1}  & 1 \end{bmatrix}.
\end{equation}
For $1\leq i \leq m$, we choose a $\oK(t)$-basis  for $H_n$

Let $\bsx\in\Mat_{rm+1\times 1}(\textbf{X})$ be the $\oK(t)$-basis of $\textbf{X}$ satisfying $\sigma \bsx = \Phi_{\textbf{X}} \bsx$. Letting  $\bsx =  [\bsx_1, \dots, \bsx_m, y]^\tr \in \Mat_{rm+1\times 1}(\textbf{X})$ so that each $\bsx_i\in H_n$, we see that $[\bsx_1,\dots,\bsx_m]^{\tr}$ forms a $\oK(t)$-basis of  $H_n^m$. On the other hand, by the discussion in \S\ref{SS:t-motives}, we see that the entries of $\Psi_{\textbf{X}}^{-1}\bsx$ form an $\FF_q(t)$-basis of $\textbf{X}^{\text{Betti}}$ and hence the entries of $\bu := [\Psi_n^{-1} \bsx_1, \dots, \Psi_n^{-1} \bsx_m]^\tr$ form an $\FF_q(t)$-basis of $(H_n^{\text{Betti}})^m$. We describe the action of $\Gamma_{H_n}(\rR)$ on $\rR\otimes_{\FF_q(t)}(H_n^{\text{Betti}})^m$  as follows (see also \cite[\S4.5]{P08}): For any $\mu \in \Gamma_{H_n}(\rR)$ and any $\bv_\ell \in \Mat_{1\times r}(\rR)$ where $1 \leq \ell \leq m$, the action of $\mu$ on $(\bv_1, \dots, \bv_m) \cdot \bu \in \rR \otimes_{\FF_q(t)} (H_n^m)^{\text{Betti}}$ can be represented by
\[
(\bv_1, \dots, \bv_m) \cdot \bu \mapsto (\bv_1 \mu^{-1}, \dots,  \bv_m \mu^{-1}) \cdot \bu.
\]

\noindent Thus, by \eqref{E:Actionlog2} the action of $\Gamma_{H_n}$ on $(H_n^{\text{Betti}})^m$ is compatible with the action of $\Gamma_{H_n}$ on $W$.  \par

Observe the following facts:
\begin{itemize}
	\item[(i)] $\rL_n$ can be embedded into $\End((H_n)^{\text{Betti}})^m$ and thus, one can regard $(H_n^{\text{Betti}})^m$ to be the $\rL_n$-vector space $\mathcal{V}'$ over $\rL_n$.
	\item[(ii)] Setting $G'$ to be the $\rL_n$-group $\GL(\mathcal{V}')$ and using \cite[Lem.~3.5.6]{CP12} and Theorem~\ref{T:equalitygamma2}, we have that $\Gamma_{H_n}=\Res_{\rL_n/\FF_q(t)}(G')$, where $\Res_{\rL_n/\FF_q(t)}$ is the Weil restriction of scalars over $\FF_q(t)$. 
	\item[(iii)]By Theorem \ref{T:Pap}, $\Gamma_{\textbf{X}}$ is $\FF_q(t)$-smooth.
	\item[(iv)] Set $\bar{V}$ to be the vector group over $\FF_q(t)$ associated to the underlying $\FF_q(t)$-vector space $(\mathcal{V}')^{\oplus m}$. Then $W$ is equal to the scheme-theoretic intersection $\Gamma_{\textbf{X}}\cap \bar{V}$.
\end{itemize}

Since the map $\pi:\Gamma_{Y}\rightarrow \Gamma_{H_n}$ is surjective, by using the above facts, \cite[Ex. A.1.2]{CP12}, and \cite[Prop. A.1.3]{CP12}, we have the following.

\begin{lemma}\label{L:smooth}
	The affine group scheme	$W$ over $\FF_q(t)$ is $\FF_q(t)$-smooth.
\end{lemma}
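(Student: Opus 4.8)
The plan is to deduce the smoothness of $W$ from the general structural result \cite[Prop.~A.1.3]{CP12}, which asserts that if $G_1 \twoheadrightarrow G_2$ is a surjection of affine group schemes over a field $F$ with $G_1$ smooth, and $G_2$ is smooth, then the kernel is smooth; and more specifically from the setup in \cite[Ex.~A.1.2]{CP12}, tailored to the situation where the kernel is a vector group sitting inside $\Gamma_{\mathbf X}$. Concretely, I would first recall from the short exact sequence \eqref{E:SESlog2} that $W = \Ker(\pi\colon \Gamma_{\mathbf X} \to \Gamma_{H_n})$. By fact (iii) above, $\Gamma_{\mathbf X}$ is $\FF_q(t)$-smooth (this is Theorem~\ref{T:Pap} applied to the $t$-motive $\mathbf X$, whose Galois group is isomorphic to that of $Y$). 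By fact (ii), $\Gamma_{H_n} = \Res_{\rL_n/\FF_q(t)}(G')$ is a Weil restriction of a general linear group over the separable field extension $\rL_n/\FF_q(t)$, hence it is smooth over $\FF_q(t)$. Since $\pi$ is surjective (it is the projection $\nu \mapsto \mu$ arising from $H_n^m$ being a sub-$t$-motive of $\mathbf X$), the quotient criterion applies.

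The second ingredient is fact (iv): $W$ is the scheme-theoretic intersection $\Gamma_{\mathbf X} \cap \bar V$ inside $\GL_{rm+1/\FF_q(t)}$, where $\bar V$ is the vector group on $(\mathcal V')^{\oplus m}$. This identification is what makes the argument of \cite[Ex.~A.1.2]{CP12} applicable: in that example one has precisely a smooth affine group scheme meeting a vector group, and the intersection is shown to be a smooth (in fact vector) subgroup. I would invoke this directly, citing the reference, to conclude that $W$ is $\FF_q(t)$-smooth. The bulk of the proof is therefore bookkeeping: checking that the hypotheses of \cite[Ex.~A.1.2]{CP12} and \cite[Prop.~A.1.3]{CP12} are met, all of which have been assembled as facts (i)--(iv) immediately preceding the statement.

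The main (minor) obstacle is verifying the surjectivity of $\pi$ and the precise description of $W$ as $\Gamma_{\mathbf X} \cap \bar V$ with the correct $\rL_n$-module structure, so that the cited lemmas from the appendix of \cite{CP12} apply verbatim rather than merely by analogy; this is why fact (i) (embedding $\rL_n$ into $\End((H_n)^{\mathrm{Betti}})^m$ and viewing $(H_n^{\mathrm{Betti}})^m$ as an $\rL_n$-vector space $\mathcal V'$) is recorded explicitly. Once that compatibility between the $\Gamma_{H_n}$-action on $(H_n^{\mathrm{Betti}})^m$ and on $W$ via conjugation \eqref{E:Actionlog2} is in hand — which was established in the paragraph preceding the list — the smoothness of $W$ is immediate. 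I do not anticipate any genuinely hard step here; the lemma is a formal consequence of Papanikolas's framework together with the appendix of Chang--Papanikolas, and the proof should be just a few lines citing those results.

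\begin{proof}
	This follows from the facts (i)--(iv) listed above together with \cite[Ex.~A.1.2]{CP12} and \cite[Prop.~A.1.3]{CP12}. Indeed, by fact (iii), $\Gamma_{\mathbf X}$ is $\FF_q(t)$-smooth, and by fact (ii), $\Gamma_{H_n} = \Res_{\rL_n/\FF_q(t)}(G')$ is $\FF_q(t)$-smooth as a Weil restriction of $\GL(\mathcal V')$ along the separable extension $\rL_n/\FF_q(t)$. Since $H_n^m$ is a sub-$t$-motive of $\mathbf X$, the map $\pi\colon \Gamma_{\mathbf X} \to \Gamma_{H_n}$ in \eqref{E:SESlog2} is surjective. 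By fact (iv), $W = \Gamma_{\mathbf X}\cap \bar V$ is the scheme-theoretic intersection of $\Gamma_{\mathbf X}$ with the vector group $\bar V$ attached to $(\mathcal V')^{\oplus m}$, and the conjugation action of $\Gamma_{H_n}$ on $W$ described in \eqref{E:Actionlog2} is compatible with its action on $(H_n^{\text{Betti}})^m \cong \mathcal V'$ as in fact (i). Applying \cite[Prop.~A.1.3]{CP12} to the surjection $\pi$ with smooth source and target, in the form of \cite[Ex.~A.1.2]{CP12}, we conclude that $W$ is $\FF_q(t)$-smooth.
\end{proof}
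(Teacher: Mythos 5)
Your proof is correct and takes essentially the same route as the paper, which likewise disposes of the lemma by assembling facts (i)--(iv), noting the surjectivity of $\pi$, and citing \cite[Ex.~A.1.2]{CP12} and \cite[Prop.~A.1.3]{CP12}. One caution on your preamble: the paraphrase of \cite[Prop.~A.1.3]{CP12} as ``a surjection of smooth affine group schemes has smooth kernel'' is false as stated in characteristic $p$ (e.g.\ the Frobenius isogeny of $\Ga$ has kernel $\alpha_p$); what actually makes the argument work is the extra structure in facts (i), (ii), (iv) --- namely that $W$ is the scheme-theoretic intersection of the smooth group $\Gamma_{\textbf{X}}$ with the vector group $\bar V$ and that the connected quotient $\Gamma_{H_n}=\Res_{\rL_n/\FF_q(t)}(G')$ acts linearly on it --- and since your final proof does invoke exactly these hypotheses before citing the appendix of \cite{CP12}, the argument goes through as in the paper.
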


Since $W$ is $\FF_q(t)$-smooth, we can regard $(H_n^\text{Betti})^m$ as a vector group over $\FF_q(t)$. Moreover we see that $W$ is a $\Gamma_{H_n}$-submodule of $(H_n^\text{Betti})^m$. Thus, by the equivalence of categories $\sT_{H_n} \approx \Rep(\Gamma_{H_n}, \FF_q(t))$, there exists a sub-$t$-motive $U$ of $H_n^m$ such that 
\begin{equation}\label{E:Tmotiveandbetti2}
W \cong U^{\text{Betti}}.
\end{equation}

Recall the fixed $\bA$-basis $\{\bsomega_1,\dots,\bsomega_r\}$ of $\Lambda_{\mathcal{E}_n}$ where $\bsomega_i = [\omega_{i, 1}, \dots, \omega_{i,rn+r-1}]^{\tr}\in \Lie(\mathcal{E}_n)(\CC_{\infty})$ for each $1 \leq i \leq r$ and its maximal $K_n$-linearly independent subset $\{\bsomega_1, \dots, \bsomega_{r/\bs}\}$.

\begin{theorem}\label{T:MaintTtr2}
	For each $1 \leq \ell \leq m$, let $\bsy_{\ell} =[y_{\ell,1}, \dots, y_{\ell,rn+r-1}]^{\tr} \in \Lie(\mathcal{E}_n)(\CC_{\infty})$ be such that $\Exp_{G_{n}}(\bsy_{\ell})\in \mathcal{E}_n(\oK)$. Suppose that the set $\{\bsomega_1, \dots, \bsomega_{r/\bs}, \bsy_1, \dots, \bsy_m\}$ is linearly independent over $K_n$. Then, $\dim \Gamma_{\textbf{\textup{X}}} = r^2/\bs+rm$. In particular, the following statements hold:
	\begin{itemize}
		\item[(i)] Let  $F_{\mathcal{E}_0,\delta}$ be the quasi-periodic function associated to the $\varphi_0$-biderivation $\delta$ which maps $t\mapsto (\tau, 0,\dots,0)$. If $n=0$, then
		\[
		\trdeg_{\oK}\oK \left(\bigcup\limits_{\ell=1}^{m}\bigcup\limits_{i=1}^{r}\bigcup_{j=1}^{r-1}\{\omega_{i,j},F_{\mathcal{E}_0,\delta}(\bsomega_i), y_{\ell, j}, F_{\mathcal{E}_0,\delta}(\bsy_i)\}\right) = r^2/\bs + rm.
		\]
		\item[(ii)] If $n\in \ZZ_{\geq 1}$, then 
		
		\[
		\trdeg_{\oK}\oK \left(\bigcup\limits_{\ell=1}^{m}\bigcup\limits_{i=1}^{r}\bigcup_{j=0}^{r-1}\{\omega_{i,rn+j}, y_{\ell, rn+j}\}\right) = r^2/\bs + rm.
		\]
	\end{itemize}
	
\end{theorem}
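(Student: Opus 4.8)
The plan is to apply Papanikolas's theorem (Theorem~\ref{T:TannakianMain}) to the $t$-motive $\mathbf{X}$ of \eqref{E:PhiPsiN}. Since $\det\Phi_{\mathbf{X}} = (\det\Phi_n)^m$ is an element of $\oK^{\times}$ times a power of $(t-\theta)$ and $\Psi_{\mathbf{X}}\in\GL_{rm+1}(\TT)$ trivializes it, Theorem~\ref{T:TannakianMain} gives $\trdeg_{\oK}\oK(\Psi_{\mathbf{X}}(\theta)) = \dim\Gamma_{\mathbf{X}}$. So everything reduces to two tasks: (a) show $\dim\Gamma_{\mathbf{X}} = r^2/\bs + rm$, and (b) identify $\oK(\Psi_{\mathbf{X}}(\theta))$, up to an algebraic extension, with the field generated by the coordinates displayed in (i) when $n=0$, resp.\ in (ii) when $n\ge 1$.

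For (a), I would work with the exact sequence \eqref{E:SESlog2}, which yields $\dim\Gamma_{\mathbf{X}} = \dim\Gamma_{H_n} + \dim W = r^2/\bs + \dim W$ by Theorem~\ref{T:equalitygamma2}. By Lemma~\ref{L:smooth} the group $W$ is $\FF_q(t)$-smooth, hence a vector group, and by \eqref{E:Tmotiveandbetti2} we have $\dim W = \dim_{\FF_q(t)}U^{\text{Betti}} = \dim_{\oK(t)}U$ for a sub-$t$-motive $U\subseteq H_n^m$, where $U$ may be taken to be the smallest sub-$t$-motive such that $[\mathbf{X}]$ lies in the image of $\Ext_\sT^1(\mathbf{1}_{\sP},U)\to\Ext_\sT^1(\mathbf{1}_{\sP},H_n^m)$. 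In particular $\dim W\le rm$, with equality iff $U = H_n^m$. To force $U = H_n^m$: the $t$-motive $H_n$ is simple by Proposition~\ref{P:simpletensor2} with $\End_\sT(H_n)=\rL_n$ a field, so $H_n^m$ is isotypic and every proper sub-$t$-motive is a direct summand; hence if $U\subsetneq H_n^m$ there is a nonzero morphism $\rho\colon H_n^m\to H_n$, necessarily of the form $(e_1,\dots,e_m)\in\rL_n^m\setminus\{0\}$, with $U\subseteq\ker\rho$. Since $[\mathbf{X}]=(Y_1,\dots,Y_m)$ under $\Ext_\sT^1(\mathbf{1}_{\sP},H_n^m)\cong\bigoplus_{\ell=1}^m\Ext_\sT^1(\mathbf{1}_{\sP},H_n)$ by construction of $\Phi_{\mathbf{X}}$, the pushout $\rho_*[\mathbf{X}] = e_{1*}Y_1+\dots+e_{m*}Y_m = [\mathcal{S}]$ would then be trivial, contradicting Theorem~\ref{T:Trivial2}, whose hypothesis coincides with the one assumed here. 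Therefore $U = H_n^m$, $\dim W = rm$, and $\dim\Gamma_{\mathbf{X}} = r^2/\bs + rm$.

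For (b), the block shape of $\Psi_{\mathbf{X}}$ in \eqref{E:PhiPsiN} and invertibility of $\Psi_n$ give $\oK(\Psi_{\mathbf{X}}(\theta)) = \oK\big(\Psi_n(\theta),\bg_{\bsy_1}(\theta),\dots,\bg_{\bsy_m}(\theta)\big)$, with $\bg_{\bsy_\ell}$ as in \S\ref{SS:gahaextn}. Using the relation $\Psi_n^{-1}(\theta) = D(\theta)\mathfrak{B}_n(\theta)$ with $D(\theta)\in\GL_r(\oK)$, the rows of $\Psi_n^{-1}(\theta)$ recorded in \eqref{E:matrix22}, Theorem~\ref{T:equalitygamma2}, and Lemma~\ref{L:AGFNP21}(i), one checks that $\oK(\Psi_n(\theta))$ is an algebraic extension of $\oK\big(\bigcup_{i,j}\{\omega_{i,j},F_{\mathcal{E}_0,\delta}(\bsomega_i)\}\big)$ when $n=0$ and of $\oK\big(\bigcup_{i,j}\{\omega_{i,rn+j}\}\big)$ when $n\ge 1$; adjoining the $\bg_{\bsy_\ell}(\theta)$ then adjoins, modulo $\oK$ (recall $\Exp_{\mathcal{E}_n}(\bsy_\ell)\in\mathcal{E}_n(\oK)$), precisely the coordinates $\{y_{\ell,j},F_{\mathcal{E}_0,\delta}(\bsy_\ell)\}$, resp.\ $\{y_{\ell,rn+j}\}$, by \eqref{E:gtheta1}. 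Since transcendence degree is insensitive to algebraic extensions, the transcendence degree of the field in (i), resp.\ (ii), equals $\trdeg_{\oK}\oK(\Psi_{\mathbf{X}}(\theta)) = r^2/\bs + rm$.

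The main obstacle is the equality $\dim W = rm$ in step (a): this is exactly where Theorem~\ref{T:Trivial2}, built on the explicit description of $\End(\mathcal{E}_n)$ in \S\ref{S:EndEn} and the computation of $\Gamma_{H_n}$ in Theorem~\ref{T:equalitygamma2}, does the real work, converting ``$\mathcal{S}$ never splits'' into ``$W$ fills the whole ambient vector group.'' Step (b) is routine bookkeeping but must be executed carefully to match the two distinct coordinate lists for $n=0$ and $n\ge 1$.
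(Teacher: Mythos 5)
Your proposal is correct and follows essentially the same route as the paper: bound $\dim\Gamma_{\mathbf{X}}$ by $r^2/\bs+rm$ via \eqref{E:SESlog2}, force $\dim W=rm$ by using simplicity of $H_n$ (Proposition~\ref{P:simpletensor2}) to produce a nonzero $\rho=(e_1,\dots,e_m)\in\rL_n^m$ vanishing on $U$ whose pushout $e_{1*}Y_1+\dots+e_{m*}Y_m$ would contradict Theorem~\ref{T:Trivial2}, and then match $\oK(\Psi_{\mathbf{X}}(\theta))$ with the displayed coordinates through \eqref{E:matrix22} and \eqref{E:gtheta1}. The only step you assert rather than derive is that the class of $\mathbf{X}$ is induced from $U$ (your ``smallest sub-$t$-motive'' characterization, i.e.\ Hardouin's lemma); the paper proves exactly this point by showing $\mathbf{X}/U$ splits, using that $\Gamma_{\mathbf{X}}$ acts on $\mathbf{X}^{\text{Betti}}/U^{\text{Betti}}$ through $\Gamma_{H_n}=\Gamma_{H_\phi}$ and that every object of $\Rep(\Gamma_{H_\phi},\FF_q(t))$ is completely reducible \cite[Cor.~3.5.7]{CP12}.
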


\begin{proof} We adapt the methods used in the proof of  \cite[Thm. 5.1.5]{CP12} and \cite[Lem.~1.2]{Har11}. By Theorem \ref{T:TannakianMain}, Theorem~\ref{T:equalitygamma2} and \eqref{E:SESlog2}, we have
 \begin{equation*}\label{E:TrdegLogGamma}
 \dim \Gamma_{\textbf{X}} = \trdeg_{\oK} \oK(\Psi_{\textbf{X}}(\theta)) \leq \frac{r^2}{\bs} + rm.
 \end{equation*}
Thus, we need to prove that $\dim W = rm$. 
By \eqref{E:Tmotiveandbetti2} it suffices to show that $U^{\text{Betti}} \cong (H_n^m)^{\text{Betti}}$.  

We first show that the extension $\textbf{X}/U$ is trivial in $\Ext_\sT^1(\mathbf{1}_{\sP}, H_n/U)$. Since $W \cong U^{\text{Betti}}$, we see that $\Gamma_{\textbf{X}}$ acts on $\textbf{X}^{\text{Betti}}/U^{\text{Betti}}$ through $\Gamma_{\textbf{X}}/W \cong  \Gamma_{H_{\phi}}$ via \eqref{E:SESlog2}. Note that  $\Gamma_{H_n} =\Gamma_{H_{\phi}}$ by Theorem \ref{T:equalitygamma2}. Hence, it follows that $\textbf{X}^{\text{Betti}}/U^{\text{Betti}}$ is an extension of $\FF_q(t)$ by $(H_n^m)^{\text{Betti}}/ U^{\text{Betti}}$ in $\Rep(\Gamma_{H_{\phi}}, \FF_q(t))$. Since every object in $\Rep(\Gamma_{H_{\phi}}, \FF_q(t))$ is completely reducible by \cite[Cor.~3.5.7]{CP12}, by the equivalence  $\sT_{H_{\phi}} \approx \Rep(\Gamma_{H_{\phi}}, \FF_q(t))$ of the categories, we get the required conclusion.

We prove $U^{\text{Betti}} \cong (H_n^m)^{\text{Betti}}$. Suppose on the contrary that $U^{\text{Betti}} \subsetneq (H_n^m)^{\text{Betti}}$. Since $H_n$ is simple by Proposition~\ref{P:simpletensor2} and $U$ is a sub-$t$-motive of $H_n^m$, there exists a non-trivial morphism $\bp~\in~\Hom_\sT(H_n^m, H_n)$ so that $U \subseteq \Ker \bp$. Moreover, the morphism $\bp$ factors through the map $H_n^m/U \rightarrow H_n^m/\Ker \bp $. Since $\bp \in \Hom_\sT(H_n^m, H_n)$, there exist $e_{1}, \dots, e_m\in\rL_n$ not all zero such that $\bp\left(\mathfrak{n}_1, \dots, \mathfrak{n}_m\right)= e_{1}(\mathfrak{n}_1)+\dots + e_m(\mathfrak{n}_m)$ where $\mathfrak{n}_1,\dots,\mathfrak{n}_m\in H_n$. Then, we deduce that the pushout $\bp_{*} \textbf{X} := e_{1*} Y_{1} +\dots+ e_{m*} Y_{m}$ is a quotient of $\textbf{X}/U$. By using the claim above, it follows that $\bp_{*}\textbf{X}$ is trivial in $\Ext_\sT^1(\mathbf{1}_{\sP}, H_n)$, which is a contradiction by Theorem~\ref{T:Trivial2}. Thus, $\dim W = rm$ which proves that $\dim \Gamma_{\textbf{X}} = r^2/\bs+rm$. Thus, by Theorem \ref{T:TannakianMain} and  \eqref{E:matrix22}, we obtain the first part of the theorem. On the other hand,  note that  
	\begin{equation*}
\oK(\Psi_m^{\tens}(\theta))= \oK(\tilde{\pi}^{m}\Upsilon(\theta)) = \oK\left(\bigcup_{i =1}^r\bigcup_{j=0}^{r-1}\widetilde{\pi}^{m} F_{G_0,\delta_j}(\lambda_i)\right)
	\end{equation*}
Then, by the construction of $\Psi_{\textbf{X}}$ and \eqref{E:gtheta1} we have
 \begin{equation}\label{E:thefield}
\oK(\Psi_{\textbf{X}}(\theta)) =  \oK\bigg(\bigcup\limits_{\ell=1}^{m}\bigcup\limits_{i=1}^{r}\bigcup_{j=1}^{r-1}\{\widetilde{\pi}\lambda_i, \widetilde{\pi}F_{G_0,\delta_j}(\lambda_i), y_{\ell, j}, F_{\mathcal{E}_0,\delta}(\bsy_i)\}\bigg)
 \end{equation}
for $n=0$, and 
 \begin{equation}\label{E:thefield2}
 \oK(\Psi_{\textbf{X}}(\theta)) =  \oK\bigg(\bigcup\limits_{\ell=1}^{m} \bigcup\limits_{i=1}^{r}\bigcup_{j=1}^{r-1}\{\widetilde{\pi}^{n+1}\lambda_i, \widetilde{\pi}^{n+1}F_{G_0,\delta_j}(\lambda_i),y_{\ell, rn}, y_{\ell,  rn+1}, \dots, y_{\ell, rn+r-1}\}\bigg)
 \end{equation}
otherwise. Since $\dim \Gamma_{\textbf{X}} = r^2/\bs+rm$, the second assertion now follows from Theorem \ref{T:TannakianMain}, \eqref{E:matrix22} and \eqref{E:thefield}.
\end{proof}

We are now ready to prove Theorem \ref{T:1} and Corollary \ref{C:Int0}.

\begin{proof}[Proof of Theorem~\ref{T:1}]
	Let $\{\bseta_1, \dots, \bseta_{j}\}\subseteq\{\bsomega_{1},\dots, \bsomega_{r}, \bsy_{1}, \dots, \bsy_{m}\}$ be a maximal $K_n$-linearly independent set containing $\{\bsy_1, \dots, \bsy_m\}$, where $r/\bs \leq j \leq r/\bs+m$. Suppose that $\bseta_{u} =[\eta_{u, 1}, \dots, \eta_{u, rn+r-1}]^\tr$ for $1 \leq u \leq j$. For $n=0$, we obtain
	\[
	\oK \left(\bigcup\limits_{\ell=1}^{m}\bigcup\limits_{i=1}^{r}\bigcup_{k=1}^{r-1}\{\omega_{i,j},F_{\mathcal{E}_0,\delta}(\bsomega_i), y_{\ell, k}, F_{\mathcal{E}_0,\delta}(\bsy_\ell)\}\right) =  \oK \bigg(\bigcup_{u=1}^{j}\{F_{\mathcal{E}_0,\delta}(\bseta_u), \eta_{u, 1}, \dots, \eta_{u, r-1}\}\bigg),
	\]
 and for $n\geq 1$, we have
	\[
	\oK \bigg(\bigcup_{i=1}^r\bigcup_{k=0}^{r-1}\bigcup_{\ell=1}^{m} \{\omega_{i,rn+k}, y_{\ell, rn+k}\}\bigg) =  \oK \bigg(\bigcup_{u=1}^{j}\{\eta_{u, rn}, \eta_{u, rn+1}, \dots, \eta_{u, rn+r-1}\}\bigg)
	\]
	where the equality follows from the structure of $\rd\rP$ for any $\rP\in \End(\mathcal{E}_n)$ given in \eqref{E:dEnd2} and \eqref{E:W2}. By Theorem~\ref{T:MaintTtr2}, the set  $\bigcup_{u=1}^{j}\{F_{\mathcal{E}_0,\delta}(\bseta_u), \eta_{u, 1}, \dots, \eta_{u, r-1}\}$ when $n=0$ and the set $\bigcup_{u=1}^{j}\{\eta_{u, rn}, \eta_{u, rn+1}, \dots, \eta_{u, rn+r-1}\}$  for $n\geq 1$, is algebraically independent over $\oK$ and hence, the algebraic independence of \eqref{E:T1i} and \eqref{E:T1ii} follow. Finally, the transcendence of $\mathcal{Y}_n$ for $n \in \ZZ_{\geq 0}$ over $\oK$ follows immediately.
\end{proof}

\begin{proof} [Proof of Corollary ~\ref{C:Int0}]\label{C:ProofofCorMain1}
   Let $\bg_{\bsy_1}, \dots, \bg_{\bsy_m}$ be as defined in \eqref{E:gformathcal{E}}. Note that by \eqref{E:UpsilonNV}, we have $(V^{-1})^\tr \in \Mat_r(\oK)$. Then, by \cite[Theorem~C part (c)]{NPapanikolas21}, for $n \geq 0$, we have 
   \[
   \oK\left( \bigcup\limits_{\ell=1}^{m} \{F_{\mathcal{E}_n, \delta_1}(\bsy_\ell), \dots, F_{\mathcal{E}_n, \delta_r}(\bsy_\ell)\}\right) =\oK\left(\bg_{\bsy_1}|_{t=\theta}, \dots, \bg_{\bsy_\ell}|_{t=\theta}\right).
   \]
    Moreover, by \eqref{E:gtheta1}, if $n=0$, 
   \[
   \oK\left(\bg_{\bsy_1}|_{t=\theta}, \dots, \bg_{\bsy_\ell}|_{t=\theta}\right) = \oK\left( \bigcup\limits_{\ell=1}^{m} \{F_{\mathcal{E}_0,\delta}(\bsy_{\ell}), y_{\ell, 2}, \dots, y_{\ell, r-1}\}\right),
   \]
   where $F_{\mathcal{E}_0,\delta}$ is the quasi-periodic function defined as in 
   Theorem~\ref{T:MaintTtr2}, and if $n \in \ZZ_{\geq 1}$, we have
   \[
    \oK\left(\bg_{\bsy_1}|_{t=\theta}, \dots, \bg_{\bsy_\ell}|_{t=\theta}\right) = \oK\left( \bigcup\limits_{\ell=1}^{m} \{y_{\ell, rn}, y_{\ell, rn+1}, \dots, y_{\ell, rn+r-1}\}  \right).
   \]
  Then, the desired results follow from Theorem~\ref{T:1}. 
\end{proof}

\section{The proof of the main result for \texorpdfstring{$G_n$}{Gn}}\label{S:MainResultsGn}
Our aim in \S\ref{SS:gahafortens} is to provide necessary ideas by constructing certain objects which can be easily derived from similar calculations as was done in \S\ref{S:MainResultsEn}. Whereas proofs of many results follow from similar arguments used in \S\ref{S:MainResultsEn}, we explicitly state whenever we use a different argument for our setting in the present section. 

\subsection{Construction of \texorpdfstring{$\bg_{\bsalpha}$}{ga} and \texorpdfstring{$\bh_{\bsalpha}$}{ha} for \texorpdfstring{$G_n$}{Gn}}\label{SS:gahafortens}
Let $n$ be a non-negative integer. We define elements $\bsy =[y_1, \dots, y_{rn+1}]^\tr \in \Lie(G_n)(\CC_{\infty})$ and $\bsalpha=[\alpha_1, \dots, \alpha_{rn+1}]^{\tr}\in G_n(\oK)$ so that $\Exp_{G_{n}}(\bsy)=\bsalpha$. Let $\cG_{\bsy}(t)=[g_1, \dots, g_{rn+1}]^{\tr}\in \TT^{rn+1}$ be the  Anderson generating function of $G_n$ at $\bsy$. 
Recall the matrices given in \eqref{E:matrices} and \eqref{E:matricesA}. Using Lemma \ref{L:AGFNP21}(ii), we obtain
\begin{equation}\label{E:AGFQuasi}
(t\Id_{rn+1}-\rd_{\phi_{n}}(t))\cG_{\bsy}(t) + \bsalpha = E\cG_{\bsy}^{(1)}(t)
\end{equation}
where $E\in \Mat_{rn+1}(K)$ is defined as in \eqref{E:matricesA}. Our next aim is to define the vectors $\bg^{\tens}_{\bsy}$ and $\bh^{\tens}_{\bsalpha}$ to be able to construct a certain $t$-motive. 

Set 
\begin{equation}\label{E:gforG}
\bg_{\bsy}^{\tens}:=-[g_1^{(1)},\dots,g_r^{(1)}]V\in \Mat_{1 \times r}(\TT),
\end{equation}
where  $V\in \GL_{r}(\oK)$ is defined in \eqref{E:UpsilonNV}.
Since the last $r$ rows of $N\in \Mat_{rn+1}(\mathbb{F}_q)$ given as in \eqref{E:matrices} contain only zeros, by using \eqref{E:AGFQuasi} we see that
\[
\bg^{\tens}_{\bsy}= \begin{bmatrix}
-(t-\theta)g_{rn+1} - \alpha_{rn+1} \\
-\sum_{i=2}^ra_i^{(-1)}\left((t-\theta)g_{r(n-1)+i}+\alpha_{r(n-1)+i}\right)\\
-\sum_{i=2}^{r-1} a_{i+1}^{(-2)}\left((t-\theta)g_{r(n-1)+i}+\alpha_{r(n-1)+i}\right)\\
\vdots \\
-\sum_{i=2}^{3} a_{i+(r-3)}^{(-r+2)}\left((t-\theta)g_{r(n-1)+i}+\alpha_{r(n-1)+i}\right)\\
-a_r^{(-r+1)}\left((t-\theta)g_{r(n-1)+2}+\alpha_{r(n-1)+2}\right)
\end{bmatrix}^{\tr} \in \Mat_{1 \times r}(\TT).
\]
If we set
\[
\bh_{\bsalpha}^{\tens}:= \begin{bmatrix}
\sum_{j=0}^{n} (t-\theta)^{n-j}\alpha_{rj+1} \\
\sum_{i=2}^r\sum_{j=1}^{n}  a_i^{(-1)}(t-\theta)^{n-j}\alpha_{r(j-1)+i}\\
\sum_{i=2}^{r-1}\sum_{j=1}^{n} a_{i+1}^{(-2)}(t-\theta)^{n-j}\alpha_{r(j-1)+i} \\
\vdots \\
\sum_{i=2}^3\sum_{j=1}^{n} a_{i+(r-3)}^{(-r+2)}(t-\theta)^{n-j}\alpha_{r(j-1)+i}\\
\sum_{j=1}^{n} a_r^{(-r+1)}(t-\theta)^{n-j}\alpha_{r(j-1)+2}
\end{bmatrix}^{\tr}\in \Mat_{1 \times r}(\oK[t])
\]
then by using the definition of $\bg_{\bsy}^{\tens}$ and \eqref{E:AGFQuasi}, we have  
\begin{equation}\label{E:RATg}
(\bg_{\bsy}^{\tens})^{(-1)}\Phi_n^{\tens}-\bg^{\tens}_{\bsy} = \bh^{\tens}_{\bsalpha}.
\end{equation}
It follows easily from Lemma \ref{L:AGFNP21}(i) (see also \cite[Prop. 4.2.7]{NPapanikolas21}) that
\begin{equation}\label{E:gtheta}
\bg_{\bsy}^{\tens}(\theta)= \begin{bmatrix}
y_{rn+1} - \alpha_{rn+1} \\
\sum_{i=2}^ra_i^{(-1)}\left(y_{r(n-1)+i}-\alpha_{r(n-1)+i}\right)\\
\sum_{i=2}^{r-1}a_{i+1}^{(-2)}\left(y_{r(n-1)+i}-\alpha_{r(n-1)+i}\right) \\
\sum_{i=2}^{r-2}a_{i+2}^{(-3)}\left(y_{r(n-1)+i}-\alpha_{r(n-1)+i}\right)\\
\vdots \\
\sum_{i=2}^3a_{i+(r-3)}^{(-r+2)}\left(y_{r(n-1)+i}-\alpha_{r(n-1)+i}\right)\\
a_r^{(-r+1)}(y_{r(n-1)+2}-\alpha_{r(n-1)+2})
\end{bmatrix}^{\tr}\ \in \Mat_{1 \times r}(\CC_{\infty}).
\end{equation}
Consider the $\bA$-module $\Lambda_{G_n}$ comprising the periods of $G_n$. Since
$\sH_{G_n}$ is rigid analytically trivial, by \cite[Lem. 2.4.1, Thm. 2.5.32]{HartlJuschka16},  $\Lambda_{G_n}$ is a free $\bA$-module of rank $r$. For each $1 \leq i \leq r$, set $\boldsymbol{\lambda}_i := [\lambda_{i, 1}, \dots, \lambda_{i,rn+1}]^{\tr}\in \Lie(G_n)(\CC_{\infty})$ and let $\{\boldsymbol{\lambda}_1, \dots, \boldsymbol{\lambda}_r\}$ be a fixed $\bA$-basis of $\Lambda_{G_{n}}$. For each $1\leq k \leq r$, recalling the Anderson generating function $\cG_{\boldsymbol{\lambda}_k}(t)=[g^{\tens}_{k,1},\dots,g^{\tens}_{k,rn+1}]^{\tr}$ of $G_n$ at $\bslambda_k$, we define the matrix
\[
\mathfrak{B}_n^{\tens} := \begin{bmatrix}
(g_{1,1}^{\tens})^{(1)} & \dots &(g_{1,r}^{\tens})^{(1)}\\
\vdots &  & \vdots \\
(g_{r,1}^{\tens})^{(1)} &\dots & (g_{r,r}^{\tens})^{(1)}
\end{bmatrix} V\in \Mat_{r}(\TT)
\]
where $V\in \GL_r(\oK)$ is as defined in \eqref{E:UpsilonNV}. Moreover, by \cite[Prop. 4.3.10]{NPapanikolas21}, it follows that $\mathfrak{B}_n^{\tens}\in \GL_r(\TT)$. Setting $\Pi_n^{\tens} := (\mathfrak{B}_n^{\tens})^{-1}$, we further observe, by \cite[Prop. 4.4.14]{NPapanikolas21}, that $(\Pi_n^{\tens})^{(-1)} = \Phi^{\tens}_{n}\Pi_n^{\tens}$. Thus, $\Pi_n^{\tens}$ is a rigid analytic trivialization of $\sH_n^{\tens}$ and hence, by the discussion in \cite[Sec. 4.1.6]{P08}, there exists a  matrix $D^{\tens}=(D^{\tens}_{ij})\in \GL_r(\mathbb{F}_q(t))$  regular at $t=\theta$ so that 
\[
\Pi_n^{\tens} (D^{\tens})^{-1} = \Psi_{n}^{\tens}.
\]
By a similar calculation as in \eqref{E:gtheta}, the $i$-th row of $(\Psi_n^{\tens})^{-1}(\theta)=D^{\tens}(\theta)\mathfrak{B}_n^{\tens}(\theta)$ is 
\begin{equation}\label{E:matrix2}
\begin{bmatrix}
D_{i1}^{\tens}(\theta)\lambda_{1,rn+1}+\dots+D^{\tens}_{ir}(\theta)\lambda_{r,rn+1} \\
D^{\tens}_{i1}(\theta)\sum_{j=2}^{r}a_{j}^{(-1)}\lambda_{1,r(n-1)+j}+\dots+ D^{\tens}_{ir}(\theta)\sum_{j=2}^{r}a_{j}^{(-1)}\lambda_{r,r(n-1)+j}   \\
D^{\tens}_{i1}(\theta)\sum_{j=2}^{r-1}a_{j+1}^{(-2)}\lambda_{1,r(n-1)+j}+\dots+ D^{\tens}_{ir}(\theta)\sum_{j=2}^{r-1}a_{j+1}^{(-2)}\lambda_{r,r(n-1)+j}   \\
\vdots\\
D^{\tens}_{i1}(\theta)\sum_{j=2}^{3}a_{j+(r-3)}^{(-r+2)}\lambda_{1,r(n-1)+j}+\dots+ D^{\tens}_{ir}(\theta)\sum_{j=2}^{3}a_{j+(r-3)}^{(-r+2)}\lambda_{r,r(n-1)+j}  \\
D^{\tens}_{i1}(\theta)a_{r}^{(-r+1)}\lambda_{1,r(n-1)+2}+\dots+ D^{\tens}_{ir}(\theta)a_{r}^{(-r+1)}\lambda_{r,r(n-1)+2}  
\end{bmatrix}^{\tr}.
\end{equation}
\par 

We now consider the $\CC_{\infty}[\sigma]$-module $\Mat_{1\times rn+1}(\CC_{\infty}[\sigma])$ and equip it with a $\CC_{\infty}[t]$-module structure given by 
\[
\mathfrak{c}t \cdot \mathfrak{u} :=\mathfrak{c} \mathfrak{u}\phi_n(t)^{*}, \ \ \mathfrak{c}\in \CC_{\infty}, \ \ \mathfrak{u}\in \Mat_{1\times rn+1}(\CC_{\infty}[\sigma]).
\]
Recall the element $\ell_i^{\tens}$ for each $1\leq i \leq r$ introduced in \S\ref{S:tmoduleGn} and define the map
\[
\tilde{\iota}: \Mat_{1\times r}(\CC_{\infty}[t]) \rightarrow  \Mat_{1\times rn+1}(\CC_{\infty}[\sigma])
\]
by setting for $[\mathfrak{g}_1, \dots, \mathfrak{g}_r] \in \Mat_{1\times r}(\oK[t])$,
\[
\tilde{\iota}([\mathfrak{g}_1, \dots, \mathfrak{g}_r]) := \mathfrak{g}_1 \cdot \ell_1^{\tens} + \dots+\mathfrak{g}_r \cdot \ell_r^{\tens}.
\]
We consider the $\CC_{\infty}$-linear map 
\[\tilde{\delta_0}:\Mat_{1\times rn+1}(\CC_{\infty}[\sigma])\to \Mat_{rn+1\times 1}(\CC_{\infty})
\]
given by
\[
\tilde{\delta_0}\left(\sum_{i\geq 0}B_i\sigma^i\right) := B_0^{\tr},\ \ B_i\in \Mat_{1\times rn+1}(\CC_{\infty}), \ \ B_i=0 \text{ for } i\gg0 .
\]
By using the ideas of the proof of \cite[Prop.~5.2]{G20} we have
\begin{equation}\label{E:deltaiota}
\tilde{\delta_0}\circ\tilde{\iota}([\mathfrak{g}_1, \dots, \mathfrak{g}_r]) = [\pd_t^n(\mathfrak{g}_r),\pd_t^{n-1}(\mathfrak{g}_1), \dots, \pd_t^{n-1}(\mathfrak{g}_r),\dots, \dots, \pd_t^1(\mathfrak{g}_1), \dots,  \pd_t^1(\mathfrak{g}_r), \mathfrak{g}_1, \dots, \mathfrak{g}_r ]^{\tr} |_{t=\theta}.
\end{equation}
There exists a unique extension of $\tilde{\delta_0}\circ\tilde{\iota}$ given by \[
\widehat{\tilde{\delta_0}\circ\tilde{\iota}}: \Mat_{1\times r}(\TT_{\theta}) \rightarrow \Mat_{rn+1\times 1}(\CC_{\infty}),
\]
where $\TT_{\theta}$. Recall the matrix $\rB^{\tens}$ satisfying $\Phi_n^{\tens} = (\rB^{\tens})^{(-1)}\rA^{\tens} (\rB^{\tens})^{-1}$ given in \S4 (see \eqref{E:BchangeofB}). Set $\widetilde{\bg^{\tens}_{\bsy}} := \bg^{\tens}_{\bsy}\rB^{\tens}$ and $\widetilde{\bh^{\tens}_{\bsalpha}} := \bh^{\tens}_{\bsalpha}\rB^{\tens}$. By using Lemma~\ref{L:AGFNP21}(i), \eqref{E:RATg} and \eqref{E:deltaiota}, one checks directly that
\begin{equation}\label{E:epsiotay}
\widehat{\tilde{\delta_0}\circ\tilde{\iota}}(\widetilde{\bg^{\tens}_{\bsy}}+\widetilde{\bh^{\tens}_{\bsalpha}}) = \bsy.
\end{equation}
Similarly, setting $\widetilde{\Gamma^{\tens}}:=\mathfrak{B}_n^{\tens} \rB^{\tens}\in \GL_{r}(\TT)$ and $\widetilde{\Gamma_i^{\tens}}$ to be the $i$-th row of $\widetilde{\Gamma^{\tens}}$, we have 
\begin{equation}\label{E:epsiotaomega}
\widehat{\tilde{\delta_0}\circ\tilde{\iota}}(\widetilde{\Gamma_i^{\tens}}) = -\bslambda_i.
\end{equation}
\par

\subsection{Logarithms of \texorpdfstring{$G_n$}{Gn} and the proof of Theorem \ref{T:2}}\label{SS:ProofThmiii}
	
	We now define the pre-$t$-motive $Y^{\tens}_{\bsalpha}$  so that it is of dimension $r+1$ over $\oK(t)$ and for a chosen $\oK(t)$-basis $\bsy^{\tens}_{\alpha}$ of $Y^{\tens}_{\bsalpha}$, we have $\sigma \bsy^{\tens}_{\alpha}=\Phi_{Y^{\tens}_{\bsalpha}}\bsy^{\tens}_{\alpha}$ where 
	\[
	\Phi_{Y^{\tens}_{\bsalpha}} := \begin{bmatrix}
	\Phi^{\tens}_{n} & {\bf{0}}\\
	\bh^{\tens}_{\bsalpha} & 1
	\end{bmatrix}\ \in \Mat_{r+1}(\oK[t]).
	\]
	It is easy to see by using \eqref{E:RATg} that $Y^{\tens}_{\bsalpha}$ has a rigid analytic trivialization $\Psi_{Y^{\tens}_{\bsalpha}}$ given by 
	\[
	\Psi^{\tens}_{Y_{\bsalpha}} = \begin{bmatrix}
	\Psi^{\tens}_{n} & {\bf{0}}\\
	\bg^{\tens}_{\bsalpha}\Psi_{n}^{\tens} & 1
	\end{bmatrix}  \in \GL_{r+1}(\TT).
	\]
	Using the same argument in the proof of Lemma \ref{L:Yalpha2}, one can prove the following lemma. 
	\begin{lemma}\label{L:Yalpha}
		The pre-$t$-motive $Y_{\bsalpha}^{\tens}$ is  a $t$-motive. Moreover, $Y_{\bsalpha}^{\tens}$ represents a class in $\Ext_{\sT}(\mathbf{1}_{\sP}, H_n^{\tens})$.
	\end{lemma}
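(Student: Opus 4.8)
The plan is to carry over the proof of Lemma~\ref{L:Yalpha2} essentially verbatim, replacing $\sH_n$ by $\sH_n^{\tens}=\sH_{G_n}$, $\Phi_n$ by $\Phi_n^{\tens}$, and $\Phi_{Y_{\bsalpha}}$ by $\Phi_{Y_{\bsalpha}^{\tens}}$ throughout. First I would form the free $\oK[t]$-module $\cN:=\bigoplus_{i=1}^{r+1}\oK[t]\bse_i$ on a basis $\bse=(\bse_1,\dots,\bse_{r+1})^{\tr}$ and endow it with the left $\oK[t,\sigma]$-module structure $\sigma\bse:=(t-\theta)\Phi_{Y_{\bsalpha}^{\tens}}\bse$. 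Writing $\sC:=\sC_1$ for the $\bA$-finite dual $t$-motive of the Carlitz module (so that $\sigma f=(t-\theta)f^{(-1)}$ on $\sC$), the block-lower-triangular shape of $\Phi_{Y_{\bsalpha}^{\tens}}$ — with diagonal blocks $\Phi_n^{\tens}$ and $1$ and lower-left block $\bh_{\bsalpha}^{\tens}$ — shows that the $\oK[t,\sigma]$-submodule of $\cN$ generated by $\bse_1,\dots,\bse_r$ is isomorphic to $\sH_n^{\tens}\otimes_{\oK[t]}\sC$, with quotient $\sC$, whence a short exact sequence of left $\oK[t,\sigma]$-modules
\[
0\to \sH_n^{\tens}\otimes_{\oK[t]}\sC\to \cN\to \sC\to 0.
\]

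Next I would verify that $\cN$ is an $\bA$-finite dual $t$-motive. It is free and finitely generated over $\oK[t]$ by construction; since $\sC$ and $\sH_n^{\tens}\otimes_{\oK[t]}\sC$ are finitely generated (indeed free) over $\oK[\sigma]$, \cite[Prop.~4.3.2]{ABP04} applied to the displayed sequence shows that $\cN$ is free and finitely generated over $\oK[\sigma]$ as well. For the determinant condition, $\sH_n^{\tens}\otimes_{\oK[t]}\sC$ being an $\bA$-finite dual $t$-motive gives $(t-\theta)^u(\sH_n^{\tens}\otimes_{\oK[t]}\sC)\subseteq\sigma(\sH_n^{\tens}\otimes_{\oK[t]}\sC)$ for some $u\in\ZZ_{\geq1}$, while $(t-\theta)\sC=\sigma\sC$; combining these through the exact sequence yields $(t-\theta)^v\cN\subseteq\sigma\cN$ for some $v\in\ZZ_{\geq1}$, so the matrix representing $\sigma$ on $\cN$ has determinant of the form $c(t-\theta)^{N}$. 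Moreover $\cN$ is rigid analytically trivial, a trivialization being assembled from $\Psi_{Y^{\tens}_{\bsalpha}}$ and $\Omega$. Since $\cN\otimes_{\oK[t]}\oK(t)\cong Y_{\bsalpha}^{\tens}\otimes_{\oK(t)}\sC$ as left $\oK(t)[\sigma,\sigma^{-1}]$-modules, $Y_{\bsalpha}^{\tens}\otimes_{\oK(t)}\sC$ is a $t$-motive; as $\sC$ is invertible in $\sT$, the discussion in \cite[Sec.~3.4.10]{P08} then gives that $Y_{\bsalpha}^{\tens}$ is a $t$-motive.

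Finally, the same block shape of $\Phi_{Y_{\bsalpha}^{\tens}}$ displays $Y_{\bsalpha}^{\tens}$ as a short exact sequence $0\to H_n^{\tens}\to Y_{\bsalpha}^{\tens}\to\mathbf{1}_{\sP}\to 0$ of pre-$t$-motives whose outer terms lie in $\sT$; hence it is an extension inside $\sT$, and $Y_{\bsalpha}^{\tens}$ represents a class in $\Ext_{\sT}^{1}(\mathbf{1}_{\sP},H_n^{\tens})$. The only step requiring genuine care — exactly as in Lemma~\ref{L:Yalpha2} — is the propagation of $\oK[\sigma]$-finiteness and of the $\bA$-finiteness (determinant) condition across the short exact sequence; the remainder is formal, and nothing about $G_n$ beyond the fact that $H_n^{\tens}$ is already a $t$-motive enters the argument.
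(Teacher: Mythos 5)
Your proposal is correct and follows essentially the same route as the paper, which proves this lemma precisely by invoking the argument of Lemma~\ref{L:Yalpha2} with $\sH_n^{\tens}$, $\Phi_n^{\tens}$, and $\bh_{\bsalpha}^{\tens}$ in place of $\sH_n$, $\Phi_n$, and $\bh_{\bsalpha}$ — the short exact sequence $0\to \sH_n^{\tens}\otimes_{\oK[t]}\sC\to\cN\to\sC\to 0$, the appeal to \cite[Prop.~4.3.2]{ABP04}, the $(t-\theta)^v\cN\subseteq\sigma\cN$ estimate, and the identification $\cN\otimes_{\oK[t]}\oK(t)\cong Y_{\bsalpha}^{\tens}\otimes_{\oK(t)}\sC$ are exactly the steps used there. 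The final observation that the block form of $\Phi_{Y_{\bsalpha}^{\tens}}$ exhibits $Y_{\bsalpha}^{\tens}$ as an extension of $\mathbf{1}_{\sP}$ by $H_n^{\tens}$ in $\sT$ also matches how the paper obtains the $\Ext^1_{\sT}$-class statement.
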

	
	Let $K_n^{\tens}$ to be the fraction field of $\End(G_n)$. We say that  the elements $\bsz_1,\dots,\bsz_k\in \Lie(G_n)(\CC_{\infty})$ are \textit{linearly independent over $K_n^{\tens}$} if whenever $\rd \rP_1\bsz_1+\dots+\rd \rP_k\bsz_k=0$ for some $\rP_1,\dots,\rP_k\in K_n^{\tens}$, we have $\rP_1=\dots=\rP_k=0$. Recall from \S3.1 that $\bs=[\rL_{\phi}:\mathbb{F}_q(t)]$ where $\phi$ is the Drindel $\bA$-module given as in \eqref{E:DrinfeldDef}.  We further suppose that $\{\bslambda_1, \dots, \bslambda_{r/\bs}\}$ is the maximal $K_n^{\tens}$-linearly independent subset of $\{\bslambda_1, \dots, \bslambda_{r}\}$.
	
	Before introducing our next result, following the ideas in \S\ref{SS:tmotiveconsextn}, we note that one can define a $\rL_n^{\tens}$-vector space structure on $ \Ext_\sT^1(\mathbf{1}_{\sT},H_n^{\tens})$ and for each $e\in \rL_n^{\tens}$ and a class $Y\in  \Ext_\sT^1(\mathbf{1}_{\sT},H_n^{\tens})$, we set $e_{*}Y:=e\cdot Y$. 
	
	Repeating the same argument in the proof of Theorem \ref{T:Trivial2} as well as using \eqref{E:epsiotay} and \eqref{E:epsiotaomega} one can obtain the following result.
	\begin{theorem}\label{T:Trivial}
		For each $1 \leq \ell \leq m$, let $\bsy_{\ell} \in \Lie(G_n)(\CC_{\infty})$ be such that $\Exp_{G_{n}}(\bsy_{\ell})=\bsalpha_{\ell}\in G_n(\oK)$. Suppose that  $\bslambda_1, \dots, \bslambda_{r/\bs}, \bsy_1, \dots, \bsy_m$ are linearly independent over $K_n^{\tens}$. We further set $Y^{\tens}_{\ell}:=Y^{\tens}_{\bsalpha_{\ell}}$. Then, for any choice of endomorphisms $e_1, \dots, e_m \in \rL_n^{\tens}$ not all zero, the equivalence class $ \mathcal{S}:=e_{1*}Y^{\tens}_{1} + \dots + e_{m*}Y^{\tens}_{m}\in \Ext_\sT^1(\mathbf{1}_{\sP},H_n^{\tens})$ is non-trivial.
	\end{theorem}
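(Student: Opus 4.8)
The plan is to transcribe the proof of Theorem~\ref{T:Trivial2} almost verbatim, replacing each object attached to $\mathcal{E}_n$ by the corresponding object attached to $G_n$ built in \S\ref{SS:gahafortens}. Assume for contradiction that $\mathcal{S}$ is trivial in $\Ext_\sT^1(\mathbf{1}_\sP, H_n^{\tens})$, and let $E_\ell\in\Mat_r(\oK(t))$ be the matrix with $[e_\ell(h_1^{\tens}),\dots,e_\ell(h_r^{\tens})]^{\tr} = E_\ell\bsh_{G_n}$. For a suitable $\oK(t)$-basis of $\mathcal{S}$ the multiplication by $\sigma$ and a rigid analytic trivialization are
\[
\Phi_\mathcal{S} = \begin{bmatrix} \Phi_n^{\tens} & \mathbf{0}\\ \sum_{\ell=1}^m \bh^{\tens}_{\bsalpha_\ell} E_\ell & 1\end{bmatrix}\in\GL_{r+1}(\oK(t)),\qquad
\Psi_\mathcal{S} = \begin{bmatrix} \Psi_n^{\tens} & \mathbf{0}\\ \sum_{\ell=1}^m \bg^{\tens}_{\bsy_\ell} E_\ell \Psi_n^{\tens} & 1\end{bmatrix}\in\GL_{r+1}(\TT).
\]
Triviality produces $\bsgamma = \begin{bsmallmatrix}\Id_{r} & \mathbf{0}\\ \gamma_1 \dots \gamma_r & 1\end{bsmallmatrix}\in\GL_{r+1}(\oK(t))$ with $\bsgamma^{(-1)}\Phi_\mathcal{S} = (\Phi_n^{\tens} \oplus (1))\bsgamma$; as in \cite[p.~146]{P08} the denominators of the $\gamma_i$ lie in $\bA$ and hence are regular at $t=\theta$. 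Right-multiplying by $\Psi_\mathcal{S}$ and using \cite[\S4.1.6]{P08} gives $\bsgamma\Psi_\mathcal{S} = (\Psi_n^{\tens} \oplus (1))\bsdelta$ for some $\bsdelta = \begin{bsmallmatrix}\Id_{r} & \mathbf{0}\\ \delta_1 \dots \delta_r & 1\end{bsmallmatrix}\in\GL_{r+1}(\FF_q(t))$. Comparing last rows of the two identities, adding them, and right-multiplying by $\rB^{\tens}$ yields
\[
[\gamma_1,\dots,\gamma_r]^{(-1)}\Phi_n^{\tens}\rB^{\tens} + \sum_{\ell=1}^m \bigl(\widetilde{\bg^{\tens}_\ell}+\widetilde{\bh^{\tens}_\ell}\bigr)(\rB^{\tens})^{-1}E_\ell\rB^{\tens} = [\delta_1,\dots,\delta_r]D^{\tens}\widetilde{\Gamma^{\tens}},
\]
where $\widetilde{\bg^{\tens}_\ell}:=\bg^{\tens}_{\bsy_\ell}\rB^{\tens}$ and $\widetilde{\bh^{\tens}_\ell}:=\bh^{\tens}_{\bsalpha_\ell}\rB^{\tens}$.

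Next I would apply $\widehat{\tilde{\delta_0}\circ\tilde{\iota}}$ to both sides. The first summand vanishes as in \eqref{E:eps12}. For each $\ell$, writing $\bQ_\ell:=(\rB^{\tens})^{-1}E_\ell\rB^{\tens}$ with last column $\rS_\ell$, which by \cite[Prop.~4.1.1(b)]{CP12} specializes at $t=\theta$ to a vector supported only in its last coordinate, the combination of \eqref{E:deltaiota}, the block form \eqref{E:dEnd1} of $\rd\rP^{\tens}$, and \eqref{E:W} gives
\[
\widehat{\tilde{\delta_0}\circ\tilde{\iota}}\bigl((\widetilde{\bg^{\tens}_\ell}+\widetilde{\bh^{\tens}_\ell})\bQ_\ell\bigr) = \rd\rP^{\tens}_\ell\,\widehat{\tilde{\delta_0}\circ\tilde{\iota}}\bigl(\widetilde{\bg^{\tens}_\ell}+\widetilde{\bh^{\tens}_\ell}\bigr) = \rd\rP^{\tens}_\ell\,\bsy_\ell,
\]
using \eqref{E:epsiotay}, where $\rP^{\tens}_\ell\in\End(G_n)$ corresponds to $e_\ell$ via Proposition~\ref{P:fullyfaithful}. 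Writing $[\delta_1,\dots,\delta_r]D^{\tens}\widetilde{\Gamma^{\tens}} = \sum_{i=1}^r\widetilde{\Gamma_i^{\tens}}\Delta_i$ with $\widetilde{\Gamma_i^{\tens}}$ the $i$-th row of $\widetilde{\Gamma^{\tens}}$ and each $\Delta_i$ a scalar matrix in $\Mat_r(\FF_q(t))$ representing some $f_i\in\rL_n^{\tens}$, the same bookkeeping together with \eqref{E:epsiotaomega} gives $\widehat{\tilde{\delta_0}\circ\tilde{\iota}}\bigl([\delta_1,\dots,\delta_r]D^{\tens}\widetilde{\Gamma^{\tens}}\bigr) = -\rd\rP^{\tens}_{f_1}\bslambda_1-\dots-\rd\rP^{\tens}_{f_r}\bslambda_r$. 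Therefore
\[
\sum_{\ell=1}^m \rd\rP^{\tens}_\ell\,\bsy_\ell = -\rd\rP^{\tens}_{f_1}\bslambda_1-\dots-\rd\rP^{\tens}_{f_r}\bslambda_r.
\]
Since the $e_\ell$ are not all zero, some $\rP^{\tens}_\ell\neq 0$; since $D^{\tens}\in\GL_r(\FF_q(t))$, some $\rP^{\tens}_{f_i}\neq 0$. Rewriting the right-hand side through the maximal $K_n^{\tens}$-independent subset $\{\bslambda_1,\dots,\bslambda_{r/\bs}\}$ (the remaining $\bslambda_j$ being $\End(G_n)$-combinations of these) exhibits a nontrivial $K_n^{\tens}$-linear relation among $\bslambda_1,\dots,\bslambda_{r/\bs},\bsy_1,\dots,\bsy_m$, contradicting the hypothesis; hence $\mathcal{S}$ is nontrivial.

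Almost all of this is formal transcription; the two non-formal inputs — the $G_n$-analog of \cite[Prop.~4.1.1]{CP12} and the shape of $\rd\rP^{\tens}_\ell$ in \eqref{E:dEnd1}, \eqref{E:W} — are already established in \S\ref{S:EndGn}. I expect the main obstacle to be the verification of the factorization $\widehat{\tilde{\delta_0}\circ\tilde{\iota}}\bigl((\widetilde{\bg^{\tens}_\ell}+\widetilde{\bh^{\tens}_\ell})\bQ_\ell\bigr) = \rd\rP^{\tens}_\ell\,\widehat{\tilde{\delta_0}\circ\tilde{\iota}}(\widetilde{\bg^{\tens}_\ell}+\widetilde{\bh^{\tens}_\ell})$, the $G_n$-counterpart of \eqref{E:linearrely}: one must check that multiplying the block-upper-triangular matrix of \eqref{E:dEnd1} into the column of hyperderivatives $\pd_t^k(\widetilde{\bg^{\tens}_\ell}+\widetilde{\bh^{\tens}_\ell})^{\tr}$ supplied by \eqref{E:deltaiota} reproduces $\widehat{\tilde{\delta_0}\circ\tilde{\iota}}$ applied to $(\widetilde{\bg^{\tens}_\ell}+\widetilde{\bh^{\tens}_\ell})\bQ_\ell$, i.e.\ that the index shifts of the hyperdifferential operators line up on the two sides. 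Once that is confirmed, the argument closes exactly as in Theorem~\ref{T:Trivial2}.
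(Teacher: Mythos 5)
Your proposal is correct and is exactly the paper's intended route: the paper's own proof of this theorem is literally "repeat the argument of Theorem~\ref{T:Trivial2} using \eqref{E:epsiotay} and \eqref{E:epsiotaomega}," and your transcription supplies the right $G_n$-analogues at every step (the $\rB^{\tens}$-conjugation, the specialization of the last column $\rS_\ell$ via \cite[Prop.~4.1.1]{CP12} as encoded in \eqref{E:W}, and the factorization through $\rd\rP^{\tens}_\ell$ from \eqref{E:dEnd1} and \eqref{E:deltaiota}). The hyperderivative bookkeeping you flag is indeed the only computation to check, and it goes through exactly as in the $\mathcal{E}_n$ case.
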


	We are now ready to construct a $t$-motive $\textbf{X}^{\tens}$ by modifying the construction of $\textbf{X}$ in \S\ref{S:Proof1}. Let $\bsy_{1},\dots,\bsy_m,\bsalpha_1,\dots,\bsalpha_m$ be as in Theorem \ref{T:Trivial} and for $1 \leq \ell \leq m$, set $\bg_{\ell}^{\tens}:=\bg^{\tens}_{\bsy_\ell}$ and $\bh_{\ell}^{\tens}:=\bh^{\tens}_{\bsalpha_{\ell}}$. Thus, the block diagonal matrix $\Phi^{\tens} :=\oplus_{\ell=1}^m \Phi^{\tens}_{Y_\ell}$ represents the multiplication by $\sigma$ on a chosen $\oK(t)$-basis of the $t$-motive $Y^{\tens} :=\oplus_{\ell=1}^m Y^{\tens}_{\ell}$, the direct sum of $t$-motives $Y^{\tens}_{1},\dots,Y^{\tens}_m$. It has a rigid analytic trivialization given by $\Psi^{\tens}:= \oplus_{\ell=1}^m\Psi^{\tens}_{Y_{\ell}}$. \par
	
	We define the $t$-motive $\textbf{X}^{\tens}$ on which the multiplication by $\sigma$ on a $\oK(t)$-basis is given by $\Phi_{\textbf{X}^{\tens}} \in \GL_{rm+1}(\oK(t))$ along with a rigid analytic trivialization $\Psi_{\textbf{X}^{\tens}} \in \GL_{rm+1}(\TT)$ where
	\begin{equation*}\label{E:PhiPsiN2}
	\Phi_{\textbf{X}^{\tens}} := \begin{bmatrix} \Phi^{\tens}_n && & \\ 
	&\ddots &&\\
	& &\Phi^{\tens}_n &\\ 
	\bh^{\tens}_{1} & \dots& \bh^{\tens}_{m} & 1 \end{bmatrix}, \quad \text{and} \quad \Psi_{\textbf{X}^{\tens}} := \begin{bmatrix} \Psi_n^{\tens} & && \\
	& \ddots &&\\
	& &\Psi_n^{\tens} &\\ 
	\bg^{\tens}_{1}\Psi_n^{\tens} & \dots& \bg^{\tens}_{m}\Psi^{\tens}_n & 1 \end{bmatrix}.
	\end{equation*}
	Observe that $\textbf{X}^{\tens}$ is an extension of $\mathbf{1}_{\sP}$ by $(H_n^{\tens})^m$. Moreover, $\textbf{X}^{\tens}$ is a pullback of the surjective map $Y^{\tens} \twoheadrightarrow \mathbf{1}_{\sP}^m$ and the diagonal map $\mathbf{1}_{\sP} \rightarrow \mathbf{1}_{\sP}^m$. Thus, the two $t$-motives $Y^{\tens}$ and $\textbf{X}^{\tens}$ generate the same Tannakian subcategory of $\sT$ and hence, the Galois groups $\Gamma_{Y^{\tens}}$ and $\Gamma_{\textbf{X}^{\tens}}$ are isomorphic. For any $\FF_q(t)$-algebra $\rR$, an element of $\Gamma_{\textbf{X}^{\tens}}(\rR)$ is of the form 
	\[
	\nu = \begin{bmatrix} \mu &&& \\ 
	& \ddots &&\\
	& & \mu &\\ 
	\bv_{1} &\dots &\bv_m & 1\end{bmatrix},
	\]
	where $\mu \in \Gamma_{H_n^{\tens}}(\rR)$ and each $\bv_1, \dots, \bv_m \in \GG_a^{r}$. Since $(H_n^{\tens})^m$ is a sub-$t$-motive of $\textbf{X}^{\tens}$, we have the following short exact sequence of affine group schemes over $\FF_q(t)$,
	\begin{equation}\label{E:SESlog}
	1 \rightarrow W^{\tens} \rightarrow \Gamma_{\textbf{X}^{\tens}} \xrightarrow{\pi} \Gamma_{H_n^{\tens}} \rightarrow 1,
	\end{equation}
	where $\pi^{(\rR)}:\Gamma_{\textbf{X}^{\tens}}(\rR)\rightarrow \Gamma_{H_n^{\tens}}(\rR)$ is the map $\nu \mapsto \mu$ (cf.~\cite[p. 138]{CP12}). It can be checked directly via conjugation that \eqref{E:SESlog} gives an action of any $\mu \in \Gamma_{H_n^{\tens}}(\rR)$ on 
	\[
	\bv = \begin{bmatrix}
	\Id_{r} && &\\ 
	& \ddots &&\\
	&&\Id_{r} & \\ 
	\bu_1 & \dots & \bu_m & 1 
	\end{bmatrix} \in W^{\tens}(\rR)
	\]
	defined by 
	\begin{equation}\label{E:Actionlog}
	\nu \bv \nu^{-1}=\begin{bmatrix}
	\Id_{r} && &\\ 
	&\ddots&&\\
	&&\Id_{r} & \\ 
	\bu_1\mu^{-1} & \dots & \bu_m \mu^{-1}  & 1\end{bmatrix}.
	\end{equation}
Let $\bsx^{\tens} \in \Mat_{rm+1\times 1}(\textbf{X}^{\tens})$  be the $\oK(t)$-basis of $\textbf{X}^{\tens}$ satisfying $\sigma \bsx^{\tens} = \Phi_{\textbf{X}^{\tens}} \bsx^{\tens}$. Letting  $\bsx^{\tens} =  [\bsx_1^{\tens}, \dots, \bsx_m^{\tens}, y^{\tens}]^\tr \in \Mat_{rm+1\times 1}(\textbf{X})$ so that each $\bsx_i^{\tens}\in H_n^{\tens}$, we see that $[\bsx_1^{\tens},\dots,\bsx_m^{\tens}]^{\tr}$ forms a $\oK(t)$-basis of  $(H_n^{\tens})^m$. On the other hand, by the discussion in \S\ref{SS:t-motives}, we see that the entries of $\Psi_{\textbf{X}^{\tens}}^{-1}\bsx^{\tens}$ form an $\FF_q(t)$-basis of $(\textbf{X}^{\tens})^{\text{Betti}}$ and hence the entries of $\bu^{\tens} := [(\Psi_n^{\tens})^{-1} \bsx_1^{\tens}, \dots, (\Psi_n^{\tens})^{-1} \bsx_m^{\tens}]^\tr$ form an $\FF_q(t)$-basis of $((H_n^{\tens})^{\text{Betti}})^m$.

    The action of $\Gamma_{H_n^{\tens}}(\rR)$ on $\rR\otimes_{\FF_q(t)}((H_n^{\tens})^{\text{Betti}})^m$ can be described as follows (see also \cite[\S4.5]{P08}): For any $\mu \in \Gamma_{H_n^{\tens}}(\rR)$ and any $\bv_\ell \in \Mat_{1\times r}(\rR)$ where $1 \leq \ell \leq m$, the action of $\mu^{\tens}$ on $(\bv_1, \dots, \bv_m) \cdot \bu^{\tens} \in \rR \otimes_{\FF_q(t)} ((H_n^{\tens})^{\text{Betti}})^m$ can be represented by
	\begin{equation*}
	(\bv_1, \dots, \bv_m) \cdot \bu^{\tens} \mapsto (\bv_1 \mu^{-1}, \dots,  \bv_m \mu^{-1}) \cdot \bu^{\tens}.
	\end{equation*}
	
	\noindent Thus, by \eqref{E:Actionlog} the action of $\Gamma_{H_n^{\tens}}$ on $((H_n^{\tens})^{\text{Betti}})^m$ is compatible with the action of $\Gamma_{H_n^{\tens}}$ on $W^{\tens}$.  \par

	The following statements can be checked easily:
	\begin{itemize}
		\item[(i)] For each $n\in \ZZ_{\geq 0}$, recall $\rL_{n}^{\tens}=\End_{\sT}(H_n^{\tens})$ and note that $\rL_{n}^{\tens}$ can be embedded into $\End((H_n^{\tens})^{\text{Betti}})$. Hence, we can regard $(H_n^{\tens})^{\text{Betti}}$ to be a $\rL_{n}^{\tens}$-vector space and denote it by $\mathcal{V}''$.
		\item[(ii)] Let $G''$ be the $\rL_{n}^{\tens}$-group $\GL(\mathcal{V}'')$. By \cite[Cor.~3.5.6]{CP12} and Theorem~\ref{T:equalitygamma}, we have $\Gamma_{H_n^{\tens}}=\Res_{\rL_{n}^{\tens}/\FF_q(t)}(G'')$, where $\Res_{\rL_{n}^{\tens}/\FF_q(t)}$ is the Weil restriction of scalars over $\FF_q(t)$.
		\item[(iii)] By Theorem~\ref{T:Pap}, $\Gamma_{\textbf{X}^{\tens}}$ is $\FF_q(t)$-smooth.
		\item[(iv)] Set $\bar{V}'$ to be the vector group over $\FF_q(t)$ associated to the underlying $\FF_q(t)$-vector space $(\mathcal{V}'')^{\oplus m}$. Then $W^{\tens}$ is equal to the scheme-theoretic intersection $\Gamma_{\textbf{X}^{\tens}}\cap \bar{V}'$.
	\end{itemize}
	
	Since the map $\pi:\Gamma_{\textbf{X}^{\tens}}\rightarrow \Gamma_{H_n^{\tens}}$ is surjective, using the aforementioned facts, \cite[Ex. A.1.2]{CP12}, and \cite[Prop. A.1.3]{CP12}, we conclude the following.

	\begin{lemma}\label{L:smoothh} The affine group scheme $W^{\tens}$ over $\FF_q(t)$ is $\FF_q(t)$-smooth.
	\end{lemma}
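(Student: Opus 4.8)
The plan is to run the argument of Lemma~\ref{L:smooth} verbatim in the tensor setting, replacing $\textbf{X}$, $H_n$, $W$, $\bar{V}$, $\mathcal{V}'$, $G'$, $\rL_n$ by their superscript-$\tens$ analogues $\textbf{X}^{\tens}$, $H_n^{\tens}$, $W^{\tens}$, $\bar{V}'$, $\mathcal{V}''$, $G''$, $\rL_n^{\tens}$. First I would recall the short exact sequence of affine group schemes over $\FF_q(t)$ from \eqref{E:SESlog}, namely $1 \rightarrow W^{\tens} \rightarrow \Gamma_{\textbf{X}^{\tens}} \xrightarrow{\pi} \Gamma_{H_n^{\tens}} \rightarrow 1$, which identifies $W^{\tens}$ with the scheme-theoretic kernel of $\pi$ and, by item (iv) in the list preceding the lemma, with the scheme-theoretic intersection $\Gamma_{\textbf{X}^{\tens}}\cap \bar{V}'$ inside $\GL_{rm+1/\FF_q(t)}$, where $\bar{V}'$ is the vector group attached to $(\mathcal{V}'')^{\oplus m}$.

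Next I would invoke the structural facts already assembled just before the statement: by item (iii), $\Gamma_{\textbf{X}^{\tens}}$ is $\FF_q(t)$-smooth (this is Theorem~\ref{T:Pap}); by item (ii), $\Gamma_{H_n^{\tens}} = \Res_{\rL_n^{\tens}/\FF_q(t)}(G'')$ with $G'' = \GL(\mathcal{V}'')$ over the field $\rL_n^{\tens}$; and the $\Gamma_{H_n^{\tens}}$-action on $((H_n^{\tens})^{\text{Betti}})^m$ is, by \eqref{E:Actionlog}, compatible with the conjugation action of $\Gamma_{\textbf{X}^{\tens}}$ on $W^{\tens}$. Since $\pi$ is surjective and $\Gamma_{\textbf{X}^{\tens}}$ is smooth, one is then exactly in the situation of \cite[Ex.~A.1.2, Prop.~A.1.3]{CP12}: the relevant $\Gamma_{H_n^{\tens}}$-module $(\mathcal{V}'')^{\oplus m}$ is a direct sum of copies of the standard module of a Weil restriction of a general linear group, hence has vanishing first (Hochschild) cohomology, and consequently the scheme-theoretic kernel $W^{\tens}$ of $\pi$ is a smooth subgroup scheme of the vector group $\bar{V}'$. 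This gives the claim; in the write-up I would simply say that the proof is identical to that of Lemma~\ref{L:smooth}, using the facts (i)--(iv) above in place of those preceding Lemma~\ref{L:smooth}.

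The only place where any genuine checking is needed — and the part I expect to be the main obstacle — is verifying that the hypotheses of \cite[Prop.~A.1.3]{CP12} really apply, i.e. that $(\mathcal{V}'')^{\oplus m}$ is an induced comodule for $\Gamma_{H_n^{\tens}} = \Res_{\rL_n^{\tens}/\FF_q(t)}(\GL(\mathcal{V}''))$ in the sense making $H^1$ vanish. This is where Theorem~\ref{T:equalitygamma} is essential, since it pins down $\Gamma_{H_n^{\tens}}$ as the full centralizer $\Cent_{\GL_r/\FF_q(t)}(\rL_n^{\tens})$, equivalently the Weil restriction of a general linear group over $\rL_n^{\tens}$ — precisely the role played by Theorem~\ref{T:equalitygamma2} in the proof of Lemma~\ref{L:smooth}. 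Everything else is a transcription of the $\mathcal{E}_n$-case.
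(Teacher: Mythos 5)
Your proposal is correct and follows exactly the paper's argument: the paper likewise deduces $\FF_q(t)$-smoothness of $W^{\tens}$ from the surjectivity of $\pi$ in \eqref{E:SESlog}, the facts (i)--(iv) preceding the lemma (smoothness of $\Gamma_{\textbf{X}^{\tens}}$ via Theorem~\ref{T:Pap}, the identification $\Gamma_{H_n^{\tens}}=\Res_{\rL_n^{\tens}/\FF_q(t)}(G'')$ from Theorem~\ref{T:equalitygamma}, and $W^{\tens}=\Gamma_{\textbf{X}^{\tens}}\cap\bar V'$), together with \cite[Ex.~A.1.2, Prop.~A.1.3]{CP12}, exactly as in Lemma~\ref{L:smooth}. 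No further comment is needed.
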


	Since $W^{\tens}$ is $\FF_q(t)$-smooth, we can regard $((H_n^{\tens})^\text{Betti})^m$ as a vector group over $\FF_q(t)$. Moreover, $W^{\tens}$ is a $\Gamma_{H_n^{\tens}}$-submodule of $((H_n^{\tens})^\text{Betti})^m$. Thus, by the equivalence of categories $\sT_{H_n^{\tens}} \approx \Rep(\Gamma_{H_n^{\tens}}, \FF_q(t))$, there exists a sub-$t$-motive $U^{\tens}$ of $(H_n^{\tens})^m$ such that 
	\[
	W^{\tens} \cong (U^{\tens})^{\text{Betti}}.
	\]

 Recall our fixed $\bA$-basis $\{\bslambda_1,\dots,\bslambda_r\}$ of $\Lambda_{G_n}$ where $\bslambda_i = [\lambda_{i, 1}, \dots, \lambda_{i,rn+1}]^{\tr}\in \Lie(G_n)(\CC_{\infty})$ for each $1 \leq i \leq r$ and its maximal $K_n$-linearly independent subset $\{\bslambda_1, \dots, \bslambda_{r/\bs}\}$.
 
	Now one can see that we have achieved all the tools to obtain the following result. In particular, using the same idea as in the proof of Theorem \ref{T:MaintTtr2} as well as applying Proposition \ref{P:simpletensor}, Theorem \ref{T:equalitygamma}, \eqref{E:matrix2}, and Theorem \ref{T:Trivial} to the above setting, we obtain the following.

\begin{theorem}\label{T:MaintTtr}
Let $n \in \ZZ_{\geq 1}$.	For each $1 \leq \ell \leq m$, let $\bsy_{\ell} =[y_{\ell,1}, \dots, y_{\ell,rn+1}]^{\tr} \in \Lie(G_n)(\CC_{\infty})$ be such that $\Exp_{G_{n}}(\bsy_{\ell})\in G_n(\oK)$. Suppose that the set $\{\bslambda_1, \dots, \bslambda_{r/\bs}, \bsy_1, \dots, \bsy_m\}$ is linearly independent over $K_n^{\tens}$. Let $\textbf{\textup{X}}^{\tens}$ be the $t$-motive defined as in \eqref{E:PhiPsiN}. Then, $\dim \Gamma_{\textbf{\textup{X}}^{\tens}} = r^2/\bs+rm$. In particular, we have
	\[
\trdeg_{\oK}\oK\bigg(\bigcup\limits_{\ell=1}^{m}\bigcup\limits_{i=1}^{r/\bs}\bigcup_{j=2}^{r+1}\{\lambda_{i,r(n-1)+j}, y_{\ell, r(n-1)+j}\}\bigg) = r^2/\bs+rm.
	\]
\end{theorem}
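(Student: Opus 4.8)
The plan is to transport the proof of Theorem~\ref{T:MaintTtr2} to the tensor setting essentially verbatim, replacing $\mathcal{E}_n$, $H_n$, $\Gamma_{H_n}$, $\rL_n$, $K_n$, $W$, $U$, $\textbf{X}$, $Y_\ell$ by their decorated counterparts $G_n$, $H_n^{\tens}$, $\Gamma_{H_n^{\tens}}$, $\rL_n^{\tens}$, $K_n^{\tens}$, $W^{\tens}$, $U^{\tens}$, $\textbf{X}^{\tens}$, $Y_\ell^{\tens}$, and feeding in the tensor analogues of the supporting results (Proposition~\ref{P:simpletensor}, Theorem~\ref{T:equalitygamma}, Theorem~\ref{T:Trivial}, Lemma~\ref{L:smoothh}) at each step. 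First I would combine the short exact sequence \eqref{E:SESlog}, Theorem~\ref{T:equalitygamma}, and Theorem~\ref{T:TannakianMain} applied to $\Psi_{\textbf{X}^{\tens}}$ to obtain the upper bound $\dim\Gamma_{\textbf{X}^{\tens}} = \trdeg_{\oK}\oK(\Psi_{\textbf{X}^{\tens}}(\theta)) \le r^2/\bs + rm$, so that everything reduces to proving $\dim W^{\tens} = rm$. By Lemma~\ref{L:smoothh} and the equivalence $\sT_{H_n^{\tens}}\approx\Rep(\Gamma_{H_n^{\tens}},\FF_q(t))$, there is a sub-$t$-motive $U^{\tens}$ of $(H_n^{\tens})^m$ with $W^{\tens}\cong(U^{\tens})^{\mathrm{Betti}}$, and it suffices to show $(U^{\tens})^{\mathrm{Betti}} = ((H_n^{\tens})^m)^{\mathrm{Betti}}$.

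For the core step I would first show that $\textbf{X}^{\tens}/U^{\tens}$ is trivial in $\Ext_\sT^1(\mathbf{1}_{\sP}, H_n^{\tens}/U^{\tens})$: since $W^{\tens}\cong(U^{\tens})^{\mathrm{Betti}}$, the group $\Gamma_{\textbf{X}^{\tens}}$ acts on $(\textbf{X}^{\tens})^{\mathrm{Betti}}/(U^{\tens})^{\mathrm{Betti}}$ through $\Gamma_{\textbf{X}^{\tens}}/W^{\tens}\cong\Gamma_{H_n^{\tens}} = \Gamma_{H_{\phi}}$ by \eqref{E:SESlog} and Theorem~\ref{T:equalitygamma}, and complete reducibility of $\Rep(\Gamma_{H_{\phi}},\FF_q(t))$ (\cite[Cor.~3.5.7]{CP12}) forces this extension to split. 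Then, assuming $(U^{\tens})^{\mathrm{Betti}}\subsetneq((H_n^{\tens})^m)^{\mathrm{Betti}}$, simplicity of $H_n^{\tens}$ (Proposition~\ref{P:simpletensor}) yields a nonzero $\bp\in\Hom_\sT((H_n^{\tens})^m,H_n^{\tens})$ vanishing on $U^{\tens}$; writing $\bp = (e_1,\dots,e_m)$ with $e_\ell\in\rL_n^{\tens}$ not all zero, the pushout $\bp_*\textbf{X}^{\tens} = e_{1*}Y_1^{\tens}+\dots+e_{m*}Y_m^{\tens}$ is a quotient of the trivial extension $\textbf{X}^{\tens}/U^{\tens}$, hence trivial in $\Ext_\sT^1(\mathbf{1}_{\sP},H_n^{\tens})$, contradicting Theorem~\ref{T:Trivial} (this is where the $K_n^{\tens}$-linear independence hypothesis is used). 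Thus $\dim W^{\tens} = rm$ and $\dim\Gamma_{\textbf{X}^{\tens}} = r^2/\bs+rm$.

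It then remains to identify $\oK(\Psi_{\textbf{X}^{\tens}}(\theta))$. By Theorem~\ref{T:TannakianMain} its transcendence degree over $\oK$ is $r^2/\bs+rm$. Inspecting the construction of $\Psi_{\textbf{X}^{\tens}}$: its diagonal blocks $\Psi_n^{\tens}$ contribute, via \eqref{E:matrix2}, the period coordinates $\lambda_{i,r(n-1)+j}$ ($1\le i\le r$, $2\le j\le r+1$), while the bottom-row blocks $\bg_\ell^{\tens}\Psi_n^{\tens}$ contribute, via \eqref{E:gtheta}, the logarithm coordinates $y_{\ell,r(n-1)+j}$ together with the coordinates of $\bsalpha_\ell$, which lie in $\oK$ and so add nothing. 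Finally, because the last $r$ rows of $\rd\rP^{\tens}$ for $\rP\in\End(G_n)$ have the block shape recorded in \eqref{E:dEnd1}--\eqref{E:W} -- so that the bottom $r$ coordinates of $\rd\rP^{\tens}\bsu$ depend only on the bottom $r$ coordinates of $\bsu$ -- the $K_n^{\tens}$-linear relations expressing $\bslambda_i$ for $i>r/\bs$ in terms of $\bslambda_1,\dots,\bslambda_{r/\bs}$ become $\oK$-linear relations on these bottom $r$ coordinates; hence $\oK(\Psi_{\textbf{X}^{\tens}}(\theta))$ coincides with the field generated by $\lambda_{i,r(n-1)+j}$ with $i\le r/\bs$ and $y_{\ell,r(n-1)+j}$, which gives the stated transcendence degree.

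The main obstacle I anticipate is making Theorem~\ref{T:Trivial} actually go through in this setting: one must verify that applying $\widehat{\tilde{\delta_0}\circ\tilde{\iota}}$ to the relevant rows of a hypothetical trivializing matrix, using the identities \eqref{E:epsiotay} and \eqref{E:epsiotaomega} together with the explicit block form of $\rd\rP^{\tens}$ in \eqref{E:dEnd1} and \cite[Prop.~4.1.1]{CP12}, reproduces exactly the vectors $\rd\rP_\ell\bsy_\ell$ and $-\rd\rP_{f_i}\bslambda_i$; the bookkeeping differs from the exterior-power case because the matrices $N$, $E$ in \eqref{E:matrices}, \eqref{E:matricesA} and the change-of-basis $\rB^{\tens}$ in \eqref{E:BchangeofB} have a different shape. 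A secondary point that must be handled carefully is the hypothesis $n\ge1$: both the description of the tractable coordinates as the bottom $r$ entries and the clean block form \eqref{E:dEnd1} underlying the final field identification are special to $n\ge1$, so the argument genuinely relies on that restriction.
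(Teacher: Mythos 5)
Your proposal is correct and is essentially the paper's own argument: the paper proves Theorem~\ref{T:MaintTtr} precisely by transporting the proof of Theorem~\ref{T:MaintTtr2} to the tensor setting, invoking Proposition~\ref{P:simpletensor}, Theorem~\ref{T:equalitygamma}, \eqref{E:matrix2}, and Theorem~\ref{T:Trivial} exactly as you do, with the same reduction to $\dim W^{\tens}=rm$ via complete reducibility and the pushout contradiction, and the same field identification using \eqref{E:gtheta} and the block form of $\rd\rP^{\tens}$. Your anticipated obstacle about Theorem~\ref{T:Trivial} is not an issue here, since that theorem is already established in the paper (via \eqref{E:epsiotay} and \eqref{E:epsiotaomega}) and may be cited as you do.
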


\begin{proof}[Proof of Theorem \ref{T:2}]
The proof follows via a simple application of Theorem~\ref{T:MaintTtr} by repeating the same arguments as in the proof of Theorem~\ref{T:1} for the $t$-module $G_n$. For completeness, we provide a proof. 
Let $\{\boldsymbol{\vartheta}_1, \dots, \boldsymbol{\vartheta}_{j}\}\subseteq\{\bslambda_{1},\dots, \bslambda_{r}, \bsy_{1}, \dots, \bsy_{m}\}$ be a maximal $K_n^{\tens}$-linearly independent set containing $\{\bsy_1, \dots, \bsy_m\}$ for some $r/\bs\leq j \leq r/\bs+m$. Set $\boldsymbol{\vartheta}_{u} =[\vartheta_{u, 1}, \dots, \vartheta_{u, rn+1}]^\tr$ for $1 \leq u \leq j$. Then, we obtain
	\[
	\oK \bigg(\bigcup_{i=1}^r\bigcup_{k=2}^{r+1}\bigcup_{\ell=1}^{m} \{\omega_{i,r(n-1)+k}, y_{\ell, r(n-1)+k}\}\bigg) =  \oK \bigg(\bigcup_{u=1}^{j}\{{\vartheta}_{u, r(n-1)+2}, {\vartheta}_{u, r(n-1)+3}, \dots, {\vartheta}_{u, rn}, {\vartheta}_{u, rn+1}\}\bigg)
	\]
	where the equality follows from the structure of $\rd\rP^{\tens}$ for any $\rP^{\tens}\in \End(G_n)$ given in \eqref{E:dEnd1}. By Theorem~\ref{T:MaintTtr}, the set  $\bigcup_{u=1}^{j}\{\vartheta_{u, r(n-1)+2}, \vartheta_{u, r(n-1)+3}, \dots, \vartheta_{u, rn+1}\}$ is algebraically independent over $\oK$ and hence, we obtain the desired algebraic independence of the set $\bigcup_{\ell=1}^{m} \{y_{\ell, r(n-1)+2}, y_{\ell, r(n-1)+3}, \dots, y_{\ell, rn+1}\}$ over $\oK$. Finally, the transcendence of $\widetilde{\mathcal{Y}}_n$ for $n \in \ZZ_{\geq 1}$ over $\oK$ follows immediately.
\end{proof}

\begin{proof}[Proof of Corollary \ref{C:Int1}]
 Let $\bg^{\tens}_{\bsy_1}, \dots, \bg^{\tens}_{\bsy_\ell}$ be as defined in \eqref{E:gforG}. Note that by \eqref{E:UpsilonNV}, we have $V \in \Mat_r(\oK)$. Then, by \cite[Theorem~C part (c)]{NPapanikolas21}, we have 
   \[
   \oK\left(\bigcup\limits_{\ell=1}^{m} \left\{ F_{G, \delta^{\tens}_1}(\bsy_\ell), \dots, F_{G, \delta^{\tens}_r}(\bsy_\ell)\right\}\right) =\oK\left(\bg^{\tens}_{\bsy_1}|_{t=\theta}, \dots, \bg^{\tens}_{\bsy_\ell}|_{t=\theta}\right).
   \]
   Then, as in the proof of Corollary~\ref{C:Int0}, the desired results follow by using \eqref{E:gtheta} and  Theorem~\ref{T:2}. 
    \end{proof}

\end{document}